\newtheorem{theo}{Theorem}[section]
\newtheorem{remark}{Remark}
\newtheorem{prop}{Proposition}
\newtheorem{lemma}{Lemma}
\newtheorem{definition}{Definition}
\newtheorem{corol}{Corollary}
\newcommand{\de}{{^\dagger}}
\title{Higher order error estimates for regularization of inverse problems under non-additive noise}
\author{Diana-Elena Mirciu$^1$ and Elena Resmerita$^2$\\  
\footnotesize Department of Mathematics, University of Klagenfurt, Austria}
\date{ \scriptsize $^1$E-mail: diana-elena.mirciu@aau.at\\[1ex]
        $^2$E-mail: elena.resmerita@aau.at\\[2ex]
        \normalsize \today}
\begin{document}

\maketitle

\begin{abstract}
    In this work we derive higher order error estimates for inverse problems distorted by non-additive noise, in terms of Bregman distances. The results are obtained by means of a novel source condition, inspired by the dual problem. Specifically, we focus on variational regularization having the Kullback-Leibler divergence as data-fidelity, and a convex penalty term. In this framework, we provide an interpretation of the new source condition, and present error estimates also when a variational formulation of the source condition is employed. We show that this approach  can be extended to variational regularization that incorporates  more general convex data fidelities.
\end{abstract}

\section{Introduction}
    A prototype of an inverse problem is the operator equation 
        \begin{equation} \label{IP}
        Au=f,
    \end{equation}
    where $X,Y$ are Banach spaces, $A:X\rightarrow Y$ is a bounded linear operator and $f\in Y$. In practice, the exact data $f$ are not known precisely, being instead approximated by $f^{\delta}$ with $\Vert f^{\delta}-f\Vert\leq\delta$.
    If problem \eqref{IP} is ill-posed, one needs to employ regularization methods in order to obtain stable approximations of the solution.

    Throughout this work, we denote by $u\de\in X$ an $R$-minimising solution of  equation \eqref{IP}, i.e.
    \begin{equation}\label{sol}
        u\de\in\operatorname{arg min}\{R(u) \vert Au=f\},
    \end{equation}
    for some function $R$.
    Several approaches can be employed to obtain an approximation of $u\de$, such as iterative methods or variational regularization. The latter amounts to finding a solution for the following optimization problem
    \begin{equation}\label{optim}
        \min\limits_{u\in X} \left\{\frac{1}{\alpha} H_{f}(Au)+R(u)\right\},
    \end{equation}
    where $\alpha>0$ is the regularization parameter which balances the effect of the data fidelity function $H_{f}$ and the penalty term $R$.  We assume that $H_{f}:Y\rightarrow\mathbb{R}\cup\{+\infty\}$ and $R:X\rightarrow\mathbb{R}\cup\{+\infty\}$ are proper, convex and lower semicontinuous functionals.   While the regularizer $R$ is considered in  ways that promote certain features of the solution, such as sparsity, non-negativity, piecewise constant or linear structure, and so on,  $H_{f}$ is usually chosen according to the type of noise in the data.
     We provide further details on this matter.  For additional information, refer to \cite{Benning}, \cite{benning2018modern} and \cite{sixou2018morozov}. Assume that the distribution of the noise is known. The variational approach is linked to a Bayesian estimation, more specifically, the maximum a posteriori probability (MAP) estimators. This amounts to maximizing the posterior probability density $P(u\vert f)$, which is equivalent to minimizing its negative logarithm, namely $-\operatorname{ln }P(u\vert f).$
    For simplicity, consider the situation when the spaces $X,Y$ are finite dimensional. In a discrete setting, Bayes' formula states that the posterior probability distribution $P(u\vert f)$ of measuring $u$ given data $f$ satisfies the following equality, $$P(u\vert f)=\frac{P(f\vert u)P(u)}{P(f)}.$$
    Since $P(f)$ does not depend on $u$, the minimization problem becomes $$\min_{u}\{-\operatorname{ln }P(f\vert u)-\operatorname{ln }P(u)\}.$$ Frequently, one has the a priori distribution modeled by a Gibbs-prior, $$P(u)\sim\operatorname{exp} (-\alpha R(u)),$$ and the probability $P(f\vert u)$ of measuring $f$ given $u$, 
    \begin{equation}\label{probab}
            P(f\vert u)\sim\operatorname{exp }(-H_f(Au))).
    \end{equation}
    When this is substituted in the previous minimization problem, it becomes
    \begin{equation*}
        \min_{u}\{ H_f(Au)+\alpha R(u) \}.
    \end{equation*}
    By comparing \eqref{probab} to the latter, one can notice the connection between the type of noise in the data and the choice of the data fidelity function $H_f$.
    
    A common setting which deals with additive Gaussian noise in the data is the least squares one,
    \begin{equation*}
    H_{f}(z)=\frac{1}{2}\Vert z-f\Vert^2.
    \end{equation*}
    
     However, in applications such as Positron Emission Tomography, the data are corrupted by Poisson noise, for which the quadratic  $H_f$ is not suitable. The standard choice for the data fidelity term in this context is the Kullback-Leibler functional. 
     The variational regularization involving the latter function as a data fitting and/or as a regularizer has been mentioned as a useful approach for numerous problems arising in physics, astronomical and medical imaging - see \cite{sixou2018morozov} on spectral computerized tomography and Section 5.3 in \cite{Engl00} on maximum entropy regularization, and the references therein.

   As a general regularization strategy, one is interested to show the existence of solutions for problem \eqref{optim}, to prove  convergence of these minimizers to the $R$-minimizing solution of \eqref{IP} with respect to some topology, and to establish error estimates.
    
    We will focus here on error estimates. They can be derived under additional assumptions on the solution, called source conditions, which entail regularity properties of the solution, e.g. some Sobolev smoothness (see \cite[Section 10.5]{Engl00}, \cite{hohage2017characterizations}, \cite{hohage2015verification}, \cite{weidling2015variational}).
    
    Let us recall some background on error estimates for inverse problems. 
    See, for example, the book \cite{Engl00}, Chapter 3.2, where error estimates with respect to the norm are presented in the setting of both quadratic data fidelity and penalty term in Hilbert spaces, under the source condition $u^\dagger\in \text{Ran}(A^*A)^\nu$ for $\nu\in (0,1]$.  Here, $\text{Ran}$ stands for the range of an operator and $A^*$ denotes the adjoint of $A$.

    In order to extend such results to the case when $R$ is not necessarily quadratic and $X$ is a Banach space, the work \cite{Eggermont93} introduced a source condition which allowed deriving error estimates for the (non-quadratic) entropy regularization that approximates least squares positive solutions.

    Recall that, for a given $v\in X$, the Bregman distance with respect to $R$ and $\xi\in\partial R(v)$ is the function $D_R(\cdot,v):\operatorname{ dom }R\rightarrow[0,+\infty]$ defined by
    \begin{equation*}
         D_R^{\xi}(u,v)=R(u)-R(v)-\langle\xi,u-v\rangle.
     \end{equation*}
     If $\partial R(v)$ is a singleton, we will simply write $D_R(u,v)$.
     More general, if the directional derivative $R^{\circ}(v,u-v)$ of $R$ at $v$ in the direction $u-v$ exists, one defines (see, e.g. the survey \cite{but-res})
     \begin{equation*}
         D_R(u,v)=R(u)-R(v)-R^{\circ}(v,u-v).
     \end{equation*}
     In addition, for $u,v\in\operatorname{ dom }R$ and $\zeta\in\partial R(u), \xi\in\partial R(v)$, one can work with the symmetric Bregman distance between $u,v$, denoted  generically by $D_R^s $, 
     \begin{equation*}
         D_R^{s}(u,v)=D_R^\xi(u,v)+D_R^\zeta(v,u).
     \end{equation*}
        In \cite{Burger04}, the authors extended the source condition from \cite{Eggermont93} to a more general convex regularization functional $R$, that is 
    $\text{Ran}(A^*)\cap\partial R(u\de)\neq\emptyset,$
    and obtained the following estimates in the Bregman distance with respect to $R$,
\begin{equation}\label{slow}
    D_R(u_{\alpha},u\de)=O(\alpha) \text{ and } D_R(u_{\alpha}^{\delta},u\de)=O(\alpha)+\frac{\delta^2}{2\alpha},
    \end{equation}
    where $u_{\alpha}$ and $u_{\alpha}^{\delta}$ denote the solutions of \eqref{optim} in the situation of exact data $f$ and noisy data $f^\delta$, respectively, while $\delta$ is the noise level. The reader is further referred to  \cite{resmerita-scherzer}, \cite{Lorenz-sparse}, \cite{reich2023}, \cite{grasmair-nonconvex}, \cite{Obmann+haltmeier2024}, \cite{ebner-haltmeier} on error estimates for regularization with specific convex penalties or with non-convex regularizers for (non)linear operator equations and variational problems, as well as for regularization in a data-driven framework.   See also the books \cite{Scherzer08} and \cite{bangti-jin-book}  for a broader view on the topic.
    
    The stronger source condition $\text{Ran}(A^*A)\cap\partial R(u\de)\neq\emptyset$ is dealt with in \cite{Resmerita05}, leading to  improved error estimates as follows,
    \begin{equation}\label{fast}
D_R(u_{\alpha},u\de)\leq D_R(u\de-\alpha v, u\de)\text{ and } D_R(u_{\alpha}^{\delta},u\de)\leq D_R(u\de-\alpha v,u\de) +\frac{\delta^2}{2\alpha},
\end{equation}
    where $v\in X$ is a source element, that is $A^*Av\in\partial R(u^\dagger)$. Under additional assumptions on  $R$, one obtains the convergence rates $O(\alpha^2)$ and $O(\delta^{\frac{4}{3}})$ if $\alpha\sim\delta^{\frac{2}{3}}$, for exact and noisy data, respectively.

     In \cite{Benning}, the author employed the strong source condition $\text{Ran}(A^*F'(A\cdot))\cap\partial R(u\de)\neq\emptyset$ in the more general setting when $H_{f}(z)=F(z-f)$ for a given function $F:Y\rightarrow\mathbb{R}$, which leads to a similar estimate as in \cite{Resmerita05}. Interestingly, this regularity condition highlights the interplay between the data fidelity and the regularizer, which is not noticeable when working with least squares solutions.
     
    An alternative formulation for a source condition can be found in \cite{Hofmann07}, specifically the so-called variational source condition - see also \cite{Bot09}, \cite{Flemming18}, \cite{Scherzer08} and \cite{Hohage19}. This approach is quite suitable for lower order error estimates, as well as for nonlinear operator equations. The reader is referred to the works \cite{hohage2017characterizations}, \cite{hohage2015verification} and \cite{weidling2015variational}, showing the meaning of such conditions for some inverse problems. 
    
     Regarding higher order estimates for \eqref{optim}, that is,  covering also intermediary orders between \eqref{slow} and \eqref{fast}, the article  \cite{Grasmair13} shows that they can be obtained by imposing a (variational) source condition on the solution of the Fenchel dual problem of \eqref{optim}, when $H_f$  is some power of the norm; see also  \cite{Hohage19} for the case when the data fidelity is a more general function of the difference of the arguments, which still corresponds to some kind of additive noise. However, to the best of our knowledge, higher order error estimates for problems distorted by non-additive noise are still missing in the literature. 

    Note that \cite{Benning11} derived (lower order) error estimates in Bregman distance under the weaker assumption $\text{Ran}(A^*)\cap\partial R(u\de)\neq\emptyset$ in the framework of general convex fidelity functions, including the ones appropriate for noise following Poisson or gamma distributions. Moreover, the recent work \cite{benning2024}  advances the quantitative analysis by revealing that source condition elements can be computed as solutions of proximal operator based problems.
    
    Our goal is to obtain higher order error estimates when the data are corrupted by non-additive noise. 
    To this end, we introduce a novel equality-type source condition for the Fenchel dual of \eqref{optim}, which matches the range of the operator $A$ and the data fidelity. At the same time, we pursue the study under an appropriate variational source condition for the dual problem. We manage to link the two forms of regularity as done for the additive noise case in \cite{Grasmair13, Hohage19}, and provide an interesting interpretation of the new condition, inspired by \cite{Benning11}. For the sake of clarity,  we detail in this work the analysis for the Poisson noise setting, and point out how the techniques can be applied when more general convex data fidelities are involved.

    We provide more insight on the analysis when passing from the additive noise case $F(f-z)$ to the non-additive one. In the first setting, the dual problem contains a linear term, thus simplifying the calculations, whereas such a term is missing in the non-additive  noise case and therefore, is enforced  using a Taylor expansion of an appropriate function. Additionally, we employ a strong convexity property for the Fenchel conjugate of the data fidelity (compare to \cite{Grasmair13}).

        This work is presented as follows.  Assumptions and results useful for the subsequent study are stated in  Section \ref{preliminaries}. 
        Section \ref{exactdatacase} focuses on important relations employing Bregman distances that link primal and dual variables. In Section \ref{fidelitiesscaling} we derive error estimates in the case of data  fidelities with scaling properties, corresponding to  additive noise (compare to \cite{Benning}). Thereafter, in Section \ref{KLfidelity}, we investigate the framework when the data fidelity is the Kullback-Leibler functional,  using an operator range source condition, as well as a variational one. Moreover, we specialize the results for the joint Kullback-Leibler regularization, and   provide an interpretation of the novel source condition.
         In Section \ref{generalfidelities} we formulate a generalization of the strong source condition which facilitates attaining error estimates for variational regularization involving a broader class of convex data fidelities. 

\section{Preliminaries}\label{preliminaries}
    Let us  recall some basic results  and introduce some assumptions that will be used in the next sections.
    
    As mentioned in the Introduction, suppose that $H_{f}:Y\rightarrow\mathbb{R}\cup\{+\infty\}$ and $R:X\rightarrow\mathbb{R}\cup\{+\infty\}$ are proper, convex and lower semicontinuous functionals on the corresponding Banach spaces. 
    Throughout this work, we assume that  problem \eqref{optim} admits a minimizer $u_{\alpha}$ for any $\alpha>0$, and  there exists
    $u_0\in\operatorname{dom }(H_f\circ A)\cap\operatorname{dom } R$, such that $H_f$ is continuous at $Au_0$. 
    
        Moreover, let the source condition $\operatorname{Ran }(A^*)\cap\partial R(u\de)\neq\emptyset$ hold for $u\de$ as in \eqref{sol},
    i.e.,
    \begin{equation} \label{SC1}
        \exists\; p\de\in Y^* \text{ such that } \xi\de:=A^*p\de\in\partial R(u\de).
    \end{equation}
    
    By employing Theorem 4.1 and Proposition 4.1 from \cite{Ekeland99}, Part I, Chapter III, we get that the dual problem
    \begin{equation}\label{dualpb}
        -\min\limits_{p\in Y^*}\left\{\frac{1}{\alpha} H^*_f(-\alpha p)+R^* (A^* p)\right\}
    \end{equation}
    admits a solution $p_{\alpha}\in Y^*$, and the following optimality conditions hold: 
    \begin{gather*}
        A^*p_{\alpha}\in\partial R(u_{\alpha}),\\
        -\alpha p_{\alpha}\in\partial H_f(Au_\alpha).
    \end{gather*}

         Let $\Omega,\Gamma\subset\mathbf{R}^2$ be two bounded and measurable sets. Recall that the Boltzmann-Shannon entropy is the function $g:L^1(\Gamma)\rightarrow(-\infty,+\infty]$ given by
    \begin{gather*}
        g(x)=
        \begin{cases}
            \int\limits_{\Omega}{x(t)\text{ ln } x(t)dt}, & \text{ if } x\geq0\text{ a.e. and } x\text{ ln }x\in L^1(\Omega),\\
            +\infty, & \text{ otherwise}
        \end{cases}
    \end{gather*}
    while the Kullback-Leibler functional is defined as the Bregman distance with respect to the Boltzmann-Shannon entropy, that is $$KL:L^1(\Gamma)\times L^1(\Gamma)\rightarrow[0,+\infty], \; KL(u,v)=D_g(u,v).$$ 
    Therefore, one has (see, e.g., \cite{Resmerita05}),
    \begin{gather} \label{KLdef}
        KL(u,v)=
            \begin{cases}
                \int\limits_{\Gamma}\left[u(t)\text{ ln }\frac{u(t)}{v(t)}-u(t)+v(t) \right]dt, & \text{ if } u,v\in\operatorname{ dom }g,\\
                +\infty, & \text{ otherwise.}
            \end{cases}
    \end{gather}

        It is known that the positive orthant of any $L^q$ space has an empty interior when $q\in(1,\infty)$, as opposed to the case $q=\infty$ - see \cite{Borwein-Limber}. Therefore, as in  Section 4.1.3 in \cite{jung} and the references therein, we consider the convex set 
    \begin{equation}\label{defE1}
        E=\{v\in L^1(\Gamma) \vert 0<\operatorname{essinf} v\leq\operatorname{esssup} v<+\infty\}\subset L^{\infty}(\Gamma).
    \end{equation}
    This will play an important role in our analysis related to the Kullback-Leibler fidelity, due to the property 
    \begin{equation}\label{E1int}
        E=\operatorname{int }_{L^{\infty}(\Gamma)}L^{\infty}_+(\Gamma).
    \end{equation}
    Recall that the essential infimum and essential supremum of a function $v$ are given by 
    \begin{gather*}
        \operatorname{ess inf} v=\operatorname{sup}\{\lambda\in\mathbb{R} \cup\{-\infty, +\infty\} : v\geq\lambda \text{ a.e.}\},\\
        \operatorname{ess sup} v=\operatorname{inf}\{\lambda\in\mathbb{R} \cup\{-\infty, +\infty\} : v\leq\lambda \text{ a.e.}\}.
    \end{gather*}

        The following statement will be instrumental in the forthcoming sections.
    \begin{theo}[{\cite[356]{Attouch14}}][Theorem 9.5.1]\label{lemabenninggen}
         Let $G:X\rightarrow\mathbb{R}\cup\{+\infty\}$ be a proper, convex and lower semicontinuous function. Then, for any $u\in X$ and $v\in X^*$, one has $$v\in\partial G(u) \Leftrightarrow u\in\partial G^*(v).$$
     \end{theo}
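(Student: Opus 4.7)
The plan is to reduce both sides of the claimed equivalence to the Fenchel--Young equality $G(u)+G^{*}(v)=\langle v,u\rangle$, and then close the argument by invoking the Fenchel--Moreau biconjugate theorem, which applies precisely because $G$ is proper, convex and lower semicontinuous.

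First, I would unfold the definition of the subdifferential. The relation $v\in\partial G(u)$ means $G(w)\ge G(u)+\langle v,w-u\rangle$ for all $w\in X$, equivalently $\langle v,u\rangle-G(u)\ge\langle v,w\rangle-G(w)$ for all $w$. Taking the supremum over $w$ on the right-hand side, this is the same as saying $\langle v,u\rangle-G(u)\ge G^{*}(v)$. Combined with the Fenchel--Young inequality $G^{*}(v)\ge\langle v,u\rangle-G(u)$ (which is automatic from the definition of $G^{*}$), this forces the Fenchel--Young equality
\begin{equation*}
G(u)+G^{*}(v)=\langle v,u\rangle.
\end{equation*}
The converse is immediate: if the equality holds, then rearranging gives $\langle v,u\rangle-G(u)\ge\langle v,w\rangle-G(w)$ for every $w$, which is exactly $v\in\partial G(u)$. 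Hence $v\in\partial G(u)$ is equivalent to the Fenchel--Young equality.

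Next, I would apply the same reasoning to the conjugate $G^{*}$, which is itself proper, convex and weak-$*$ lower semicontinuous on $X^{*}$. The analogous characterization gives that $u\in\partial G^{*}(v)$ is equivalent to $G^{*}(v)+G^{**}(u)=\langle v,u\rangle$. At this point I invoke the Fenchel--Moreau theorem: since $G$ is proper, convex and lower semicontinuous, one has $G^{**}=G$. Substituting this identity shows that $u\in\partial G^{*}(v)$ is also equivalent to $G(u)+G^{*}(v)=\langle v,u\rangle$. Therefore both conditions in the statement are equivalent to the same Fenchel--Young equality, and hence to each other.

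The main obstacle is the use of the Fenchel--Moreau biconjugate theorem $G^{**}=G$, which is the only nontrivial ingredient and is exactly where the full strength of the hypotheses (proper, convex, lower semicontinuous) enters. Everything else is an algebraic manipulation of the definition of $\partial G$ and the Fenchel--Young inequality, so the proof is essentially a short chain of equivalences once $G^{**}=G$ is taken as known.
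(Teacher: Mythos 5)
Your proof is correct. The paper does not prove this statement at all --- it is quoted verbatim from Attouch, Buttazzo and Michaille (Theorem 9.5.1) as a known result --- and your argument is precisely the standard textbook derivation: characterize $v\in\partial G(u)$ by the Fenchel--Young equality $G(u)+G^*(v)=\langle v,u\rangle$, do the same for $u\in\partial G^*(v)$ in terms of $G^*(v)+G^{**}(u)=\langle v,u\rangle$, and identify the two via the Fenchel--Moreau theorem $G^{**}=G$. The only point worth making explicit is that $\partial G^*(v)$ a priori lives in $X^{**}$, so the second equivalence is applied to $u$ regarded as an element of $X^{**}$ through the canonical embedding, under which $G^{**}$ restricted to $X$ coincides with $G$; with that remark the chain of equivalences is complete.
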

     
     The next lemma is a particular instance of the previous theorem.
     \begin{lemma}[{\cite[51]{Benning}}][Lemma 2.7]\label{lemabenning}
         Let $G:X\rightarrow\mathbb{R}$ be a Fr\'echet differentiable functional with invertible Fr\'echet derivative $G'$, i.e. $(G')^{-1}(v)$ exists for all $v\in X^*$. Then the inverse of the Fr\'echet derivative is the derivative of the convex conjugate of $G$, i.e., $$\forall\; v\in X^*,\quad (G^*)'(v)=(G')^{-1}(v).$$
     \end{lemma}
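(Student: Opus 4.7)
My plan is to derive this lemma as a direct corollary of Theorem \ref{lemabenninggen}, using Fréchet differentiability of $G$ to collapse the subdifferential of $G$ to a singleton at every point and then inverting the resulting duality relation via the invertibility hypothesis on $G'$. No new analysis beyond subdifferential calculus is required.

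First I would record that, for a convex Fréchet differentiable $G$, the subdifferential at every point is the singleton containing the Fréchet derivative, that is $\partial G(u)=\{G'(u)\}$. Applying Theorem \ref{lemabenninggen} then transfers this to the equivalence $v=G'(u)\Leftrightarrow u\in\partial G^*(v)$, valid for all $u\in X$ and $v\in X^*$. Fixing an arbitrary $v\in X^*$, the invertibility assumption produces exactly one $u\in X$ satisfying $G'(u)=v$, namely $u=(G')^{-1}(v)$, and this $u$ therefore lies in $\partial G^*(v)$. Conversely, any $u'\in\partial G^*(v)$ must satisfy $v\in\partial G(u')=\{G'(u')\}$, hence $u'=(G')^{-1}(v)$ as well, so $\partial G^*(v)=\{(G')^{-1}(v)\}$ is a singleton for every $v$.

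The remaining step is to promote this singleton-subdifferential identification to the derivative identity $(G^*)'(v)=(G')^{-1}(v)$ asserted by the lemma. Since $G^*$ is proper, convex and lower semicontinuous as a Fenchel conjugate, a single-valued subdifferential at $v$ is equivalent to Gâteaux differentiability of $G^*$ with that value, which already yields the identity as stated. Strengthening this to Fréchet differentiability of $G^*$, if desired, follows from continuity of $(G')^{-1}$, which is the natural companion hypothesis to invertibility in the Banach space setting relevant to the paper.

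The main obstacle I expect is essentially bookkeeping rather than mathematical depth: being careful about Gâteaux versus Fréchet regularity in infinite dimensions, and verifying that the convexity and lower semicontinuity needed to invoke Theorem \ref{lemabenninggen} are implicit in the lemma's framework (convexity is automatic in the applications of interest, and real-valued Fréchet differentiable functions are continuous, hence lower semicontinuous). Modulo these points the core argument collapses to a one-line manipulation of the duality $v\in\partial G(u)\Leftrightarrow u\in\partial G^*(v)$.
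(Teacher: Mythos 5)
Your argument is correct and takes essentially the same route as the paper, which offers no separate proof but simply observes that the lemma is a particular instance of Theorem \ref{lemabenninggen}; your derivation (singleton subdifferential $\partial G(u)=\{G'(u)\}$, the duality $v\in\partial G(u)\Leftrightarrow u\in\partial G^*(v)$, and invertibility of $G'$ forcing $\partial G^*(v)$ to be the singleton $\{(G')^{-1}(v)\}$) is exactly the intended filling-in. Your attention to the G\^{a}teaux-versus-Fr\'echet issue is the right bookkeeping: since $\partial G^*(v)\neq\emptyset$ for every $v\in X^*$, the conjugate is finite and hence continuous on all of $X^*$, so the singleton subdifferential does yield differentiability of $G^*$ with the claimed value.
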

     
     By employing Theorem \ref{lemabenninggen}, one gets equivalent formulations of the optimality conditions,
     \begin{gather}
        \xi_{\alpha}:=A^*p_{\alpha}\in\partial R(u_{\alpha}), \label{OC1}\\
        \zeta_{\alpha}:=Au_\alpha\in\partial H_f^*(-\alpha p_{\alpha}). \label{OC2}
    \end{gather}
Moreover, assume that $\partial H_f^*(-\alpha p\de)\neq\emptyset$, and let us consider a subgradient $\zeta^{\dagger}_{\alpha}\in\partial H_f^*(-\alpha p\de)$ which is needed for dealing with certain Bregman distances.    
\section{ Error estimates for general convex fidelities} \label{exactdatacase}
Firstly, we focus on establishing error estimates for \eqref{optim} in  the exact data situation. With this purpose in mind, we begin by stating a couple of essential relations employing the Bregman distances associated to functions that are relevant for the primal and the dual problems.
\begin{lemma}
If   source condition \eqref{SC1} holds, then the following statement  is true,
\begin{equation}\label{egalitateBregman}
    D_R^s(u_\alpha,u\de)+\frac{1}{\alpha} D_{H^*_f}^s(-\alpha p_\alpha,-\alpha p\de) = \langle f,p\de-p_{\alpha}\rangle + \langle \zeta^{\dagger}_{\alpha},p_{\alpha}-p\de\rangle,
\end{equation}
where $D_{H^*_f}^s(-\alpha p_\alpha,-\alpha p\de)$ is the symmetric Bregman distance between $-\alpha p_\alpha,-\alpha p\de$ with respect to the subgradients $\zeta_{\alpha},\zeta^{\dagger}_{\alpha}$, respectively, while $D_R^s(u_\alpha,u\de)$ is defined via $\xi^\dagger$ and  $\xi_\alpha$ given by \eqref{SC1} and \eqref{OC1}.
\end{lemma}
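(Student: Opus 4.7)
The plan is to expand both symmetric Bregman distances using the well-known identity that for any proper convex $G$ and $a,b \in \mathrm{dom}\,G$ with subgradients $p\in\partial G(a)$, $q\in\partial G(b)$,
$$D_G^s(a,b) = D_G^p(b,a) + D_G^q(a,b) = \langle p-q, a-b\rangle,$$
which follows immediately from the definition. Applying this to $R$ at the points $u_\alpha, u\de$ with subgradients $\xi_\alpha = A^*p_\alpha \in \partial R(u_\alpha)$ (from \eqref{OC1}) and $\xi\de = A^*p\de \in \partial R(u\de)$ (from \eqref{SC1}) yields
$$D_R^s(u_\alpha,u\de) = \langle A^*(p_\alpha - p\de), u_\alpha - u\de\rangle = \langle p_\alpha - p\de, Au_\alpha - Au\de\rangle.$$

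Similarly, applying the identity to $H_f^*$ at the points $-\alpha p_\alpha, -\alpha p\de$ with the prescribed subgradients $\zeta_\alpha \in \partial H_f^*(-\alpha p_\alpha)$ (from \eqref{OC2}) and $\zeta^\dagger_\alpha \in \partial H_f^*(-\alpha p\de)$ gives
$$\frac{1}{\alpha}D_{H_f^*}^s(-\alpha p_\alpha,-\alpha p\de) = \frac{1}{\alpha}\langle \zeta_\alpha - \zeta^\dagger_\alpha,\, -\alpha p_\alpha + \alpha p\de\rangle = \langle \zeta_\alpha - \zeta^\dagger_\alpha, p\de - p_\alpha\rangle.$$

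Next I would substitute the two structural identities $Au\de = f$ (since $u\de$ solves \eqref{IP}) and $\zeta_\alpha = Au_\alpha$ (from \eqref{OC2}) into the first expression, obtaining
$$D_R^s(u_\alpha,u\de) = \langle p_\alpha - p\de, \zeta_\alpha - f\rangle.$$
Adding the two terms and collecting the coefficient of $p_\alpha - p\de$ makes the $\zeta_\alpha$ contributions cancel,
$$\langle p_\alpha - p\de, \zeta_\alpha - f\rangle - \langle p_\alpha - p\de, \zeta_\alpha - \zeta^\dagger_\alpha\rangle = \langle p_\alpha - p\de, \zeta^\dagger_\alpha - f\rangle,$$
which is exactly $\langle f, p\de - p_\alpha\rangle + \langle \zeta^\dagger_\alpha, p_\alpha - p\de\rangle$, i.e.\ the right-hand side of \eqref{egalitateBregman}.

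There is no real obstacle here; the result is essentially a bookkeeping identity. The only subtle point is ensuring that the dual subgradient $\zeta^\dagger_\alpha$ really exists so that $D_{H_f^*}^s$ is well defined, which has been assumed just before the lemma's statement. The whole argument rests on three ingredients: the symmetric-Bregman identity $D^s_G(a,b) = \langle p-q,a-b\rangle$, the transpose relation $\langle A^*p,u\rangle = \langle p,Au\rangle$ used to move the duality pairing from $X$ to $Y$, and the primal--dual optimality/source conditions \eqref{SC1}, \eqref{OC1}, \eqref{OC2} that identify the subgradients involved.
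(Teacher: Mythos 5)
Your proof is correct and follows essentially the same route as the paper: the paper's own argument expands $D_R^s(u_\alpha,u\de)$ from the definition (which is exactly the identity $D_G^s(a,b)=\langle p-q,a-b\rangle$ in that instance), uses $Au\de=f$, $Au_\alpha=\zeta_\alpha$, and then adds and subtracts $\zeta_\alpha^\dagger$ to recognize the $H_f^*$ Bregman term, which is precisely your cancellation. Your presentation is just a slightly more symmetric packaging of the same bookkeeping.
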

\begin{proof}
    We start by evaluating 
\begin{align*}
    D_R^s(u_\alpha,u\de)&=D_R(u_\alpha,u\de)+D_R(u\de,u_\alpha)
    \\&=R(u_\alpha)-R(u\de)-\langle u_\alpha-u\de,A^*p\de\rangle+R(u\de)-R(u_\alpha)-\langle u\de-u_{\alpha},A^*p_{\alpha}\rangle
    \\&=\langle u\de-u_{\alpha},A^*p\de-A^*p_{\alpha}\rangle=\langle f,p\de-p_{\alpha}\rangle +\langle Au_{\alpha},p_{\alpha}-p\de\rangle,
\end{align*}
where  \eqref{OC1} and  \eqref{SC1} were used.
Due to \eqref{OC2}, we get 
\begin{align*}
    D_R^s(u_\alpha,u\de)&=\langle f,p\de-p_{\alpha}\rangle +\langle \zeta_{\alpha},p_{\alpha}-p\de\rangle\\
    &= \langle f,p\de-p_{\alpha}\rangle -\frac{1}{\alpha}\langle \zeta_{\alpha}-\zeta^{\dagger}_{\alpha},-\alpha p_{\alpha}+\alpha p\de\rangle+\langle \zeta^{\dagger}_{\alpha},p_{\alpha}-p\de\rangle,
\end{align*}
which can be rewritten as
\begin{equation*}
    D_R^s(u_\alpha,u\de)+\frac{1}{\alpha} D_{H^*_f}^s(-\alpha p_\alpha,-\alpha p\de) = \langle f,p\de-p_{\alpha}\rangle + \langle \zeta^{\dagger}_{\alpha},p_{\alpha}-p\de\rangle.
\end{equation*}
\end{proof}

The following theorem provides a way to derive error estimates, under the assumption that a certain inequality involving the subgradient of the Fenchel conjugate of the data fidelity holds.
\begin{theo}\label{bregmanestim}
    If  source condition \eqref{SC1} holds and there exists $v\in X$ and $s:[0,+\infty)\rightarrow\mathbb{R}$ such that $s(\alpha)\rightarrow0, $ for $\alpha\rightarrow0$ and
    \begin{equation} \label{egalit}
        \langle\zeta^{\dagger}_{\alpha}, p_{\alpha} -p\de\rangle - \frac{1}{\alpha} D_{H^*_f}^s(-\alpha p_\alpha,-\alpha p\de) \leq \langle f,p_{\alpha}-p\de\rangle-s(\alpha)\langle Av, p_{\alpha}-p\de\rangle+ O(\alpha^r),
    \end{equation}
     is verified for some $r\in\mathbb{N}$, then  the following inequality is true: 
    \begin{equation*}
        D_{R}(u_{\alpha}, u\de)\leq D_{R}(u\de-s(\alpha)v, u\de) + O(\alpha^r).
    \end{equation*}
    Moreover, assuming that \eqref{egalit}  holds without the term $O(\alpha^r)$, then the conclusion remains valid without this term, i.e.
    \begin{equation*}
        D_{R}(u_{\alpha}, u\de)\leq D_{R}(u\de-s(\alpha)v, u\de).
    \end{equation*}
\end{theo}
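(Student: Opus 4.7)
The plan is to combine the Bregman identity from the preceding lemma with the assumed inequality \eqref{egalit}, and then convert the resulting bound on the symmetric Bregman distance into the desired bound on $D_R(u_\alpha,u\de)$ via a subgradient inequality at $u_\alpha$.

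First I would substitute \eqref{egalit} into \eqref{egalitateBregman}. The identity \eqref{egalitateBregman} reads $D_R^s(u_\alpha,u\de)+\frac{1}{\alpha}D_{H^*_f}^s(-\alpha p_\alpha,-\alpha p\de)=\langle f,p\de-p_\alpha\rangle+\langle\zeta^\dagger_\alpha,p_\alpha-p\de\rangle$, and \eqref{egalit} upper bounds the last summand. After substitution, the $\frac{1}{\alpha}D_{H^*_f}^s$ terms on both sides cancel, and the two $\langle f,\cdot\rangle$ contributions cancel because $p\de-p_\alpha$ and $p_\alpha-p\de$ are opposite. What remains is
\begin{equation*}
D_R^s(u_\alpha,u\de)\leq s(\alpha)\langle Av,\,p\de-p_\alpha\rangle+O(\alpha^r)=s(\alpha)\langle v,\,\xi\de-\xi_\alpha\rangle+O(\alpha^r),
\end{equation*}
where the last identity uses $\xi\de=A^*p\de$ and $\xi_\alpha=A^*p_\alpha$ from \eqref{SC1} and \eqref{OC1}.

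Next I would bridge $D_R^s(u_\alpha,u\de)$ to the target quantity $D_R(u_\alpha,u\de)-D_R(u\de-s(\alpha)v,u\de)$. The natural tool is the subgradient inequality applied at $u_\alpha$: since $\xi_\alpha\in\partial R(u_\alpha)$, one has $R(u\de-s(\alpha)v)\geq R(u_\alpha)+\langle\xi_\alpha,u\de-s(\alpha)v-u_\alpha\rangle$. Expanding $D_R(u_\alpha,u\de)-D_R(u\de-s(\alpha)v,u\de)$ directly from the definition and substituting this lower bound for $R(u\de-s(\alpha)v)$, the $R(u\de)$ terms cancel and after collecting the inner products one obtains
\begin{equation*}
D_R(u_\alpha,u\de)-D_R(u\de-s(\alpha)v,u\de)\leq\langle\xi_\alpha-\xi\de,u_\alpha-u\de\rangle+s(\alpha)\langle\xi_\alpha-\xi\de,v\rangle=D_R^s(u_\alpha,u\de)-s(\alpha)\langle v,\xi\de-\xi_\alpha\rangle.
\end{equation*}

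Finally, combining this with the bound on $D_R^s(u_\alpha,u\de)$ from the first step, the two $s(\alpha)\langle v,\xi\de-\xi_\alpha\rangle$ contributions cancel exactly, leaving $D_R(u_\alpha,u\de)\leq D_R(u\de-s(\alpha)v,u\de)+O(\alpha^r)$, as required; the second claim follows by the same argument with the $O(\alpha^r)$ term absent throughout. The main (small) obstacle I anticipate is purely bookkeeping: making sure the sign of $s(\alpha)\langle Av,p\de-p_\alpha\rangle$ coming out of the cancellation in step one is exactly what is needed to be cancelled by the subgradient-inequality step. There is no analytic difficulty here — both steps are one-line convexity manipulations — the content of the theorem is the recognition that \eqref{egalit} is precisely the hypothesis needed to make the two terms match.
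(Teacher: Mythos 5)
Your proof is correct and follows essentially the same route as the paper's: both substitute \eqref{egalit} into \eqref{egalitateBregman} to cancel the $\frac{1}{\alpha}D_{H^*_f}^s$ and $\langle f,\cdot\rangle$ terms, and both then invoke the subgradient inequality $\langle A^*p_\alpha, u\de - s(\alpha)v - u_\alpha\rangle \leq R(u\de - s(\alpha)v) - R(u_\alpha)$ to pass from the bound on $D_R^s(u_\alpha,u\de)$ to the bound on $D_R(u_\alpha,u\de)$. The only difference is bookkeeping: you bound the difference $D_R(u_\alpha,u\de)-D_R(u\de-s(\alpha)v,u\de)$ directly, whereas the paper expands $D_R = D_R^s - D_R(u\de,u_\alpha)$ and regroups; the two computations are identical in content.
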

\begin{proof} 
If \eqref{egalit} holds, from \eqref{egalitateBregman} one obtains
\begin{equation*}
    D_R^s(u_\alpha,u\de) \leq -s(\alpha)\langle Av, p_{\alpha}-p\de\rangle+ O(\alpha^r),
\end{equation*} 
and from the definition of the symmetric Bregman distance and \eqref{OC1} we have
\begin{align*}
    D_R(u_\alpha,u\de) &\leq -D_R(u\de,u_{\alpha}) - s(\alpha)\langle Av, p_{\alpha}-p\de\rangle+ O(\alpha^r)\\
    &= R(u_{\alpha})-R(u\de)+\langle A^*p_{\alpha},u\de- s(\alpha)v-u_{\alpha}\rangle + s(\alpha)\langle Av,p\de\rangle+ O(\alpha^r).
\end{align*}
Condition \eqref{OC1} implies that $\langle A^*p_{\alpha},u\de- s(\alpha)v-u_{\alpha}\rangle\leq R(u\de- s(\alpha)v)-R(u_{\alpha})$ which, together with the previous inequality, gives
\begin{align*}
    D_R(u_\alpha,u\de) &\leq R(u\de- s(\alpha)v)-R(u\de)+s(\alpha)\langle v,A^*p\de\rangle+ O(\alpha^r)\\
    &=D_{R}(u\de-s(\alpha)v, u\de) + O(\alpha^r).
\end{align*}
If \eqref{egalit} is true without the term $O(\alpha^r)$, one can arrive 
in  a similar way to the corresponding inequality.
\end{proof}

\begin{remark}\label{remarkord}
    For example, if $R$ is assumed to be continuous, then $\lim\limits_{\alpha\rightarrow0}D_R(u\de-s(\alpha)v,u\de)=0$ holds. Moreover, if $R$ is twice differentiable around $u\de$ and the second derivative is bounded around $u\de$, then one obtains  $D_R(u\de-s(\alpha)v,u\de)=O(s(\alpha)^2)$. 
\end{remark}

    Assume now that the exact data $f$ are not known, but we only have access to an approximation $f^{\delta}$ satisfying
    \begin{equation*}
        \Vert f-f^{\delta}\Vert\leq\delta,
    \end{equation*}
    with respect to some norm.
    We remark that a more natural assumption would be $H_{f^{\delta}}(f)\leq h(\delta)$ for some appropriate function $h$, but for technical reasons we assume the norm inequality.     
    Note that this stratagem has already been employed in the Poisson noise literature - see, e.g., \cite{haltmeier2009regularization}.
    
    Let $u_{\alpha}^{\delta}$ and $p_{\alpha}^{\delta}$ be solutions for the primal and the dual problem, respectively. They must satisfy the conditions of the Fenchel-Rockafellar Theorem,
    \begin{gather}
        \xi_{\alpha}^{\delta}:=A^*p_{\alpha}^{\delta}\in\partial R(u_{\alpha}^{\delta}) \label{OC4},\\
        \zeta_{\alpha}^{\delta}:=Au_\alpha^{\delta}\in\partial H_{f^{\delta}}^*(-\alpha p_{\alpha}^{\delta}). \nonumber
    \end{gather}
    Assume that $\partial H_{f^{\delta}}^*(-\alpha p\de)\neq\emptyset$ and consider
    \begin{equation}\label{zeta}
        \zeta_{\alpha}^{\dagger}\in\partial H_{f^{\delta}}^*(-\alpha p\de).
    \end{equation}
    Theorem \ref{bregmanestim} can be formulated for the noisy data case as follows.
    \begin{theo}
    If  source condition \eqref{SC1} holds, there exists $C>0$, $v\in X$ and $s:[0,+\infty)\rightarrow\mathbb{R}$ such that $s(\alpha)\rightarrow0, $ for $\alpha\rightarrow0$ and
    \begin{equation} \label{egalit}
        \langle\zeta^{\dagger}_{\alpha}, p_{\alpha}^{\delta} -p\de\rangle - \frac{1}{\alpha} D_{H^*_{f^{\delta}}}^s(-\alpha p_\alpha^{\delta},-\alpha p\de) \leq \langle f,p_{\alpha}^{\delta}-p\de\rangle-s(\alpha)\langle Av, p_{\alpha}^{\delta}-p\de\rangle+ C\frac{\delta^2}{\alpha}+O(\alpha^r),
    \end{equation}
     is verified for some $r\in\mathbb{N}$, then  the following inequality is true: 
      \begin{equation*}
          D_{R}(u_{\alpha}^{\delta}, u\de)\leq D_{R}(u\de-s(\alpha)v, u\de) + C\frac{\delta^2}{\alpha} + O(\alpha^r).
        \end{equation*}
    \end{theo}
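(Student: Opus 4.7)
The plan is to mirror the argument of Theorem \ref{bregmanestim} verbatim, with the only bookkeeping difference being that we now work with the noisy primal/dual optimizers $u_\alpha^\delta,p_\alpha^\delta$ and with $H^*_{f^\delta}$ in place of $H^*_f$. Notice that the right-hand side of the hypothesis \eqref{egalit} still features $\langle f,p_\alpha^\delta-p\de\rangle$ (not $\langle f^\delta,\cdot\rangle$), so we do not need an intermediate step involving $\|f-f^\delta\|\le\delta$ here; the noise-dependence has been absorbed into the term $C\delta^2/\alpha$.

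First I would establish the noisy analogue of \eqref{egalitateBregman}, namely
\begin{equation*}
    D_R^s(u_\alpha^\delta,u\de)+\frac{1}{\alpha}D_{H^*_{f^\delta}}^s(-\alpha p_\alpha^\delta,-\alpha p\de)=\langle f,p\de-p_\alpha^\delta\rangle+\langle\zeta_\alpha^\dagger,p_\alpha^\delta-p\de\rangle,
\end{equation*}
where $D_R^s$ is computed via $\xi\de=A^*p\de\in\partial R(u\de)$ from \eqref{SC1} and $\xi_\alpha^\delta=A^*p_\alpha^\delta\in\partial R(u_\alpha^\delta)$ from \eqref{OC4}, while $D_{H^*_{f^\delta}}^s$ is computed via $\zeta_\alpha^\delta=Au_\alpha^\delta\in\partial H^*_{f^\delta}(-\alpha p_\alpha^\delta)$ and $\zeta_\alpha^\dagger\in\partial H^*_{f^\delta}(-\alpha p\de)$ from \eqref{zeta}. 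The derivation is essentially the same calculation performed in the previous lemma: expanding $D_R^s$ gives $\langle A(u\de-u_\alpha^\delta),p\de-p_\alpha^\delta\rangle=\langle f,p\de-p_\alpha^\delta\rangle+\langle\zeta_\alpha^\delta,p_\alpha^\delta-p\de\rangle$, after which splitting $\zeta_\alpha^\delta=(\zeta_\alpha^\delta-\zeta_\alpha^\dagger)+\zeta_\alpha^\dagger$ and using the definition of $D^s_{H^*_{f^\delta}}$ on the first piece yields the displayed identity.

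Next I would insert the hypothesis \eqref{egalit} (the noisy version) into this identity. The two copies of $\langle f,p\de-p_\alpha^\delta\rangle$ cancel, producing
\begin{equation*}
    D_R^s(u_\alpha^\delta,u\de)\le -s(\alpha)\langle Av,p_\alpha^\delta-p\de\rangle+C\tfrac{\delta^2}{\alpha}+O(\alpha^r).
\end{equation*}
Writing $D_R^s(u_\alpha^\delta,u\de)=D_R(u_\alpha^\delta,u\de)+D_R(u\de,u_\alpha^\delta)$ and expanding the second term via \eqref{OC4}, I would rearrange to obtain
\begin{equation*}
    D_R(u_\alpha^\delta,u\de)\le R(u_\alpha^\delta)-R(u\de)+\langle A^*p_\alpha^\delta,u\de-s(\alpha)v-u_\alpha^\delta\rangle+s(\alpha)\langle Av,p\de\rangle+C\tfrac{\delta^2}{\alpha}+O(\alpha^r).
\end{equation*}
Finally, applying the subgradient inequality for $R$ at $u_\alpha^\delta$ in the direction $u\de-s(\alpha)v$ (again using \eqref{OC4}) bounds $\langle A^*p_\alpha^\delta,u\de-s(\alpha)v-u_\alpha^\delta\rangle$ by $R(u\de-s(\alpha)v)-R(u_\alpha^\delta)$, which collapses the right-hand side into $D_R(u\de-s(\alpha)v,u\de)+C\delta^2/\alpha+O(\alpha^r)$, as desired.

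I do not expect any real obstacle: the noisy proof is a line-by-line transcription of the exact-data proof, with $H^*_f$ replaced by $H^*_{f^\delta}$ and an extra $C\delta^2/\alpha$ carried along. The only point requiring care is to verify that the cancellation of $\langle f,p\de-p_\alpha^\delta\rangle$ still happens, i.e.\ that \eqref{egalit} was deliberately written with $f$ rather than $f^\delta$ on its right-hand side; this is precisely why the bound $C\delta^2/\alpha$ appears as an explicit term in \eqref{egalit} rather than implicitly through a noise-level difference $\langle f-f^\delta,p_\alpha^\delta-p\de\rangle$. Once this is recognized, everything else is formally identical to the exact-data case.
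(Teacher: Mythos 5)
Your proposal is correct and follows exactly the route the paper intends: the paper's proof of this theorem consists of the single remark that it is ``similar to the one for the exact data setting,'' and you have carried out precisely that transcription — the noisy Bregman identity (the general analogue of \eqref{noisylemmaeq}), cancellation of the $\langle f,p\de-p_\alpha^\delta\rangle$ terms, and the subgradient inequality from \eqref{OC4} — with the $C\delta^2/\alpha$ term carried along. No gaps.
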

    \begin{proof}
        The proof is similar to the one for the exact data setting.
    \end{proof}

In the following sections, we show that \eqref{egalit} holds for several important instances, that is, for the quite popular data fidelities  $H_f(Au)=F(Au-f)$, the Kullback-Leibler  distance $H_f(Au)=KL(f,Au)$,  and even for more general data-fitting terms. We will provide all the details for the $KL$ setting, including the noisy data case.

\section{Fidelities with scaling properties} \label{fidelitiesscaling}
    From now on, all function equalities will be understood pointwise. 
    For a fixed $f\in Y$, consider the setting $H_f(Au)=F(Au-f)$, where $F:Y\rightarrow\mathbb{R}$ is convex and Fr\'echet differentiable. 
    Assume that the source condition from \cite{Benning} holds, namely 
    \begin{equation}\label{SC2}
    \exists\; p\de\in Y^*, v\de\in X \text{ such that } A^*p\de\in\partial R(u\de) \text{ and }p\de=-F'(-Av\de).
    \end{equation}
    Clearly, if $(F')^{-1}:Y^*\rightarrow Y$ exists, the latter is equivalent to  $Av\de=-(F^*)'(-p\de)$, that is,   $(F^*)'(-p\de)\in \text{Ran}(A)$.
    
     The next result extends Theorem 5.3 from \cite{Benning}, which was proven under the additional assumption that the functional $R$ is absolutely one-homogeneous (i.e. for any $c\in\mathbb{R}$ and $u\in X$, $R(cu)=\vert c\vert R(u)$), to the more general case of a proper, convex and lower semicontinuous function. Note that \eqref{dualpb} reads now as follows (see details below), 
    \begin{equation*}
        -\min\limits_{p\in Y^*}\left\{\frac{1}{\alpha} F^*(-\alpha p)-\langle p,f\rangle+R^* (A^* p)\right\}.
    \end{equation*}
    
    \begin{theo}\label{thscaling}
        Assume that $F:Y\rightarrow\mathbb{R}$ is Fr\'echet differentiable, $(F')^{-1}:Y^*\rightarrow Y$ exists, source condition \eqref{SC2} holds and $F$ satisfies the scaling property
        \begin{equation}\label{scalingprop}
        \exists\; s:[0,+\infty)\rightarrow\mathbb{R} \text{ such that } \lim\limits_{\alpha\rightarrow0}s(\alpha)=0 \text{ and }\forall\;\lambda>0, \forall w\in Y: \lambda F'(w)=F'(s(\lambda)w).
        \end{equation} 
        Then the following inequality is true,
        \begin{equation*}
            D_R(u_\alpha,u\de)\leq D_{R}(u\de-s(\alpha)v\de, u\de).
        \end{equation*}
    \end{theo}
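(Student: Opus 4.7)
The plan is to reduce the theorem to Theorem \ref{bregmanestim} by verifying the key hypothesis \eqref{egalit} (without the $O(\alpha^r)$ remainder) in this particular setting. Since $H_f(z) = F(z-f)$ is a shift of $F$, a direct Fenchel computation gives $H_f^*(q) = F^*(q) + \langle q, f\rangle$, so that the dual problem indeed takes the form stated before the theorem, and, using Lemma \ref{lemabenning} together with the invertibility of $F'$, one has $\partial H_f^*(q) = \{(F')^{-1}(q) + f\}$.

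The crucial step is to evaluate $\zeta_\alpha^\dagger$ via the scaling property. From \eqref{SC2}, $p\de = -F'(-Av\de)$; applying \eqref{scalingprop} with $\lambda = \alpha$ and $w = -Av\de$ yields
\begin{equation*}
-\alpha p\de = \alpha F'(-Av\de) = F'\bigl(-s(\alpha)Av\de\bigr),
\end{equation*}
and therefore $(F')^{-1}(-\alpha p\de) = -s(\alpha)Av\de$. By the formula for $\partial H_f^*$ above, one obtains $\zeta_\alpha^\dagger = f - s(\alpha) Av\de$.

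Substituting this into the left-hand side of \eqref{egalit} gives
\begin{equation*}
\langle \zeta_\alpha^\dagger, p_\alpha - p\de\rangle - \tfrac{1}{\alpha}D_{H_f^*}^s(-\alpha p_\alpha, -\alpha p\de) = \langle f, p_\alpha - p\de\rangle - s(\alpha)\langle Av\de, p_\alpha - p\de\rangle - \tfrac{1}{\alpha}D_{H_f^*}^s(-\alpha p_\alpha, -\alpha p\de),
\end{equation*}
so that \eqref{egalit} collapses to the trivially true inequality $-\tfrac{1}{\alpha}D_{H_f^*}^s \le 0$, which holds because symmetric Bregman distances are nonnegative. In particular the $O(\alpha^r)$ term is not needed, and Theorem \ref{bregmanestim} in its sharper form delivers the conclusion $D_R(u_\alpha, u\de) \le D_R(u\de - s(\alpha)v\de, u\de)$.

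The only non-routine step is step two — making the source-condition element visible on the subgradient side of $\partial H_f^*(-\alpha p\de)$. Everything else (Fenchel conjugate of a shift, invocation of Lemma \ref{lemabenning}, nonnegativity of $D_{H_f^*}^s$, and the reduction to Theorem \ref{bregmanestim}) is bookkeeping. One should also note that \eqref{SC2} requires $(F^*)'(-p\de) \in \mathrm{Ran}(A)$, which is implicitly used when writing $-s(\alpha)Av\de$ as an element of $\partial H_f^*(-\alpha p\de) - f$.
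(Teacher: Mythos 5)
Your proposal is correct and follows essentially the same route as the paper: both verify hypothesis \eqref{egalit} of Theorem \ref{bregmanestim} without the remainder term, using the conjugate of a shift, Lemma \ref{lemabenning}, the scaling property to identify $(F^*)'(-\alpha p\de)=-s(\alpha)Av\de$, and the nonnegativity of the symmetric Bregman distance. The only cosmetic difference is that the paper first establishes the general identity $(F^*)'(\alpha w^*)=s(\alpha)(F^*)'(w^*)$ and then specializes to $w^*=-p\de$, whereas you apply the scaling relation directly at $w=-Av\de$; the computation is the same.
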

    \begin{proof}
        We will show that \eqref{egalit} is satisfied without the $O(\alpha^r)$ term and then we get the conclusion by employing Theorem \ref{bregmanestim}. 
        To this end, one can use formula (4.9), Part I, Chapter I from \cite{Ekeland99} to find the Fenchel conjugate of $H_f(z)=F(z-f)$, $$\forall z^*\in Y^*, \quad H^*_f(z^*)=F^*(z^*)+\langle z^*,f\rangle.$$ 
        Lemma \ref{lemabenning} implies the differentiability of $F^*$, which leads to $(H^*_f)'(z^*)=(F^*)'(z^*)+f.$
        From \eqref{scalingprop} it follows that,   for any $w^*\in Y^*$,
\begin{equation}\label{scalingprop2}
            (F^*)'(\alpha w^*)=(F^*)'(\alpha F'((F^*)'(w^*))) = (F^*)'(F'(s(\alpha)(F^*)'(w^*))) = s(\alpha)(F^*)'(w^*).
        \end{equation}
        We evaluate the left hand side of \eqref{egalit} in this particular case. Since 
        $D_{H^*_f}^s(-\alpha p_\alpha,-\alpha p\de)\geq0$, one gets due to \eqref{scalingprop2},
       
        \begin{align*}
            \langle(H^*_f)'(-\alpha p\de), p_{\alpha} -p\de\rangle - \frac{1}{\alpha} D_{H^*_f}^s(-\alpha p_\alpha,-\alpha p\de) &\leq \langle f,p_{\alpha} -p\de\rangle+\langle(F^*)'(-\alpha p\de), p_{\alpha} -p\de\rangle \\ &=\langle f,p_{\alpha} -p\de\rangle+s(\alpha)\langle(F^*)'(-p\de), p_{\alpha} -p\de\rangle\\
            &= \langle f,p_{\alpha} -p\de\rangle-s(\alpha)\langle Av\de, p_{\alpha} -p\de\rangle,
        \end{align*}
        where for the last equality we employed the equivalent formulation of  source condition \eqref{SC2}. Finally, this shows that \eqref{egalit} holds without the $O(\alpha^r)$ term, hence the conclusion follows from Theorem \ref{bregmanestim}.
        \begin{remark}   
              If $F(z)=\frac{1}{q}\Vert z\Vert^q$ for some $q>1$ and $Y$ is smooth, strictly convex and reflexive (for more details see \cite{Grasmair13}), then it is known that the of $F$ is given by the duality mapping
              \begin{equation*}
                  J_q(z)=\{z^*\in Y^* : z^*(z)=\Vert z^*\Vert\cdot\Vert z\Vert, \Vert z^*\Vert=\Vert z\Vert^{q-1}\}.
              \end{equation*}
              The smoothness of $Y$ implies that $J_q$ is single valued, according to \cite{cioranescu2012geometry}, Chapter I, Corollary 4.5. 
                Using the assumptions that $Y$ is reflexive and strictly convex, and Corollary 1.4 from Chapter II in \cite{cioranescu2012geometry},  it follows that $Y^*$ is smooth. This, together with the previous result gives $J_{q^*}$ single valued.
              \\
               The mapping $J_q$ is $(q-1)$-homogeneous (and single-valued in this case), more precisely, 
              \begin{equation}\label{homogeneity}
                  \forall \;\lambda\in\mathbb{R}, \forall\; z\in Y, \quad J_q(\lambda z)=\operatorname{sgn } (\lambda)\cdot\vert\lambda\vert^{q-1}\cdot J_q(z).
              \end{equation}
              Equality \eqref{homogeneity} implies 
              \begin{equation*}
                  J_q\left(\operatorname{sgn } (\lambda) \vert\lambda\vert^\frac{1}{q-1} y\right)=\lambda J_q(y),
              \end{equation*}
              hence the function $s$ defined in \eqref{scalingprop} is given by $s(\lambda)=\operatorname{sgn }(\lambda)\vert\lambda\vert^{\frac{1}{q-1}}$ and satisfies $\lim\limits_{\alpha\rightarrow0} s(\alpha)=0$.
              Then,  source condition  \eqref{SC2} reads as
                \begin{equation*}
                    \exists\; p\de\in Y^*, v\de\in X \text{ such that } A^*p\de\in\partial R(u\de) \text{ and }p\de=-J_q(-Av\de),
                \end{equation*}
              or, equivalently,
                \begin{equation}\label{SClin}
                    \exists\; p\de\in Y^*, v\de\in X \text{ such that } A^*p\de\in\partial R(u\de) \text{ and } J_{q^*}(p\de)=Av\de,
                \end{equation}
                due to the reflexivity of $Y$, which implies $J_{q^*}=(J_q)^{-1}$.
            Thus, we recover the source condition from Lemma 5.1 in \cite{Grasmair13}.
            
            If $Y=L^2(\Gamma)$ and $q=2$, then \eqref{SClin} becomes $$\exists\; p\de\in L^2(\Gamma), v\de\in X \text{ such that } A^*p\de\in\partial R(u\de) \text{ and } p\de=Av\de,$$ i.e. $$\operatorname{Ran } (A^*A)\cap\partial R(u\de)\neq\emptyset,$$ which is precisely the source condition from \cite{Resmerita05}. 
        \end{remark}
        \begin{remark}
            The conclusion of Theorem \ref{thscaling} also holds in the case when $F$ is not necessarily Fr\'echet differentiable, as we can replace the derivative with the subdifferential. 
        \end{remark}
    \end{proof}

 \section{Kullback-Leibler Fidelity} \label{KLfidelity}
    This section aims to obtain higher order convergence rates  when the data fidelity  is the Kullback-Leibler functional and a source condition for the dual problem holds. As discussed in the first section of \cite{Rockafellar71}, when dealing with optimization problems it might be more convenient to work in $L^{\infty}$ spaces due to the straightforwardness of proving  continuity with respect to the uniform norm, as compared to the other $p-$norms. However, a drawback of using this space is the difficulty of applying duality arguments, since the information available on $(L^\infty)^*$ spaces is limited. Nevertheless, the analysis can be facilitated by employing  the formula for the conjugate of an integral functional defined on $L^{\infty}$, provided by Theorem 1 in \cite{Rockafellar71}. 
    
    Prior to presenting the results, it is necessary to briefly recall the structure of $(L^{\infty})^*$, as outlined in the beginning of the second section of \cite{Rockafellar71}. Every functional $z^*\in (L^{\infty}(\Gamma))^*$ can be uniquely written as 
    \begin{equation}\label{decomp}
        z^*=z^*_a+z^*_s, 
    \end{equation} 
    where $z^*_a$ and $z^*_s$ are the "absolutely continuous" and "singular" components, respectively. Moreover, there exists $\overline{z}\in L^1(\Gamma)$ such that $z^*_a$ corresponds to $\overline{z}$, as in 
    \begin{equation*}
        \forall\; z\in L^{\infty}(\Gamma),\quad z^*_a(z)=\int\limits_{\Gamma}z(t)\overline{z}(t)dt.
    \end{equation*}

    \subsection{Kullback-Leibler fidelity - exact data} \label{KLfidelityexact}

    We assume the following: 
    \begin{enumerate}[start=1,label={(P\arabic*)}]
    \item The operator $A:L^1(\Omega)\rightarrow L^{\infty}(\Gamma)$ is linear, compact and ill-posed.\label{P1} 
    \item The function $f\in L^{\infty}_+(\Gamma)$ is bounded away from $0$ almost everywhere on $\Gamma$, i.e., \label{P3}
    \begin{equation*}
        \exists\; c_1,c_2>0 \text{ such that } c_1\leq f(t)\leq c_2 \text{ a.e. } 
    \end{equation*}
    \end{enumerate}

    We will study the case when $H_f(z)=KL(f,z)$ for $z\in L^{\infty}(\Gamma)$,  given by \eqref{KLdef}. We introduce a novel source condition, actually on the dual problem \eqref{dualpb}, as mentioned at the beginning of this section:
    \begin{equation}\label{SC3} 
    \exists\; p\de\in L^{\infty}(\Gamma), v\de\in L^1(\Omega) \text{ such that } A^*p\de\in\partial R(u\de)\text{ and } fp\de=Av\de. 
    \end{equation}

    We can make the following observation, similarly to what was discussed in \cite{Resmerita05}. While $A^*$ maps $(L^{\infty}(\Gamma))^*$ into $L^{\infty}(\Omega)$, it is important to highlight that the previous condition is more restrictive, in the sense that it requires $p\de$ to be in the smaller space $L^{\infty}(\Gamma)\subset(L^{\infty}(\Gamma))^*$. 

    In order to be able to give an explicit formulation for \eqref{dualpb}, we need to focus on finding $H_f^*$, which is quite a technical task.
    
     For a nonempty and convex set $C$, denote by $\delta_C$ the indicator function of $C$, that is $$\delta_C(x)=\begin{cases}
                                                                                         0, &\text{ if }x\in C,\\
                                                                                         +\infty, &\text{ otherwise.}
                                                                                  \end{cases} $$ 
    Its Fenchel conjugate is the so-called support function of $C$ (see e.g. Example 4.3, Chapter I.4  in \cite{Ekeland99}), $$\delta_C^*(x^*)=\sup\{x^*(x)\vert x\in C\}.$$ 
     Let us recall some results from \cite{Rockafellar71}, which will be employed in the subsequent study.
     Assume that $\varphi:\Gamma\times\mathbb{R}\rightarrow(-\infty,+\infty]$ and consider functionals of the form 
     \begin{equation}\label{introckafellar}
     I_{\varphi}(z)=\int\limits_{\Gamma}\varphi(t,z(t))dt, z\in L,    
     \end{equation}
      where $L$ is a linear space of measurable functions from $\Gamma$ to $\mathbb{R}$.
     Assume, as in the first section of \cite{Rockafellar71}, that $\varphi$ is a normal convex integrand, i.e.,
     \begin{enumerate}
         \item for each $t\in\Gamma$, the mapping $\tau\mapsto\varphi(t,\tau)$ is lower semicontinuous, convex, and not identically $+\infty$, 
         \item for each $t\in\Gamma$, the interior of $D(t)=\{\tau\in\mathbb{R} \;\vert\; \varphi(t,\tau)<+\infty\}$ is nonempty, and for each $\tau\in\mathbb{R}$, $\varphi(t,\tau)$ is measurable in $t$.
     \end{enumerate}
     \begin{theo}[{\cite[443]{Rockafellar71}}][Theorem 1]\label{theorockafellar}
     Consider the functional given by \eqref{introckafellar} such that $\varphi$ is a normal convex integrand, $\varphi(t,z(t))$ is majorized by a summable function of $t$ for at least some $z\in L^{\infty}(\Gamma)$, and that the Fenchel conjugate $\varphi^*(t,z^*(t))$ is majorized by a summable function of $t$ for at least one $z^*\in L^1(\Gamma)$. Then $I_{\varphi}$ is well-defined on $L^{\infty}(\Gamma)$, and the convex function $I_{\varphi}^*$ on $(L^{\infty}(\Gamma))^*$ is given by 
     \begin{equation}\label{conjint}
         \forall \; z^*\in (L^{\infty}(\Gamma))^*, \quad I_{\varphi}^*(z^*)=I_{\varphi^*}(\overline{z})+\delta_C^*(z_s^*),
     \end{equation}
     where $\overline{z}\in L^1(\Gamma)$ corresponds to the "absolutely continuous" component of $z^*$, $z_s^*$ is the "singular" component of $z^*$, and $$C=\{z\in L^{\infty}(\Gamma) \;\vert\; I_{\varphi}(z) < +\infty\}.$$
     \end{theo}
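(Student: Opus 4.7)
The plan is to compute $I_\varphi^*(z^*) = \sup_{z \in L^\infty(\Gamma)}\{z^*(z) - I_\varphi(z)\}$ by exploiting the decomposition $z^* = z^*_a + z^*_s$ from \eqref{decomp} and handling the two pieces separately. Before doing so, I would verify that $I_\varphi$ is well-defined (valued in $(-\infty,+\infty]$) on $L^\infty(\Gamma)$: Fenchel's inequality $\varphi(t,\tau) \geq z_0^*(t)\tau - \varphi^*(t, z_0^*(t))$ applied to the given $z_0^* \in L^1(\Gamma)$ with $\varphi^*(t, z_0^*(t))$ majorized by a summable function gives a summable lower bound for $\varphi(t, z(t))$ whenever $z \in L^\infty(\Gamma)$, so the integral makes sense. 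In particular, $C = \{z \in L^\infty(\Gamma) : I_\varphi(z) < +\infty\}$ is nonempty by the first integrability assumption on $\varphi$.

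For the absolutely continuous component, I would establish the identity
$$\sup_{z \in L^\infty(\Gamma)}\left\{\int_\Gamma z(t)\bar z(t)\,dt - I_\varphi(z)\right\} = \int_\Gamma \varphi^*(t,\bar z(t))\,dt = I_{\varphi^*}(\bar z).$$
The inequality ``$\leq$'' is immediate from pointwise Fenchel duality and integration. The reverse inequality requires interchanging supremum and integral: for each $t$ and each $\varepsilon > 0$, I would select a near-maximiser $\tau^*(t)$ of $\tau \mapsto \tau\bar z(t) - \varphi(t,\tau)$. Normality of $\varphi$ supplies a measurable such selection (this is where the normal-integrand hypothesis enters), which then has to be truncated to land in $L^\infty(\Gamma)$. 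The truncation is harmless because the interior of $D(t)$ is nonempty and one can approximate $\tau^*(t)$ by values $\tau^*(t)\mathbf{1}_{\{|\tau^*| \leq n\}} + \tau_0(t)\mathbf{1}_{\{|\tau^*| > n\}}$ for a fixed measurable interior selection $\tau_0$, losing only an arbitrarily small amount in the integral by monotone/dominated convergence.

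For the singular component, I would exploit the characteristic property of $z_s^*$ in the Yosida--Hewitt decomposition: $z_s^*$ is concentrated on a sequence of sets of arbitrarily small measure, so it annihilates any bounded function supported away from such sets. Given any $z \in C$ and any $w \in L^\infty(\Gamma)$, one can form competitors $z + w\,\mathbf{1}_{E_n}$ on exhausting sets $E_n$ where the combined function remains in $C$, extracting a $\delta_C^*(z_s^*)$ contribution while the absolutely continuous calculation is essentially unaffected. This gives
$$\sup_{z \in L^\infty(\Gamma)}\{z_s^*(z) - I_\varphi(z)\} = \delta_C^*(z_s^*).$$

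The main obstacle is exactly this decoupling: constructing a single sequence of competitors $z_n \in L^\infty(\Gamma)$ that \emph{simultaneously} approximates the pointwise supremum in the absolutely continuous computation and nearly attains the support function value for the singular part. This is where the interplay between measurable selection (needed for the $\bar z$ term) and the concentration of $z_s^*$ on negligible sets (needed to produce the support-function term without perturbing $\bar z$'s contribution) becomes delicate. Once the two contributions are separated, summing gives the identity \eqref{conjint}. Since this is a classical result, I would invoke \cite{Rockafellar71} directly rather than reproduce all the measurable-selection technicalities.
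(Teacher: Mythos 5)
This statement is a classical result quoted verbatim from Rockafellar's 1971 paper (Theorem 1 there); the present paper gives no proof of it, and your sketch is a faithful outline of Rockafellar's original argument — Fenchel's inequality for well-definedness, the measurable-selection/decomposable-space duality for the absolutely continuous part, and the concentration of the singular functional on sets of vanishing measure for the $\delta_C^*$ term — ending, as the paper does, by deferring to \cite{Rockafellar71}. The only imprecision is in the singular-part competitors: one patches $u\,\mathbf{1}_{\Gamma\setminus E_n}+w\,\mathbf{1}_{E_n}$ with $w\in C$ nearly attaining the support function (rather than adding $w\,\mathbf{1}_{E_n}$ to $z$), so that $I_\varphi(z_n)$ stays finite and converges to $I_\varphi(u)$ while $z_s^*(z_n)=z_s^*(w)$, but this is exactly the delicacy you already flag.
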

     \begin{corol}[{\cite[445]{Rockafellar71}}][Corollary 1A]\label{cor1rockafellar}
     Assume, in addition to the hypothesis of Theorem 1, that the convex set $C$ is a cone. Then, in formula \eqref{conjint}, one has  
     \begin{equation*}
         I_{\varphi}^*(z^*)=\begin{cases}
                                I_{\varphi^*}(\overline{z}), & \text{ if }\;\forall\; z\in C,\quad z^*_s(\overline{z})\leq0,\\
                                +\infty, & \text{ otherwise.}
                            \end{cases}
     \end{equation*}
     \end{corol}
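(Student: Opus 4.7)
The plan is to reduce the statement to a direct analysis of the support function $\delta_C^*$ that already appears in the general formula \eqref{conjint} from Theorem \ref{theorockafellar}. Since that theorem supplies $I_\varphi^*(z^*) = I_{\varphi^*}(\overline{z}) + \delta_C^*(z_s^*)$ with no extra work, the only task is to evaluate $\delta_C^*(z_s^*) = \sup\{z_s^*(z) : z \in C\}$ under the additional hypothesis that the effective domain $C$ is a convex cone, and then to substitute back.

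First I would exploit the positive homogeneity of $C$. For any $z \in C$ and any $\lambda > 0$ we have $\lambda z \in C$, so $\lambda z_s^*(z) = z_s^*(\lambda z)$ is a value entering the supremum. If there exists a single $z_0 \in C$ with $z_s^*(z_0) > 0$, then letting $\lambda \to +\infty$ forces $\delta_C^*(z_s^*) = +\infty$. Conversely, assume $z_s^*(z) \leq 0$ for every $z \in C$. Since $C$ is a cone it contains $0$ (either by convention, or by taking $\lambda \to 0^+$ from any element), so the supremum is bounded below by $z_s^*(0) = 0$; combined with the upper bound $0$ this yields $\delta_C^*(z_s^*) = 0$. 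Plugging these two outcomes into \eqref{conjint} produces exactly the dichotomy claimed by the corollary.

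The main obstacle, though rather mild, is interpreting the hypothesis "$z^*_s(\overline{z})\leq 0$ for all $z\in C$" correctly: read literally it refers to the fixed element $\overline{z}$, which appears to be a typographical slip for the dummy variable $z$ ranging over $C$. Under the intended reading (universal quantification over $z\in C$), the argument above goes through verbatim. A secondary subtlety worth checking is that the cone $C$ genuinely contains the origin; this is automatic if one adopts the convention that a convex cone is closed under nonnegative scalar multiplication, and in the integrand setting it is also consistent with $\varphi(t,0)$ being a legitimate value of the integrand for almost every $t$. Once this bookkeeping is settled, no further estimates are needed beyond the one-line reduction to Theorem \ref{theorockafellar}.
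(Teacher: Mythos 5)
This statement is quoted verbatim from Rockafellar's 1971 paper (Corollary 1A, p.~445) and the present paper gives no proof of it, so there is nothing internal to compare against; your argument is the standard one and is essentially correct: the support function of a convex cone takes only the values $0$ and $+\infty$, according to whether the functional is nonpositive on the cone or not, and substituting this into \eqref{conjint} gives the stated dichotomy. You are also right that the displayed condition ``$z^*_s(\overline{z})\leq 0$'' is a typographical slip for ``$z^*_s(z)\leq 0$'' with $z$ ranging over $C$.

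One small correction: a convex cone in this context need not contain the origin, and in the paper's own application it does not --- for the Kullback--Leibler integrand $\varphi_f$ one has $\varphi_f(t,0)=+\infty$, so $0\notin C=\{z: I_{\varphi}(z)<+\infty\}$. Your parenthetical fallback is the right fix: for any fixed $z_0\in C$ and $\lambda>0$ one has $\lambda z_0\in C$ and $z^*_s(\lambda z_0)=\lambda z^*_s(z_0)\to 0$ as $\lambda\to 0^+$, so $\delta_C^*(z^*_s)\geq 0$ without ever needing $0\in C$; combined with the upper bound this gives $\delta_C^*(z^*_s)=0$. You should also note, for the ``otherwise'' branch, that $I_{\varphi^*}(\overline{z})>-\infty$ (which follows from the majorization hypotheses of Theorem 1), so that the sum $I_{\varphi^*}(\overline{z})+\delta_C^*(z^*_s)$ is genuinely $+\infty$ and not an indeterminate form. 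With these two bookkeeping points settled, the proof is complete.
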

     \begin{corol}[{\cite[449]{Rockafellar71}}][Corollary 2C]\label{cor2rockafellar}
         Assume that there exists a function $\overline{u}\in L^{\infty}(\Gamma)$ and $r>0$ such that $$\forall\; x\in\mathbb{R} \text{ with } \vert x\vert<r,\; \varphi(t,\overline{u}(t)+x)\in L^1(\Gamma).$$ Then, for every such $\overline{u}$, a functional $z^*\in (L^{\infty}(\Gamma))^*$ is in $\partial I_{\varphi}(\overline{u})$ if and only if the "singular" component of $z^*$, namely $z_s^*$, vanishes, and $\overline{z} \in L^1(\Gamma)$ corresponding to $z_a^*$ satisfies $$\text{for almost every }t\in\Gamma, \overline{z}(t)\in\partial\varphi(t,\cdot)(\overline{u}(t)).$$
     \end{corol}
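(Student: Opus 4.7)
The plan is to prove the two directions separately; the reverse implication is elementary, while the forward implication requires two steps, the first of which (vanishing of the singular part) is the main technical obstacle. Assume first that $z_s^\ast=0$ and $\overline z(t)\in\partial\varphi(t,\cdot)(\overline u(t))$ a.e. Then the pointwise subgradient inequality $\varphi(t,u(t))\ge\varphi(t,\overline u(t))+\overline z(t)(u(t)-\overline u(t))$ holds almost everywhere for every $u\in L^\infty(\Gamma)$; integrating over $\Gamma$ and using $z^\ast(u-\overline u)=\int_\Gamma\overline z(u-\overline u)\,dt$ (valid because $z_s^\ast=0$), one obtains $I_\varphi(u)-I_\varphi(\overline u)\ge z^\ast(u-\overline u)$, i.e.\ $z^\ast\in\partial I_\varphi(\overline u)$.

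For the forward direction, the key preparatory observation is a uniform $L^1$ bound on small perturbations: by convexity of $\varphi(t,\cdot)$,
\[
\varphi(t,\overline u(t)+x)\le\max\bigl\{\varphi(t,\overline u(t)+r/2),\,\varphi(t,\overline u(t)-r/2)\bigr\}\quad\text{for }|x|\le r/2,
\]
and both functions on the right-hand side are summable by hypothesis. Dominated convergence then yields continuity of $I_\varphi$ at $\overline u$ in the $L^\infty$ norm and, more importantly, shows that $\int_E\bigl[\varphi(t,\overline u+v)-\varphi(t,\overline u)\bigr]\,dt\to 0$ whenever $\|v\|_\infty\le r/2$ and $|E|\to 0$.

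To kill the singular component I would invoke the Yosida--Hewitt structure: the purely finitely additive functional $z_s^\ast$ admits, for every $n$, a measurable set $F_n$ with $|F_n|\to 0$ and $|z_s^\ast|(\Gamma\setminus F_n)\to 0$. For fixed $v\in L^\infty(\Gamma)$ with $\|v\|_\infty\le r/2$, substituting $u=\overline u\pm v\chi_{F_n}$ into the subgradient inequality gives
\[
\pm z^\ast(v\chi_{F_n})\le\int_{F_n}\bigl[\varphi(t,\overline u\pm v)-\varphi(t,\overline u)\bigr]\,dt\to 0.
\]
Since $z_a^\ast(v\chi_{F_n})=\int_{F_n}\overline z\,v\,dt\to 0$ by absolute continuity of the Lebesgue integral and $z_s^\ast(v\chi_{F_n})=z_s^\ast(v)-z_s^\ast(v\chi_{\Gamma\setminus F_n})\to z_s^\ast(v)$ by the concentration property, passing to the limit yields $\pm z_s^\ast(v)\le 0$, hence $z_s^\ast(v)=0$; positive homogeneity then forces $z_s^\ast\equiv 0$. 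This is the step I expect to require the most care, as it hinges on the fine structure of $(L^\infty)^\ast$ rather than on elementary convex analysis.

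With $z_s^\ast=0$ the pointwise subgradient condition is extracted by localisation: plug $u=\overline u+h\chi_E$ into the global inequality $\int_\Gamma\overline z(u-\overline u)\,dt\le I_\varphi(u)-I_\varphi(\overline u)$ for arbitrary measurable $E\subset\Gamma$ and arbitrary $h\in\mathbb R$, which yields $h\overline z(t)\le\varphi(t,\overline u(t)+h)-\varphi(t,\overline u(t))$ a.e.\ on $\Gamma$. Running this over a countable dense set of values $h$ on a common full-measure set and then extending to all $h\in\mathbb R$ by the lower semicontinuity of $\varphi(t,\cdot)$ delivers precisely $\overline z(t)\in\partial\varphi(t,\cdot)(\overline u(t))$ almost everywhere, completing the proof.
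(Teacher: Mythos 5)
This statement is Rockafellar's Corollary 2C, which the paper imports with a citation and does not prove, so there is no in-paper argument to compare against; the relevant benchmark is Rockafellar's own derivation. Your proof is correct, but it takes a genuinely different route. Rockafellar obtains the corollary from the conjugacy formula $I_{\varphi}^*(z^*)=I_{\varphi^*}(\overline{z})+\delta_C^*(z^*_s)$ (Theorem 1 of his paper, restated here as Theorem \ref{theorockafellar}) combined with the Fenchel-equality characterization $z^*\in\partial I_{\varphi}(\overline{u})\Leftrightarrow I_{\varphi}(\overline{u})+I_{\varphi}^*(z^*)=z^*(\overline{u})$: the hypothesis that an $L^\infty$-ball around $\overline{u}$ lies in $C$ forces $\delta_C^*(z^*_s)\geq z^*_s(\overline{u})+r\Vert z^*_s\Vert$, so equality kills the singular part, and the pointwise Young inequality then forces pointwise Fenchel equality, i.e.\ the pointwise subgradient condition. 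Your argument is a direct, first-principles alternative: the Yosida--Hewitt concentration of the purely finitely additive part on small sets, combined with the uniform integrability you establish from the convexity bound $\varphi(t,\overline{u}(t)+x)\leq\max\{\varphi(t,\overline{u}(t)+r/2),\varphi(t,\overline{u}(t)-r/2)\}$, annihilates $z^*_s$ without ever computing $I_\varphi^*$; the localization with $u=\overline{u}+h\chi_E$ then recovers the pointwise condition. What the conjugacy route buys is brevity given that Theorem \ref{theorockafellar} is already in hand; what yours buys is independence from that theorem and a more transparent view of why the interiority hypothesis is what kills the singular component.

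One small repair: in the final step, passing from the countable dense set of $h$ to all $h\in(-r,r)$ should be justified by the continuity of the finite convex function $\varphi(t,\cdot)$ on the open interval $(\overline{u}(t)-r,\overline{u}(t)+r)\subset D(t)$ (which holds a.e.\ by your hypothesis and convexity of $D(t)$), not by lower semicontinuity --- lsc gives $\liminf_n\varphi(t,\overline{u}(t)+h_n)\geq\varphi(t,\overline{u}(t)+h)$, which is the wrong direction for preserving the inequality in the limit. The subsequent extension from $|h|<r$ to all $h\in\mathbb{R}$ is then immediate from the monotonicity of the difference quotients of a convex function. Neither point affects the validity of the overall strategy.
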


    The following lemma gives a formula for the conjugate of the Kullback-Leibler divergence regarded as a function of the second variable on $L^{\infty}(\Gamma)$.
    \begin{lemma}\label{conjlemma}
        Consider the function $H_f:L^{\infty}(\Gamma)\rightarrow[0,+\infty]$ given by 
        \begin{gather*}
            H_f(z)=
            \begin{cases}
                \int\limits_{\Gamma}\varphi_f(t,z(t)) dt, & \text{ if } \varphi_f(\cdot,z(\cdot))\in L^1(\Gamma),\\
                +\infty, & \text{ otherwise},
            \end{cases} 
        \end{gather*}
        with $\varphi_f:\Gamma\times\mathbb{R}\rightarrow(-\infty,+\infty],$
         \begin{equation}\label{phi}
            \varphi_f(t,\tau)= \begin{cases}
                        f(t)\text{ ln }\frac{f(t)}{\tau}-f(t)+\tau, & \text{ if }\tau\in(0,+\infty),\\
                        +\infty, & \text{ otherwise.}
                        \end{cases}
        \end{equation}
        Then the Fenchel conjugate of $H_f$ is  
        \begin{gather}\label{conjKL}
            H_f^*:(L^{\infty}(\Gamma))^*\rightarrow\mathbb{R}\cup\{+\infty\},  H_f^*(z^*)= \int\limits_{\Gamma}{\varphi_f^*(t,\overline{z}(t))} +\delta^*_{C}(z^*_s) = -\int\limits_{\Gamma}{f(t)\text{ ln}\left(1-\overline{z}(t)\right)dt}+\delta^*_{C}(z^*_s),
        \end{gather}
        where $\overline{z}\in L^1(\Gamma)$ corresponds to the "absolutely continuous" component of $z^*$, $z^*_s$ is the "singular" component of $z^*$, and $C$ is the set defined by
        \begin{equation}\label{setC}
            C=\{z\in L^{\infty}(\Gamma)\;\vert \;H_f(z)<+\infty\}.
        \end{equation}        
    \end{lemma}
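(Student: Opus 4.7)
The plan is to apply Theorem~\ref{theorockafellar} to the integrand $\varphi_f$, which reduces the computation of $H_f^*$ to a pointwise computation of the Fenchel conjugate $\varphi_f^*(t,\cdot)$ of a one-variable convex function on $\mathbb{R}$. Once the right-hand side of \eqref{conjint} is written out, the lemma follows immediately provided I can show the explicit identity $\varphi_f^*(t,\tau)=-f(t)\ln(1-\tau)$ for $\tau<1$.

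First, I would verify that $\varphi_f$ is a normal convex integrand in the sense used by Rockafellar. For each $t\in\Gamma$, the map $\tau\mapsto\varphi_f(t,\tau)$ is the standard convex function $\tau\mapsto f(t)\ln(f(t)/\tau)-f(t)+\tau$ on $(0,+\infty)$, which is lower semicontinuous and convex (its second derivative with respect to $\tau$ equals $f(t)/\tau^2\geq 0$), and its effective domain $(0,+\infty)$ has nonempty interior by \ref{P3}. Joint measurability in $t$ follows because $f\in L^\infty_+(\Gamma)$ makes each building block measurable. For the two summability conditions of Theorem~\ref{theorockafellar}, I would choose $z\equiv f$ as the test function in $L^\infty(\Gamma)$: then $\varphi_f(t,f(t))=0$ is trivially summable; and I would take $z^*\equiv 0\in L^1(\Gamma)$, for which $\varphi_f^*(t,0)=-f(t)\ln 1=0$ is also summable, once the conjugate formula below is established.

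Next, I would compute $\varphi_f^*(t,\tau)$ by solving $\sup_{s>0}\{\tau s-\varphi_f(t,s)\}$. Writing the objective as $\tau s-f(t)\ln f(t)+f(t)\ln s+f(t)-s$ and differentiating in $s$ yields the critical equation $\tau-1+f(t)/s=0$, hence $s^\star=f(t)/(1-\tau)$, which is admissible precisely when $\tau<1$. Substituting back and simplifying, the linear-in-$s$ terms cancel and one is left with $\varphi_f^*(t,\tau)=-f(t)\ln(1-\tau)$ for $\tau<1$; for $\tau\geq 1$ the supremum is $+\infty$ by direct inspection ($s\to\infty$). Plugging this into formula \eqref{conjint} of Theorem~\ref{theorockafellar}, together with the decomposition $z^*=z_a^*+z_s^*$ from \eqref{decomp}, delivers the stated expression
\begin{equation*}
H_f^*(z^*)=-\int_\Gamma f(t)\ln\bigl(1-\overline{z}(t)\bigr)\,dt+\delta_C^*(z_s^*),
\end{equation*}
with the convention that the integral is $+\infty$ when $\overline z\geq 1$ on a set of positive measure, and with $C$ defined by \eqref{setC}.

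The main obstacle I anticipate is the careful verification of the hypotheses of Theorem~\ref{theorockafellar} — in particular, ensuring that the two summability requirements on $\varphi_f$ and $\varphi_f^*$ are met in a way that is not circular, since the summability hypothesis on $\varphi_f^*$ formally uses the conjugate formula one is trying to derive. I would circumvent this by computing $\varphi_f^*(t,0)$ directly from its definition before invoking the full formula, and by using the boundedness of $f$ from \ref{P3} to control $\int_\Gamma \varphi_f(t,f(t))\,dt=0$. The rest is routine algebra.
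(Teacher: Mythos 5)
Your proposal is correct and follows essentially the same route as the paper: both verify the hypotheses of Theorem \ref{theorockafellar} using the test functions $z\equiv f$ (so that $\varphi_f(t,f(t))=0$ is summable) and $z^*\equiv 0$ (so that $\varphi_f^*(t,0)=0$ is summable), compute the pointwise conjugate $\varphi_f^*(t,\tau^*)=-f(t)\ln(1-\tau^*)$ from the critical point $\tau=f(t)/(1-\tau^*)$ for $\tau^*<1$ with the supremum being $+\infty$ otherwise, and then read off \eqref{conjKL} from \eqref{conjint}. Your explicit handling of the apparent circularity in the summability hypothesis is a slightly more careful articulation of what the paper does implicitly, but it is not a different argument.
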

    \begin{proof}
        Theorem \ref{theorockafellar} will be employed to determine the Fenchel conjugate of $H_f$. To this end, we need to prove that the hypotheses hold. 
        Firstly, the mapping $\tau\mapsto \varphi_f(t,\tau)$ is convex, and one can choose $f\in L^{\infty}(\Gamma)$ such that $\varphi_f(t,f(t))$ (which vanishes) is majorized by a summable function of $t$.
        
        Moving on to the Fenchel conjugate of $\tau\mapsto\varphi_f(t,\tau)$, it will be calculated using the definition:
        \begin{gather*}
            \varphi_f^*(t,\tau^*)=\sup_{\tau\in(0,+\infty)}\{\tau^*\tau-\varphi_f(t,\tau)\} =\sup_{\tau\in(0,+\infty)}\left\{\tau^*\tau-f(t)\text{ ln }f(t)+f(t)\text{ ln }\tau+f(t)-\tau\right\}.
        \end{gather*}
             If $\tau^*<1$, then the supremum is attained for $\tau=\frac{f(t)}{1-\tau^*}>0$. Otherwise, the mapping $\tau\mapsto \tau^*\tau-f(t)\text{ ln }f(t)+f(t)\text{ ln }\tau+f(t)-\tau$ is strictly increasing with 
             \begin{equation*}
                 \lim_{\tau\rightarrow+\infty}(\tau^*\tau-f(t)\text{ ln }f(t)+f(t)\text{ ln }\tau+f(t)-\tau)=+\infty.
             \end{equation*}
        Hence, for any $\tau^*<1,\; \varphi_f^*(t,\tau^*)=-f(t)\text{ ln }(1-\tau^*)$.
        
        Secondly, it is easy to see that the constant mapping $t\mapsto z_0(t)=0$ is in $L^1(\Gamma)$, and $\varphi_f^*(t,z_0(t))=0$ can be majorized by a function in $L^1(\Gamma)$ as well, implying that the hypotheses of the above-mentioned theorem hold. From this, one gets formula \eqref{conjKL} for the conjugate.
    \end{proof}
    
        Before deriving convergence rates in the Poisson noise framework, we express \eqref{dualpb} based on the considerations above. Consequently, the dual problem is the following,
    \begin{equation}\label{formdual}
        -\min\limits_{p\in (L^{\infty}(\Gamma))^*}\left\{-\frac{1}{\alpha}\int\limits_{\Gamma}{f(t)\text{ ln}\left[1+\alpha\overline{p}(t)\right]dt}+\frac{1}{\alpha}\delta^*_{C}(-\alpha p_s)+R^* (A^* p)\right\},
    \end{equation}
    where $\overline{p}\in L^1(\Gamma)$ corresponds to the "absolutely continuous" component of $p$ and $p_s$ is the "singular" component of $p$.

    We  focus on determining the subdifferential of $H_f$ at the points $z\in E$, where the set $E$ is given by \eqref{defE1}.
    \begin{prop}\label{propsubdiffconj}
        Let $z\in E$. Then the subdifferential of $H_f$ at $z$ is given by
        \begin{equation}\label{subdifffc}
            \partial H_f (z)=\left\{1-\frac{f}{z}\right\}.
        \end{equation}
    \end{prop}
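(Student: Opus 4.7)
The plan is to apply Corollary \ref{cor2rockafellar} (Rockafellar's Corollary 2C) with the choice $\overline{u}=z$. That corollary reduces computing the subdifferential of the integral functional $H_f$ at a given point to two tasks: verifying that there is some uniform margin $r>0$ around $z$ on which the integrand remains summable, and computing the pointwise subdifferential $\partial_\tau \varphi_f(t,\cdot)(z(t))$. If both succeed, the corollary will force the singular component to vanish and identify the absolutely continuous part with the pointwise derivative, yielding a singleton.

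First I would verify the hypothesis. Since $z\in E$, there exist constants $0<a\le b<+\infty$ with $a\le z(t)\le b$ a.e., so picking $r=a/2$ guarantees $z(t)+x\in[a/2,\,b+a/2]$ for all $|x|<r$, ensuring $\varphi_f(t,z(t)+x)<+\infty$ a.e. To obtain summability one expands
\begin{equation*}
\varphi_f(t,z(t)+x)=f(t)\ln f(t)-f(t)\ln(z(t)+x)-f(t)+z(t)+x,
\end{equation*}
and invokes assumption \ref{P3} (so that $f\in L^\infty$ and $\ln f\in L^\infty$) together with the bounds on $z(t)+x$ (so $\ln(z(t)+x)\in L^\infty$) and the boundedness of $\Gamma$. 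Each term then lies in $L^\infty(\Gamma)\subset L^1(\Gamma)$, so the hypothesis of Corollary \ref{cor2rockafellar} is met.

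Next I would compute the pointwise subdifferential. For fixed $t$ the function $\tau\mapsto\varphi_f(t,\tau)$ is differentiable on the open interior $(0,+\infty)$ of its domain, with derivative $1-f(t)/\tau$, so $\partial\varphi_f(t,\cdot)(\tau)=\{1-f(t)/\tau\}$ for $\tau>0$. Since $z(t)\in[a,b]\subset(0,+\infty)$ a.e., this gives $\partial\varphi_f(t,\cdot)(z(t))=\{1-f(t)/z(t)\}$ a.e. The corollary then says that $z^*\in\partial H_f(z)$ forces $z^*_s=0$ and $\overline{z}(t)=1-f(t)/z(t)$ a.e., and this function lies in $L^\infty(\Gamma)\subset L^1(\Gamma)$ because $f\in L^\infty$ and $z$ is bounded away from zero. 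Conversely the same function is readily seen to be a subgradient, yielding \eqref{subdifffc}.

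The only mildly delicate step is the summability check in the verification of Corollary \ref{cor2rockafellar}, which is precisely where assumption \ref{P3} (both the upper bound on $f$ and, implicitly, the lower bound keeping $\ln f$ finite) and the definition of $E$ (the lower bound on $z$) come into play; the rest of the argument is a routine differentiation.
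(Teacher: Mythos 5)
Your proposal is correct and follows essentially the same route as the paper: invoke Corollary \ref{cor2rockafellar} at $\overline{u}=z$ with the margin $r=\frac{1}{2}\operatorname{essinf}(z)$, check summability of $\varphi_f(t,z(t)+x)$ via the bounds from \ref{P3} and the definition of $E$, and identify the subgradient with the pointwise derivative $1-f/z$. The extra details you supply (the expansion of the integrand and the explicit $L^\infty\subset L^1$ observation) are consistent with, and slightly more explicit than, the paper's argument.
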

    \begin{proof}
        In order to find the subdifferential of the function $H_f$, Corollary \ref{cor2rockafellar}  will be employed. From \eqref{decomp},  one gets that any $v\in (L^{\infty}(\Gamma))^*$ has the form $v=v_a+v_s$, where $v_a$ and $v_s$ are the "absolutely continuous" component and the "singular" component, respectively. 
        Verifying the  corollary's hypothesis amounts to showing that there exist $\overline{u}\in L^{\infty}(\Gamma)$ and $r>0$ such that $$\forall x\in\mathbb{R} \text{ with } \vert x \vert<r,\quad \varphi_f(t,\overline{u}(t)+x)\in L^1(\Gamma).$$ If this holds, then $v\in\partial H_f(\overline{u})$ if and only if $v_s=0$, and the element $v_a$ is $$\forall h\in L^{\infty}(\Gamma), \quad v_a(h)=\int\limits_{\Gamma}{(\varphi_f(t,\cdot))'(\overline{u}(t))h(t)dt}.$$ 
        
        Specifically, we claim that, for any fixed $z\in E$, the following holds, $$\forall x\in\mathbb{R}, \text{ with } \vert x \vert<\frac{1}{2}\cdot\operatorname{essinf}(z), \quad t\mapsto\varphi_f(t,z(t)+x)\in L^1(\Gamma).$$
        Indeed, for such $x$ and any $t\in\Gamma$,
        \begin{equation}\label{ineqinf}
            z(t)+x\geq\operatorname{essinf} (z)-\frac{1}{2}\cdot\operatorname{essinf}(z)=\frac{1}{2}\cdot\operatorname{essinf}(z)>0,
        \end{equation}
        where the last inequality holds due to the assumption $z\in E$. In addition to that, $z(\cdot)+x\in L^{\infty}(\Gamma)$ which,  in conjunction with \eqref{ineqinf}, leads to $z(\cdot)+x\in E$. 
        Consequently,
        $$\varphi_f(t,z(t)+x)=f(t)\text{ ln }\frac{f(t)}{z(t)+x}-f(t)+z(t)+x\in L^1(\Gamma).$$
        By employing the aforementioned result, one finds that for a $v\in (L^{\infty}(\Gamma))^*$, the following holds,
        \begin{equation*}
          v\in\partial H_f(z) \Leftrightarrow \forall h \in L^{\infty}(\Gamma),\quad v(h)=\int\limits_{\Gamma}{\left(1-\frac{f(t)}{z(t)}\right)h(t)dt}.
        \end{equation*} 
    \end{proof}

    \begin{remark}\label{remarkderiv}
        Under the assumptions of the previous result, one can also deduce that $H_f$ is G\^{a}teaux differentiable at $z\in E$ (see also \eqref{E1int}) and $H_f'(z)= 1-\frac{f}{z}$. Indeed, $z\in\operatorname{dom }H_f$ and $H_f$ is continuous at $z$ with respect to the topology induced by $\Vert\cdot\Vert_{\infty}$, so one can employ Proposition 5.3, p.23 from Chapter I.5 in \cite{Ekeland99}.
    \end{remark}

    The following proposition provides a subgradient of $H_f^*$ at each $z^*\in L^{\infty}(\Gamma)$ having the property that $1-z^*\in E$.
    \begin{prop}\label{prop2}
        Let $z^*\in L^{\infty}(\Gamma)$ such that $1-z^*\in E$. Then
        \begin{equation}\label{subdiffconj}
            \Phi\left(\frac{f}{1-z^*}\right)\in\partial H_f^*(z^*)
        \end{equation}
        holds, where $\Phi:L^{\infty}(\Gamma)\rightarrow(L^{\infty}(\Gamma))^{**}$ denotes the canonical embedding of $L^{\infty}(\Gamma)$ in its bidual.
    \end{prop}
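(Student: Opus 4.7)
My plan is to reduce the claim to Proposition~\ref{propsubdiffconj} via the primal-dual reciprocity of subdifferentials stated in Theorem~\ref{lemabenninggen}. I would set $u := f/(1-z^*)$ and aim to show that $z^* \in \partial H_f(u)$; once this is established, Theorem~\ref{lemabenninggen} applied with $G = H_f$ on $L^\infty(\Gamma)$ delivers $u \in \partial H_f^*(z^*)$, which under the canonical embedding $\Phi$ reads $\Phi(u) \in \partial H_f^*(z^*)$, exactly the desired conclusion.

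The first concrete step is to verify that $u$ belongs to the set $E$ of \eqref{defE1}, so that Proposition~\ref{propsubdiffconj} is applicable to $u$. Since $1-z^* \in E$, there exist constants $0 < a \le b < +\infty$ with $a \le 1-z^* \le b$ almost everywhere, and assumption \ref{P3} provides $c_1 \le f \le c_2$ a.e. Combining the two sets of bounds yields $c_1/b \le u \le c_2/a$ a.e., which places $u \in E$. Then Proposition~\ref{propsubdiffconj} produces $\partial H_f(u) = \{1 - f/u\}$, and a direct computation gives $1 - f/u = 1 - (1-z^*) = z^*$. In this identification, $z^* \in L^\infty(\Gamma)$ is regarded as the absolutely continuous functional in $(L^\infty(\Gamma))^*$ whose singular part vanishes and whose density is $z^*$ itself, acting on $L^\infty(\Gamma)$ by integration over $\Gamma$.

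What remains is the straightforward invocation of Theorem~\ref{lemabenninggen} with $G = H_f$, which is proper, convex and lower semicontinuous on $L^\infty(\Gamma)$, to turn $z^* \in \partial H_f(u)$ into $u \in \partial H_f^*(z^*)$, and hence into $\Phi(u) \in \partial H_f^*(z^*)$ through the canonical embedding. The subtlety I expect to navigate carefully is the bookkeeping across the three spaces $L^\infty(\Gamma)$, $(L^\infty(\Gamma))^*$ and $(L^\infty(\Gamma))^{**}$: one must explicitly view $z^* \in L^\infty(\Gamma)$ as a functional in $(L^\infty(\Gamma))^*$ for $\partial H_f^*(z^*)$ to make sense, and then line up the output of Theorem~\ref{lemabenninggen} with the range of $\Phi$ so as to match the statement as written. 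Once these identifications are made transparent, the proof is essentially a one-line combination of Proposition~\ref{propsubdiffconj} and Theorem~\ref{lemabenninggen}; no further computation is required beyond the explicit formula for $u$.
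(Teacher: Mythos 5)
Your argument is correct, and it takes a genuinely different route from the paper. The paper proves the proposition by brute force: it fixes an arbitrary $x^*\in\operatorname{dom}H_f^*$, splits it into its absolutely continuous and singular parts via \eqref{decomp}, invokes the explicit conjugate formula of Lemma \ref{conjlemma}, uses the gradient inequality for $w\mapsto-\ln w$ together with Corollary \ref{cor1rockafellar} (to guarantee $1-\overline{x}>0$ a.e.), and controls the singular contribution by the observation that $u=f/(1-z^*)\in C$ implies $x^*_s(u)\leq\delta_C^*(x^*_s)$ --- all in order to verify the subgradient inequality \eqref{ineqsubdiff} directly. You instead reduce everything to the already-established Proposition \ref{propsubdiffconj} plus the inversion rule of Theorem \ref{lemabenninggen}: your verification that $u=f/(1-z^*)\in E$ (using \ref{P3} and $1-z^*\in E$) is sound, the computation $1-f/u=z^*$ correctly identifies $\partial H_f(u)=\{z^*\}$ with $z^*$ viewed as the absolutely continuous functional with vanishing singular part, and the implication $z^*\in\partial H_f(u)\Rightarrow\Phi(u)\in\partial H_f^*(z^*)$ is exactly the forward direction of Theorem \ref{lemabenninggen} (which, incidentally, follows from the Fenchel--Young equality alone and does not even require lower semicontinuity of $H_f$, so the hypotheses are comfortably met). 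What your route buys is brevity and the avoidance of any direct handling of singular functionals or of the explicit formula for $H_f^*$; what the paper's direct computation buys is a self-contained argument that makes visible how a general test functional's singular component is absorbed by the support function $\delta_C^*$, a mechanism reused later in the noisy-data section. Since Proposition \ref{propsubdiffconj} and Theorem \ref{lemabenninggen} both precede the statement, there is no circularity, and your proof is complete as written.
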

    \begin{proof}
       Since $z^*\in L^{\infty}(\Gamma)$, then $z^*\in L^1(\Gamma)$ and, due to the uniqueness of  decomposition \eqref{decomp}, it can be regarded as an element of $(L^{\infty}(\Gamma))^*$ for which the singular component vanishes. Denote by $u:=\frac{f}{1-z^*}$, which is in $L^{\infty}(\Gamma)$ due to $1-z^*\in E$, and let $\zeta:=\Phi(u)\in (L^{\infty}(\Gamma))^{**}$. To establish the conclusion, it is enough to demonstrate that the following inequality holds,
       \begin{equation}\label{ineqsubdiff}
           \forall \;x^*\in\operatorname{ dom }H_f^*,\quad \langle\zeta,x^*-z^*\rangle\leq H_f^*(x^*)-H_f^*(z^*).
       \end{equation}
       
       Let us fix an $x^*\in\operatorname{ dom }H_f^*\subset (L^{\infty}(\Gamma))^*$ and consider its unique decomposition given by \eqref{decomp}, 
       \begin{equation*}
           x^*=x^*_a+x^*_s.
       \end{equation*}
       As mentioned earlier, there exists $\overline{x}\in L^1(\Gamma)$ such that $x^*_a$ corresponds to $\overline{x}$, as in 
    \begin{equation}\label{eq01}
        \forall y\in L^{\infty}(\Gamma), \quad x^*_a(y)=\int\limits_{\Gamma}y(t)\overline{x}(t)dt.
    \end{equation}
    With this decomposition and \eqref{conjKL}, one gets $$H_f^*(x^*)= \int\limits_{\Gamma}{\varphi_f^*(t,\overline{x}(t))} +\delta^*_{C}(x^*_s) = -\int\limits_{\Gamma}{f(t)\text{ ln}\left(1-\overline{x}(t)\right)dt}+\delta^*_{C}(x^*_s).$$
    We begin by evaluating the left-hand side of \eqref{ineqsubdiff},
    \begin{align}
        \langle\zeta,x^*-z^*\rangle &= \langle\Phi(u),x^*_a\rangle + \langle\zeta,x^*_s\rangle-\langle\Phi(u),z^*\rangle = x^*_a(u) + \langle\zeta,x^*_s\rangle-z^*(u) \nonumber\\
        &= \int\limits_{\Gamma}\overline{x}(t)\frac{f(t)}{1-z^*(t)}dt + \langle\zeta,x^*_s\rangle - \int\limits_{\Gamma}z^*(t)\frac{f(t)}{1-z^*(t)}dt, \label{lhsevall}
    \end{align}
    where the last equality follows from \eqref{eq01}.
    From the convexity of $w\mapsto-\operatorname{ ln }w,$ one obtains 
    \begin{equation}\label{ineqln}
            \forall w_1,w_2>0, \quad\frac{w_2-w_1}{w_2}\leq \operatorname{ ln }w_2-\operatorname{ ln }w_1.
    \end{equation}
    Because the convex set $C$ given by \eqref{setC} is a cone, Corollary \ref{cor1rockafellar} together with the condition $x^*\in\operatorname{dom }H_f^*$ shows that $-\int\limits_{\Gamma}f(t)\operatorname{ln }(1-\overline{x}(t))dt<+\infty$, hence $1-\overline{x}(t)>0$ almost everywhere.
    With this, \eqref{ineqln} reads as
    \begin{equation*}
        \text{for almost any }t\in\Gamma,\quad \frac{(1-z^*(t))-(1-\overline{x}(t))}{1-z^*(t)} \leq \operatorname{ln }(1-z^*(t))-\operatorname{ln }(1-\overline{x}(t)),
    \end{equation*}
    which together with assumption \ref{P3} leads to
    \begin{equation*}
        \text{for almost any }t\in\Gamma, \quad f(t)\operatorname{ln }(1-\overline{x}(t))\leq  -(\overline{x}(t)-z^*(t))\frac{f(t)}{1-z^*(t)} + f(t)\operatorname{ln }(1-z^*(t)),
    \end{equation*}
    and
    \begin{equation}\label{intineq}
        \int\limits_{\Gamma}(\overline{x}(t)-z^*(t))\frac{f(t)}{1-z^*(t)}dt\leq \int\limits_{\Gamma}f(t)\operatorname{ln }(1-z^*(t)) - \int\limits_{\Gamma}f(t)\operatorname{ln }(1-\overline{x}(t)).
    \end{equation}
    On the other hand, $H_f\left(\frac{f}{1-z^*}\right)<+\infty$ leads to $u=\frac{f}{1-z^*}\in C$, hence $\langle\zeta,x^*_s\rangle=\langle\Phi(u),x^*_s\rangle= x^*_s(u) \leq \delta_C^*(x^*_s)$, which along with \eqref{lhsevall} and \eqref{intineq} demonstrates
    \begin{equation*}
        \langle\zeta,x^*-z^*\rangle\leq - \int\limits_{\Gamma}f(t)\operatorname{ln }(1-\overline{x}(t)) + \delta^*_C(x^*_s) - \left(-\int\limits_{\Gamma}f(t)\operatorname{ln }(1-z^*(t))\right),
    \end{equation*}
    that is precisely \eqref{ineqsubdiff}.
    \end{proof} 

    The previous results will be employed for deriving error estimates with respect to the Bregman distance. For this, we additionally make use of a strong convexity  property for an appropriate function.
    \begin{theo}\label{mainth}
        Let source condition \eqref{SC3} hold. Assume that $Au_{\alpha}\in E$, for any $\alpha>0$, and there exists a constant $m>0$ such that  $m\leq\inf\{\operatorname{essinf }Au_{\alpha} : \alpha>0, \alpha\text{ sufficiently small}\}$. Then the following inequality is true,
        \begin{equation}\label{estimmainth}
            \forall\alpha>0 \text{ sufficiently small}, \quad D_R(u_\alpha,u\de)\leq D_{R}(u\de-\alpha v\de, u\de)+O(\alpha^3).
        \end{equation}
    \end{theo}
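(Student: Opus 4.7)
The strategy is to verify hypothesis \eqref{egalit} of Theorem \ref{bregmanestim} with $v=v\de$, $s(\alpha)=\alpha$, and $r=3$, which immediately yields \eqref{estimmainth}. Since $p\de\in L^\infty(\Gamma)$, for $\alpha$ small enough one has $1+\alpha p\de\in E$, so Proposition \ref{prop2} allows the choice $\zeta^{\dagger}_{\alpha}=\Phi\!\left(\tfrac{f}{1+\alpha p\de}\right)\in\partial H_f^*(-\alpha p\de)$. Moreover, the assumption $Au_\alpha\in E$ together with Theorem \ref{lemabenninggen} applied to the dual optimality \eqref{OC2} and Proposition \ref{propsubdiffconj} yields $-\alpha p_\alpha=1-f/Au_\alpha$, so $p_\alpha\in L^\infty(\Gamma)$ (no singular component).

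A pointwise Taylor expansion gives $\tfrac{1}{1+\alpha p\de}=1-\alpha p\de+\alpha^{2}(p\de)^{2}+O(\alpha^{3})$ in $L^\infty(\Gamma)$; multiplying by $f$ and substituting the equality $fp\de=Av\de$ from \eqref{SC3},
$$\zeta^{\dagger}_{\alpha}=f-\alpha Av\de+\alpha^{2} f(p\de)^{2}+\rho_\alpha,\qquad \|\rho_\alpha\|_{L^\infty}=O(\alpha^{3}).$$
Pairing with $p_\alpha-p\de$ isolates the two leading terms that appear on the right-hand side of \eqref{egalit}, so that verifying \eqref{egalit} reduces to absorbing the surplus
$$\alpha^{2}\langle f(p\de)^{2},\,p_\alpha-p\de\rangle+\langle \rho_\alpha,\,p_\alpha-p\de\rangle$$
into the Bregman term $\tfrac{1}{\alpha}D^{s}_{H_f^*}(-\alpha p_\alpha,-\alpha p\de)$, modulo an $O(\alpha^{3})$ remainder.

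For this absorption, I would derive a quantitative coercivity bound for the Bregman term. Since both subgradients are now in $L^\infty$,
$$Au_\alpha-\zeta^{\dagger}_{\alpha}=\frac{f}{1+\alpha p_\alpha}-\frac{f}{1+\alpha p\de}=\frac{\alpha f(p\de-p_\alpha)}{(1+\alpha p_\alpha)(1+\alpha p\de)},$$
and because $1+\alpha p_\alpha=f/Au_\alpha$ stays in a compact subinterval of $(0,\infty)$ thanks to assumption \ref{P3} and the uniform lower bound $\operatorname{essinf} Au_\alpha\geq m$, one obtains
$$D^{s}_{H_f^*}(-\alpha p_\alpha,-\alpha p\de)=\alpha\langle Au_\alpha-\zeta^{\dagger}_{\alpha},\,p\de-p_\alpha\rangle=\alpha^{2}\!\int_\Gamma\!\frac{f(p_\alpha-p\de)^{2}}{(1+\alpha p_\alpha)(1+\alpha p\de)}\,dt\geq c\alpha^{2}\|p_\alpha-p\de\|_{L^{2}}^{2}$$
for some constant $c>0$ uniform in small $\alpha$. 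Applying Cauchy--Schwarz to the surplus gives an upper bound of the form $C\alpha^{2}\|p_\alpha-p\de\|_{L^2}$, which is then absorbed by Young's inequality with parameter $\varepsilon=c\alpha$ into $c\alpha\|p_\alpha-p\de\|_{L^{2}}^{2}$ plus a term of order $\alpha^{3}$; this closes the verification of \eqref{egalit} and Theorem \ref{bregmanestim} concludes.

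The principal obstacle is the quantitative strong convexity argument: it is the step that breaks the analysis out of the additive-noise framework of \cite{Grasmair13, Hohage19}. The non-additivity of the Poisson fidelity forces the $O(\alpha)$ main term to arise from the Taylor expansion of $\zeta^{\dagger}_{\alpha}$ around $f$ rather than from a linear term in the dual functional, so the natural remainder sits at order $\alpha^{2}\|p_\alpha-p\de\|$; matching it against the Bregman coercivity requires exactly the uniform essential bound $\operatorname{essinf} Au_\alpha\geq m>0$, without which the denominator $(1+\alpha p_\alpha)(1+\alpha p\de)$ could degenerate and the constant $c$ would blow up.
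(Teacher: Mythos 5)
Your proposal is correct and follows essentially the same route as the paper's proof: the same choice of subgradients via Propositions \ref{propsubdiffconj} and \ref{prop2}, the Taylor expansion of $\frac{1}{1+\alpha p\de}$ combined with the source condition $fp\de=Av\de$ to produce the linear term, the identity $1+\alpha p_\alpha=f/Au_\alpha$ together with $\operatorname{essinf}Au_\alpha\geq m$ to get the quantitative lower bound $D^s_{H_f^*}(-\alpha p_\alpha,-\alpha p\de)\geq c\alpha^2\Vert p_\alpha-p\de\Vert_2^2$, and Cauchy--Schwarz plus Young to absorb the second-order surplus. The only cosmetic difference is that you expand to third order with an $L^\infty$ remainder $\rho_\alpha$ while the paper stops at second order with a Lagrange remainder $\tfrac{\alpha^2(p\de)^2}{(1+\theta)^3}$; both are handled the same way.
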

    \begin{proof}
        To get \eqref{estimmainth}, we will show that \eqref{egalit} holds for $s(\alpha)=\alpha$ and $r=3$. More specifically, we focus on estimating the term $\langle\zeta_{\alpha}^{\dagger},p_{\alpha}-p\de\rangle$ of \eqref{egalitateBregman}. From that, one gets the conclusion by employing Theorem \ref{bregmanestim}.
        We start by pointing out appropriate subgradients from $\partial H_f^* (-\alpha p\de)$ and $\partial H_f^* (-\alpha p_{\alpha})$, which will be needed in the analysis. 

        According to \eqref{SC3}, $p\de\in L^{\infty}(\Gamma)$, therefore $\operatorname{esssup} \vert p\de\vert<+\infty$ and $\vert p\de(t)\vert \leq\operatorname{esssup} \vert p\de\vert$ almost everywhere on $\Gamma$. For $\alpha$ small enough, one gets $\alpha\vert p\de(t)\vert\leq\frac{1}{2}$ a.e. and 
        \begin{equation}\label{q1}
            0<\frac{1}{2}\leq1+\alpha p\de(t)\leq\frac{3}{2}\;\; \text{ a.e.} 
        \end{equation}
        It follows that $1+\alpha p\de\in E$, and thus \eqref{subdiffconj} yields
        \begin{equation*}
            \forall \alpha>0 \text{ small enough, } \quad\zeta_{\alpha}^{\dagger}=\Phi\left(\frac{f}{1+\alpha p\de}\right)\in\partial H_f^* (-\alpha p\de).
        \end{equation*}
        
        We would like to prove now that $1+\alpha p_{\alpha}\in E$. The function $H_f$ is proper, convex and continuous (and, in particular, lower semicontinuous). The Fenchel conjugate of a proper function is always convex and lower semicontinuous (see e.g. Part I, Chapter I.4 from \cite{Ekeland99}), hence $H_f^*$ inherits these properties. Moreover, it is easy to see that $H_f^*$ is proper as well. Theorem \ref{lemabenninggen} implies the following equivalent formulation for the optimality condition \eqref{OC2}, 
        \begin{equation*}
            \forall\alpha>0,\quad Au_\alpha\in\partial H_f^*(-\alpha p_{\alpha}) \Leftrightarrow-\alpha p_{\alpha}\in \partial H_f(Au_{\alpha}).
        \end{equation*}
        Since $Au_{\alpha}\in E$, due to the hypothesis, Proposition \ref{propsubdiffconj} yields
        \begin{equation}\label{eqz}
            \forall\alpha>0,\quad 1+\alpha p_{\alpha}=\frac{f}{Au_{\alpha}},
        \end{equation}
        which is  in $E$ (see also assumption \ref{P3}).
        Therefore we can employ again \eqref{subdiffconj} to find
         \begin{equation*}
            \forall \alpha>0, \quad\zeta_{\alpha}=\Phi\left(\frac{f}{1+\alpha p_{\alpha}}\right)\in \partial H_f^* (-\alpha p_{\alpha}).
        \end{equation*}
        
        The next step is to evaluate the following term from \eqref{egalit}, $$\langle\zeta_{\alpha}^{\dagger}, p_{\alpha}-p\de\rangle=\left\langle\Phi\left(\frac{f}{1+\alpha p\de}\right), p_{\alpha} -p\de\right\rangle=\int\limits_{\Gamma}{\frac{f(t)}{1+\alpha p\de(t)}(p_{\alpha}(t) -p\de(t))dt},$$ where the last equality holds because $p_{\alpha}, p\de$ are in $L^{\infty}(\Gamma)$.
        Using the Taylor expansion of $s\mapsto\frac{1}{1+s}$ around $0$, we have that for $s>0$, there exists $\theta\in[0,s]$ depending on $s$, such that
        \begin{equation*}
            \frac{1}{1+s}=1-s+\frac{1}{(1+\theta)^3}s^2,
        \end{equation*}
        which for $s=\alpha p\de(t)$ (with $t\in\Gamma$ arbitrary, but fixed) gives a $\theta(t)\in[0,\alpha p\de(t)]$ satisfying
        \begin{gather*}
            \frac{f(t)}{1+\alpha p\de(t)}=f(t)-\alpha f(t)p\de(t)+\alpha^2\frac{ f(t)(p\de(t))^2}{(1+\theta(t))^3}.
        \end{gather*}
        Using source condition \eqref{SC3}, one gets
        \begin{align}
            \int\limits_{\Gamma}{\frac{f(t)}{1+\alpha p\de(t)}(p_{\alpha}(t) -p\de(t))dt} &= \int\limits_{\Gamma}{f(t)(p_{\alpha}(t) -p\de(t))dt}-\alpha \int\limits_{\Gamma}{f(t)p\de(t)(p_{\alpha}(t) -p\de(t))dt} \nonumber \\
            &+\alpha^2 \int\limits_{\Gamma}{\frac{f(t)(p\de(t))^2}{(1+\theta(t))^3}(p_{\alpha}(t) -p\de(t))dt} \label{taylordezv}\\
           &= \int\limits_{\Gamma}{f(t)(p_{\alpha}(t) -p\de(t))dt}-\alpha \int\limits_{\Gamma}{Av\de(t)(p_{\alpha}(t) -p\de(t))dt}\nonumber\\
           &+\alpha^2 \int\limits_{\Gamma}{\frac{f(t)(p\de(t))^2}{(1+\theta(t))^3}(p_{\alpha}(t) -p\de(t))dt}. \nonumber
        \end{align}
        Next, we will prove that
        \begin{equation}\label{rate}
            \alpha^2 \int\limits_{\Gamma}{\frac{f(t)(p\de(t))^2}{(1+\theta(t))^3}(p_{\alpha}(t) -p\de(t))dt} - \frac{1}{\alpha} D_{H^*_{f}}^s(-\alpha p_{\alpha},-\alpha p\de)=O(\alpha^3).
        \end{equation}
        Firstly, we use the definition of the symmetric Bregman distance, assumption \ref{P3}, equality \eqref{eqz} and similar reasoning as in Proposition \ref{prop2} to get that for any $\alpha>0$ sufficiently small, 
            \begin{align}
                \frac{1}{\alpha} D_{H^*_{f}}^s(-\alpha p_{\alpha},-\alpha p\de)&=\frac{1}{\alpha} \int\limits_{\Gamma}{\frac{f(t)}{(1+\alpha p_{\alpha}(t))(1+\alpha p\de(t))}(\alpha p_{\alpha}(t)-\alpha p\de(t))^2 dt} \nonumber \\
                 &= \frac{1}{\alpha} \int\limits_{\Gamma}{\frac{Au_{\alpha}(t)}{1+\alpha p\de(t)}(\alpha p_{\alpha}(t)-\alpha p\de(t))^2 dt} \nonumber \\ 
                 &\geq \frac{1}{\alpha}\cdot\frac{2m}{3}\Vert\alpha p_{\alpha}-\alpha p\de\Vert_2^2=\frac{\gamma}{\alpha}\Vert\alpha p_{\alpha}-\alpha p\de\Vert_2^2, \label{inegsym}
            \end{align}
         with  $\gamma:= \frac{2m}{3}$, where the last inequality follows from \eqref{q1} and the hypothesis. 
        
         From the Cauchy-Schwarz and Young inequalities, it follows
        \begin{align*}
              \int\limits_{\Gamma}{\frac{f(t)(p\de(t))^2}{(1+\theta(t))^3}(p_{\alpha}(t) -p\de(t))dt} &\leq \left\Vert\frac{f(p\de)^2}{(1+\theta)^3}\right\Vert_2\cdot\Vert p_{\alpha}-p\de\Vert_2\leq \left\Vert f(p\de)^2\right\Vert_2\cdot\Vert p_{\alpha}-p\de\Vert_2 \\
              &\leq \alpha\frac{\Vert f(p\de)^2\Vert^2_2}{4\gamma} + \frac{\gamma\Vert p_{\alpha}-p\de\Vert^2_2}{\alpha},
        \end{align*} 
        which, in conjunction with \eqref{inegsym}, leads to
        \begin{align*}
            \alpha^2 \int\limits_{\Gamma}{\frac{f(t)(p\de(t))^2}{(1+\theta(t))^3}(p_{\alpha}(t) -p\de(t))dt} - \frac{1}{\alpha} D_{H^*_{f}}^s(-\alpha p_{\alpha},-\alpha p\de)= \alpha^3\frac{\Vert f(p\de)^2\Vert^2_2}{4\gamma} =O(\alpha^3).
        \end{align*}
        In conclusion, \eqref{rate} holds, which proves \eqref{egalit}.
    \end{proof}

\begin{remark}
The assumption in Theorem \ref{mainth}  on $Au_\alpha $ being away from zero is  essential in dealing with Kullback-Leibler functionals. This is satisfied  in regularization with various penalties, either naturally or enforced by additional constraints. See, e.g., Section \ref{KLfidelityjoint} for the case of joint Kullback-Leibler regularization, then the total variation minimization in \cite{bardsley2009total}, as well as the sparsity setting in \cite{figueiredo2010restoration}.
\end{remark}

\subsection{Kullback-Leibler fidelity - noisy  data} \label{KLfidelitynoisy}
    Assume now that \ref{P1} holds and that the exact data $f$ are not known, but we only have access to an approximation $f^{\delta}\in L^{\infty}_+(\Gamma)$ satisfying
    \begin{equation*}
        \Vert f-f^{\delta}\Vert_{\infty}\leq\delta,
    \end{equation*}
    and being bounded away from $0$ almost everywhere on $\Gamma$, i.e.,
    \begin{enumerate}[start=3,label={(P\arabic*)}]
        \item $\exists\; C_1,C_2>0 \text{ such that } C_1<f^{\delta}(t)<C_2 \text{ a.e. on } \Gamma$, uniformly with respect to $\delta>0$.\label{Pn}
    \end{enumerate}
    
    Let $H_{f^{\delta}}:L^{\infty}(\Gamma)\rightarrow[0,+\infty]$, 
        \begin{gather*}
            H_{f^{\delta}}(z)=
            \begin{cases}
                \int\limits_{\Gamma}\varphi_{f^{\delta}}(t,z(t)) dt, & \text{ if } z\in \operatorname{dom }g,\\
                +\infty, & \text{ otherwise},
            \end{cases} 
        \end{gather*}
        with $\varphi_{f^{\delta}}:\Gamma\times\mathbb{R}\rightarrow(-\infty,+\infty],$
         \begin{equation*}
            \varphi_{f^{\delta}}(t,\tau)= \begin{cases}
                        f^{\delta}(t)\text{ ln }\frac{f^{\delta}(t)}{\tau}-f^{\delta}(t)+\tau, & \text{ if }\tau\in(0,+\infty),\\
                        +\infty, & \text{ otherwise.}
                        \end{cases}
        \end{equation*}
    Let $u_{\alpha}^{\delta}$ and $p_{\alpha}^{\delta}$ be solutions for the primal and the dual problem, respectively. They must satisfy the conditions of the Fenchel-Rockafellar Theorem,
    \begin{gather}
        \xi_{\alpha}^{\delta}:=A^*p_{\alpha}^{\delta}\in\partial R(u_{\alpha}^{\delta}) \label{OC4},\\
        \zeta_{\alpha}^{\delta}:=Au_\alpha^{\delta}\in\partial H_{f^{\delta}}^*(-\alpha p_{\alpha}^{\delta}). \nonumber
    \end{gather}
     Assume again that $\partial H_{f^{\delta}}^*(-\alpha p\de)\neq\emptyset$ and  consider
    \begin{equation}\label{zeta}
        \zeta_{\alpha}^{\dagger}\in\partial H_{f^{\delta}}^*(-\alpha p\de).
    \end{equation}

    Auxiliary results similar to the ones in the exact data case hold in this setting, as well.
    \begin{lemma}\label{noisylemma}
    If \eqref{SC1} holds, then the following is true,
    \begin{equation}\label{noisylemmaeq}
        D_R^s(u_\alpha^{\delta},u\de)+\frac{1}{\alpha} D_{H^*_{f^{\delta}}}^s(-\alpha p_\alpha^{\delta},-\alpha p\de) = \langle f,p\de-p_{\alpha}^{\delta}\rangle + \langle \zeta_{\alpha}^{\dagger},p_{\alpha}^{\delta}-p\de\rangle,
    \end{equation}
    where $D_{H^*_{f^{\delta}}}^s(-\alpha p_\alpha^{\delta},-\alpha p\de)$ denotes the symmetric Bregman distance between $-\alpha p_\alpha^{\delta},-\alpha p\de$ with respect to the subgradients $\zeta_{\alpha}^{\delta}, \zeta^{\dagger}_{\alpha}$, respectively.
    \end{lemma}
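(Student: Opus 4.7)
My plan is to mimic the proof of the exact-data analog \eqref{egalitateBregman}, replacing $u_\alpha, p_\alpha, \zeta_\alpha$ with their noisy counterparts $u_\alpha^\delta, p_\alpha^\delta, \zeta_\alpha^\delta$ and using the noisy optimality conditions \eqref{OC4} and $\zeta_\alpha^\delta = Au_\alpha^\delta \in \partial H_{f^\delta}^*(-\alpha p_\alpha^\delta)$ together with the unchanged source condition \eqref{SC1}. The point to watch is that the equation $Au^\dagger = f$ involves the \emph{exact} data $f$, so even though all the primal and dual variables carry the $\delta$ superscript, the first inner product on the right-hand side of \eqref{noisylemmaeq} contains $f$ rather than $f^\delta$.

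First I would write out the symmetric Bregman distance
\begin{equation*}
D_R^s(u_\alpha^\delta, u\de) = D_R^{\xi\de}(u_\alpha^\delta, u\de) + D_R^{\xi_\alpha^\delta}(u\de, u_\alpha^\delta),
\end{equation*}
expand both summands via the definition of the Bregman distance, and cancel the $R$-terms to obtain $\langle u\de - u_\alpha^\delta, A^*p\de - A^*p_\alpha^\delta\rangle$, where I used $\xi\de = A^*p\de$ from \eqref{SC1} and $\xi_\alpha^\delta = A^*p_\alpha^\delta$ from \eqref{OC4}. Moving $A^*$ across gives $\langle Au\de - Au_\alpha^\delta, p\de - p_\alpha^\delta\rangle$, and since $u\de$ solves \eqref{IP} for the exact data, this becomes
\begin{equation*}
D_R^s(u_\alpha^\delta, u\de) = \langle f, p\de - p_\alpha^\delta\rangle + \langle Au_\alpha^\delta, p_\alpha^\delta - p\de\rangle = \langle f, p\de - p_\alpha^\delta\rangle + \langle \zeta_\alpha^\delta, p_\alpha^\delta - p\de\rangle,
\end{equation*}
where the second equality uses $\zeta_\alpha^\delta = Au_\alpha^\delta$.

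Next, I would add and subtract $\zeta_\alpha^\dagger$ inside the last inner product. Since $\zeta_\alpha^\delta \in \partial H_{f^\delta}^*(-\alpha p_\alpha^\delta)$ and $\zeta_\alpha^\dagger \in \partial H_{f^\delta}^*(-\alpha p\de)$ by \eqref{zeta}, the definition of the symmetric Bregman distance yields
\begin{equation*}
\langle \zeta_\alpha^\delta - \zeta_\alpha^\dagger,\, -\alpha p_\alpha^\delta + \alpha p\de\rangle = D_{H^*_{f^\delta}}^s(-\alpha p_\alpha^\delta, -\alpha p\de),
\end{equation*}
so that $\langle \zeta_\alpha^\delta - \zeta_\alpha^\dagger, p_\alpha^\delta - p\de\rangle = -\frac{1}{\alpha} D_{H^*_{f^\delta}}^s(-\alpha p_\alpha^\delta, -\alpha p\de)$. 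Substituting this into the previous display and rearranging produces \eqref{noisylemmaeq}.

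I do not foresee any genuine obstacle: the argument is a verbatim adaptation of the exact-data lemma, whose validity rests only on the interpretation of the subgradients and on the algebraic identity for the symmetric Bregman distance. The only subtlety worth flagging explicitly in the write-up is that $f$ (and not $f^\delta$) appears on the right-hand side, which is forced by the use of $Au\de = f$; the noisy data enter the identity only implicitly, through the definition of $H_{f^\delta}^*$ and hence of the Bregman distance on the dual side.
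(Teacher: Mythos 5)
Your proof is correct and follows exactly the route the paper intends: the paper omits the proof of this noisy-data lemma, stating only that it is analogous to the exact-data identity \eqref{egalitateBregman}, and your argument is precisely that adaptation (same expansion of $D_R^s$, same use of \eqref{SC1}, \eqref{OC4} and $Au\de=f$, same add-and-subtract of $\zeta_\alpha^\dagger$ to produce the dual symmetric Bregman distance). The remark that $f$ rather than $f^\delta$ must appear on the right-hand side is exactly the one subtlety worth noting, and you handle it correctly.
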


    \begin{lemma}
        The Fenchel conjugate of $H_{f^{\delta}}$ is given by 
        \begin{align*}
            H_{f^{\delta}}^*:(L^{\infty}(\Gamma))^*\rightarrow\mathbb{R}\cup\{+\infty\},  H_{f^{\delta}}^*(z^*)= \int\limits_{\Gamma}{\varphi_{f^{\delta}}^*(t,\overline{z}(t))} +\delta^*_{C}(z^*_s)= -\int\limits_{\Gamma}{f^{\delta}(t)\text{ ln}\left(1-\overline{z}(t)\right)dt}+\delta^*_{C}(z^*_s),
        \end{align*}
        where $\overline{z}\in L^1(\Gamma)$ corresponds to the "absolutely continuous" component of $z^*$, $z^*_s$ is the "singular" component of $z^*$ and $$C=\{z\in L^{\infty}(\Gamma)\;\vert \;H_{f^{\delta}}(z)<+\infty\}.$$
    \end{lemma}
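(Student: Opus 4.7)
The plan is to mimic the proof of Lemma \ref{conjlemma} with $f$ replaced by $f^\delta$. Since assumption \ref{Pn} gives $f^\delta$ exactly the same structural properties (positivity away from zero and boundedness in $L^\infty$) that \ref{P3} gave $f$, the earlier argument will carry over essentially verbatim, so the strategy is again to apply Theorem \ref{theorockafellar} to the integrand $\varphi_{f^\delta}$.

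First I would verify that $\varphi_{f^\delta}$ is a normal convex integrand in the sense required by Theorem \ref{theorockafellar}. For each fixed $t \in \Gamma$ the map $\tau \mapsto \varphi_{f^\delta}(t,\tau)$ is proper, convex and lower semicontinuous on $\mathbb{R}$ with effective domain $(0,+\infty)$, hence with nonempty interior; joint measurability follows from the measurability of $f^\delta$ (guaranteed by \ref{Pn}) together with the continuity of $(s,\tau) \mapsto s\ln(s/\tau) - s + \tau$ on $(0,\infty)^2$. Choosing $z = f^\delta \in L^\infty(\Gamma)$ yields $\varphi_{f^\delta}(t, f^\delta(t)) = 0$, which is trivially majorized by a summable function, giving the first majorization hypothesis of the theorem.

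Next I would compute the pointwise Fenchel conjugate by solving
\[
\varphi_{f^\delta}^*(t,\tau^*) = \sup_{\tau > 0}\left\{\tau^*\tau - f^\delta(t)\ln\tfrac{f^\delta(t)}{\tau} + f^\delta(t) - \tau\right\}.
\]
The analysis is identical to Lemma \ref{conjlemma}: for $\tau^* < 1$ the supremum is attained at $\tau = f^\delta(t)/(1-\tau^*) > 0$ (which is positive thanks to the lower bound in \ref{Pn}), yielding $\varphi_{f^\delta}^*(t,\tau^*) = -f^\delta(t)\ln(1-\tau^*)$; for $\tau^* \geq 1$ the map $\tau \mapsto \tau^*\tau + f^\delta(t)\ln\tau - \tau$ is strictly increasing to $+\infty$. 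Taking $z_0 \equiv 0 \in L^1(\Gamma)$ one has $\varphi_{f^\delta}^*(t,0) = 0$, which is summable, so the second majorization hypothesis of Theorem \ref{theorockafellar} also holds.

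Applying Theorem \ref{theorockafellar} then immediately delivers the claimed formula, with the absolutely continuous part contributing $-\int_\Gamma f^\delta(t) \ln(1-\overline{z}(t))\,dt$ and the singular part contributing $\delta_C^*(z_s^*)$. I do not foresee any real obstacle, since \ref{Pn} mirrors \ref{P3} exactly and all the delicate points (positivity of the maximizer, integrability of $f^\delta \ln f^\delta$, existence of a test point in the domain) were already handled in Lemma \ref{conjlemma}; in fact, the statement could almost be recorded as a corollary obtained by replacing $f$ with $f^\delta$ throughout that earlier proof.
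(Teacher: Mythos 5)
Your proposal is correct and follows exactly the route the paper intends: the paper states this lemma without proof precisely because it is the verbatim analogue of Lemma \ref{conjlemma} with $f$ replaced by $f^{\delta}$, relying on Theorem \ref{theorockafellar} and on assumption \ref{Pn} playing the role of \ref{P3}. Your verification of the normal-integrand hypotheses, the pointwise conjugate computation, and the two majorization conditions matches the earlier argument, so nothing further is needed.
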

    \begin{prop}
        Let $z\in E$, where $E$ is defined by \eqref{defE1}. Then the subdifferential of $H_{f^{\delta}}$ at $z$ is
        \begin{equation}\label{subdifffcd}
            \partial H_{f^{\delta}} (z)=\left\{1-\frac{f^{\delta}}{z}\right\}.
        \end{equation}
    \end{prop}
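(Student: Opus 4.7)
The plan is to mirror the proof of Proposition \ref{propsubdiffconj} verbatim, with $f^{\delta}$ (and the associated integrand $\varphi_{f^{\delta}}$) replacing $f$ and $\varphi_{f}$ throughout. Concretely, I would apply Corollary \ref{cor2rockafellar} to $H_{f^{\delta}}$: once its hypothesis is verified, a functional $v\in (L^{\infty}(\Gamma))^{*}$ belongs to $\partial H_{f^{\delta}}(z)$ if and only if its singular component vanishes and its absolutely continuous component is represented by $t\mapsto (\varphi_{f^{\delta}}(t,\cdot))'(z(t))$. A direct calculation from the definition of $\varphi_{f^{\delta}}$ gives $(\varphi_{f^{\delta}}(t,\cdot))'(\tau)=1-\tfrac{f^{\delta}(t)}{\tau}$ for $\tau>0$, which yields the claimed singleton formula \eqref{subdifffcd}.

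The first step is to check the integrability hypothesis of Corollary \ref{cor2rockafellar}. I would take $\overline{u}=z$ and $r=\tfrac{1}{2}\operatorname{essinf}(z)$, which is strictly positive since $z\in E$. For any $x\in\mathbb{R}$ with $|x|<r$ one has $z(t)+x\geq \tfrac{1}{2}\operatorname{essinf}(z)>0$ a.e., together with $z(\cdot)+x\in L^{\infty}(\Gamma)$; hence $z(\cdot)+x\in E$. Combined with assumption \ref{Pn}, which gives $C_{1}<f^{\delta}(t)<C_{2}$ a.e., the function
\begin{equation*}
\varphi_{f^{\delta}}(t,z(t)+x)=f^{\delta}(t)\ln\frac{f^{\delta}(t)}{z(t)+x}-f^{\delta}(t)+z(t)+x
\end{equation*}
is bounded by an $L^{1}(\Gamma)$ function, so it lies in $L^{1}(\Gamma)$ uniformly for such $x$.

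The second step is simply to invoke Corollary \ref{cor2rockafellar}: any $v\in\partial H_{f^{\delta}}(z)$ has $v_{s}=0$ and its absolutely continuous part $\overline{v}\in L^{1}(\Gamma)$ satisfies $\overline{v}(t)\in\partial \varphi_{f^{\delta}}(t,\cdot)(z(t))$ a.e. Because $\tau\mapsto \varphi_{f^{\delta}}(t,\tau)$ is differentiable on $(0,+\infty)$ with derivative $1-f^{\delta}(t)/\tau$ and $z(t)>0$ a.e., the subdifferential at $z(t)$ is the singleton $\{1-f^{\delta}(t)/z(t)\}$. Identifying $\overline{v}$ as an element of $L^{\infty}(\Gamma)\subset(L^{\infty}(\Gamma))^{*}$ through the canonical pairing gives $\partial H_{f^{\delta}}(z)=\{1-f^{\delta}/z\}$, as required.

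I do not anticipate a genuine obstacle: the proof is a routine transcription of Proposition \ref{propsubdiffconj}, the only substantive difference being that the lower and upper bounds on $f^{\delta}$ are now supplied by \ref{Pn} instead of \ref{P3}, so that all integrability estimates go through uniformly in $\delta$. The fact that $z\in E$ (and not merely $z\in L^{\infty}_{+}(\Gamma)$) is essential, since it supplies the strictly positive lower bound needed to keep $z+x$ away from the boundary singularity of $\varphi_{f^{\delta}}$.
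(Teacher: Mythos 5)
Your proposal is correct and matches the paper's (implicit) argument: the paper states this proposition without proof, noting only that the auxiliary results carry over from the exact-data case, and your transcription of the proof of Proposition \ref{propsubdiffconj} — choosing $\overline{u}=z$, $r=\tfrac{1}{2}\operatorname{essinf}(z)$, verifying the integrability hypothesis of Corollary \ref{cor2rockafellar} via $z+x\in E$ together with the bounds on $f^{\delta}$ from \ref{Pn}, and then reading off the pointwise derivative $1-f^{\delta}/z$ — is exactly the intended argument.
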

    \begin{prop}
        Let $z^*\in L^{\infty}(\Gamma)$ such that $1-z^*\in E$. Then one has
        \begin{equation}\label{subdiffconjd}
            \Phi\left(\frac{f^{\delta}}{1-z^*}\right)\in\partial H_{f^{\delta}}^*(z^*),
        \end{equation}
        where $\Phi:L^{\infty}(\Gamma)\rightarrow(L^{\infty}(\Gamma))^{**}$ denotes the canonical embedding of $L^{\infty}(\Gamma)$ in its bidual.
    \end{prop}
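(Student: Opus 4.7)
The plan is to transcribe the proof of Proposition \ref{prop2} with $f$ replaced by $f^{\delta}$ throughout, using assumption \ref{Pn} in the role of \ref{P3}. Because $1-z^{*}\in E$ and $f^{\delta}\in L^{\infty}_{+}(\Gamma)$ is bounded above and below by \ref{Pn}, the function $u:=f^{\delta}/(1-z^{*})$ lies in $L^{\infty}(\Gamma)$, so $\zeta:=\Phi(u)\in (L^{\infty}(\Gamma))^{**}$ is well defined. To establish \eqref{subdiffconjd}, I would prove the subgradient inequality
\begin{equation*}
\forall\,x^{*}\in\operatorname{dom}H^{*}_{f^{\delta}},\qquad \langle\zeta,x^{*}-z^{*}\rangle\leq H^{*}_{f^{\delta}}(x^{*})-H^{*}_{f^{\delta}}(z^{*}).
\end{equation*}

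Fix such an $x^{*}$ and use the Rockafellar decomposition $x^{*}=x^{*}_{a}+x^{*}_{s}$, writing $x^{*}_{a}$ via its representative $\overline{x}\in L^{1}(\Gamma)$. Since $z^{*}\in L^{\infty}(\Gamma)\subset L^{1}(\Gamma)$, its singular component vanishes, so
\begin{equation*}
\langle\zeta,x^{*}-z^{*}\rangle=\int_{\Gamma}\overline{x}(t)\frac{f^{\delta}(t)}{1-z^{*}(t)}\,dt+\langle\zeta,x^{*}_{s}\rangle-\int_{\Gamma}z^{*}(t)\frac{f^{\delta}(t)}{1-z^{*}(t)}\,dt.
\end{equation*}
Because the set $C=\{z\in L^{\infty}(\Gamma)\mid H_{f^{\delta}}(z)<+\infty\}$ is a cone, Corollary \ref{cor1rockafellar} forces $x^{*}\in\operatorname{dom}H^{*}_{f^{\delta}}$ to satisfy $1-\overline{x}(t)>0$ almost everywhere; otherwise $H^{*}_{f^{\delta}}(x^{*})=+\infty$. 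Then, exactly as in \eqref{ineqln}--\eqref{intineq}, the convexity of $-\ln$ applied pointwise to $w_{1}=1-\overline{x}(t)$ and $w_{2}=1-z^{*}(t)$, multiplied by $f^{\delta}(t)>0$ (using \ref{Pn}) and integrated, yields
\begin{equation*}
\int_{\Gamma}(\overline{x}(t)-z^{*}(t))\frac{f^{\delta}(t)}{1-z^{*}(t)}\,dt\leq\int_{\Gamma}f^{\delta}(t)\ln(1-z^{*}(t))\,dt-\int_{\Gamma}f^{\delta}(t)\ln(1-\overline{x}(t))\,dt.
\end{equation*}

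For the singular piece, \ref{Pn} ensures $H_{f^{\delta}}(u)<+\infty$, hence $u\in C$, and therefore $\langle\zeta,x^{*}_{s}\rangle=x^{*}_{s}(u)\leq\delta^{*}_{C}(x^{*}_{s})$. Assembling this bound with the preceding inequality and recognising the right-hand side as $H^{*}_{f^{\delta}}(x^{*})-H^{*}_{f^{\delta}}(z^{*})$ completes the argument. The only genuinely delicate step, just as in Proposition \ref{prop2}, is to justify that $u$ belongs to the cone $C$ so that the singular component can be absorbed into $\delta^{*}_{C}$; here the uniform lower and upper bounds on $f^{\delta}$ furnished by \ref{Pn} do the job, and the remainder is routine once the Rockafellar decomposition is in place.
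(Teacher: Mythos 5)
Your proposal is correct and coincides with the paper's intended argument: the paper states this proposition without proof, noting only that the auxiliary results from the exact-data case carry over, and the expected justification is precisely the word-for-word transcription of the proof of Proposition \ref{prop2} with $f$ replaced by $f^{\delta}$ and assumption \ref{Pn} playing the role of \ref{P3}. All the key steps you identify (the Rockafellar decomposition, the cone argument giving $1-\overline{x}>0$ a.e., the convexity of $-\ln$, and the membership $u\in C$ needed to absorb the singular component into $\delta^*_C$) transfer verbatim, so nothing further is required.
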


    We state below the error estimates in the case of noisy data.
    \begin{theo}\label{mainthnoise}
        Let source condition \eqref{SC3} hold. Assume that  $Au_{\alpha}^{\delta}\in E$, for any $\alpha, \delta>0$, and there exists $m>0$ with $m\leq\inf\{\operatorname{essinf }Au_{\alpha}^{\delta} : \alpha,\delta>0, \alpha, \delta \text{ sufficiently small}\}$. Then the following inequality holds,
        \begin{equation*}
            D_R(u_\alpha^{\delta},u\de) \leq   D_R(u\de-\alpha v\de, u\de) +\frac{\delta^2}{\alpha}\cdot\frac{\mu(\Gamma)}{\gamma} +\alpha\delta^2\cdot\frac{\Vert p\de\Vert^2_2}{\gamma} + \alpha^3\cdot\frac{\Vert f(p\de)^2\Vert^2_2}{\gamma}+\alpha^3\delta^2\cdot\frac{\Vert (p\de)^2\Vert_2^2}{\gamma},
        \end{equation*}
        where $\mu(\Gamma)$ stands for the Lebesgue measure of $\Gamma$ and $\gamma=\frac{2m}{3}$.
    \end{theo}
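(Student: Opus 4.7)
The plan is to mirror the proof of Theorem \ref{mainth}, systematically tracking the perturbation $f^{\delta}-f$ through each step. The starting point is Lemma \ref{noisylemma}, which provides
\begin{equation*}
D_R^s(u_\alpha^{\delta},u\de)+\frac{1}{\alpha} D_{H^*_{f^{\delta}}}^s(-\alpha p_\alpha^{\delta},-\alpha p\de) = \langle f,p\de-p_{\alpha}^{\delta}\rangle + \langle \zeta_{\alpha}^{\dagger},p_{\alpha}^{\delta}-p\de\rangle.
\end{equation*}
For $\alpha$ small enough, $1+\alpha p\de\in E$, so by \eqref{subdiffconjd} I may take $\zeta_{\alpha}^{\dagger}=\Phi(f^{\delta}/(1+\alpha p\de))$; the optimality condition together with the hypothesis $Au_{\alpha}^{\delta}\in E$ yields $1+\alpha p_{\alpha}^{\delta}=f^{\delta}/Au_{\alpha}^{\delta}\in E$, so the analogous formula defines $\zeta_{\alpha}^{\delta}$.

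I then Taylor-expand $s\mapsto 1/(1+s)$ at zero in $\langle\zeta_{\alpha}^{\dagger},p_{\alpha}^{\delta}-p\de\rangle=\int f^{\delta}(1+\alpha p\de)^{-1}(p_{\alpha}^{\delta}-p\de)\,dt$ exactly as in \eqref{taylordezv}, and write $f^{\delta}=f+(f^{\delta}-f)$ in each resulting term. Invoking \eqref{SC3} to replace $fp\de$ with $Av\de$ in the first-order contribution, this produces the decomposition
\begin{equation*}
\langle\zeta_{\alpha}^{\dagger},p_{\alpha}^{\delta}-p\de\rangle = \langle f,p_{\alpha}^{\delta}-p\de\rangle - \alpha\langle Av\de,p_{\alpha}^{\delta}-p\de\rangle + E_1+E_2+E_3+E_4,
\end{equation*}
where $E_1=\int(f^{\delta}-f)(p_{\alpha}^{\delta}-p\de)$, $E_2=-\alpha\int(f^{\delta}-f)p\de(p_{\alpha}^{\delta}-p\de)$, $E_3=\alpha^2\int f(p\de)^2(1+\theta)^{-3}(p_{\alpha}^{\delta}-p\de)$, and $E_4=\alpha^2\int(f^{\delta}-f)(p\de)^2(1+\theta)^{-3}(p_{\alpha}^{\delta}-p\de)$.

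I would bound each $E_i$ by Cauchy--Schwarz, using $\Vert f^{\delta}-f\Vert_2\leq\delta\sqrt{\mu(\Gamma)}$ for $E_1$ and $\Vert f^{\delta}-f\Vert_{\infty}\leq\delta$ for $E_2$ and $E_4$, followed by Young's inequality with the weight tuned so that each term contributes exactly $\tfrac{\alpha\gamma}{4}\Vert p_{\alpha}^{\delta}-p\de\Vert_2^2$ to its quadratic part. Summing those four contributions gives $\alpha\gamma\Vert p_{\alpha}^{\delta}-p\de\Vert_2^2$, which is exactly absorbed by the strong-convexity lower bound $\frac{1}{\alpha}D_{H^*_{f^{\delta}}}^s(-\alpha p_{\alpha}^{\delta},-\alpha p\de)\geq\gamma\alpha\Vert p_{\alpha}^{\delta}-p\de\Vert_2^2$, derived as in \eqref{inegsym} from $Au_{\alpha}^{\delta}\geq m$ together with $1+\alpha p\de\leq 3/2$. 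The four linear remainders then match the four explicit terms in the statement. Substituting into the identity above and finishing as in the proof of Theorem \ref{bregmanestim} -- i.e.\ invoking the subgradient inequality $\langle A^*p_{\alpha}^{\delta},u\de-\alpha v\de-u_{\alpha}^{\delta}\rangle\leq R(u\de-\alpha v\de)-R(u_{\alpha}^{\delta})$ together with the definition of the Bregman distance via $\xi\de=A^*p\de$ -- produces the claimed inequality.

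The main delicate point is the compatible bookkeeping of the four Young splittings, whose quadratic residuals must sum precisely to the strong-convexity constant $\alpha\gamma$; any looser split would still deliver the correct order of the rates but would either fail to close the argument or spoil the explicit constants in the statement. The factor $(1+\theta)^{-3}$ is uniformly controlled once $\alpha$ is small enough to guarantee $\vert\alpha p\de\vert\leq 1/2$, and the rest of the calculation is a direct transcription of the exact-data argument.
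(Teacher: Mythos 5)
Your proposal is correct and follows essentially the same route as the paper: Lemma \ref{noisylemma}, the same choice of $\zeta_{\alpha}^{\dagger}$, the Taylor expansion of $(1+\alpha p\de)^{-1}$ with the splitting $f^{\delta}=f+(f^{\delta}-f)$, Cauchy--Schwarz and Young tuned so that the quadratic residuals are absorbed by the strong-convexity bound $\frac{1}{\alpha}D_{H^*_{f^{\delta}}}^s\geq\gamma\alpha\Vert p_{\alpha}^{\delta}-p\de\Vert_2^2$, and the final subgradient step from Theorem \ref{bregmanestim}. The bookkeeping of the four Young splittings reproduces exactly the four remainder terms and the constants of the statement.
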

    \begin{proof}
        Following the steps of the proof of Theorem \ref{mainth}, one gets that for any $t\in\Gamma$ fixed, there exists a $\theta(t)\in[0,\alpha p\de(t)]$ such that
        \begin{equation*}
            \frac{f^{\delta}(t)}{1+\alpha p\de(t)}=f^{\delta}(t)- \alpha f^{\delta}(t)p\de(t)+ \alpha^2 \frac{f^{\delta}(t)(p\de(t))^2}{(1+\theta(t))^3}.
        \end{equation*}
            We need to evaluate $\langle \zeta_{\alpha}^{\dagger},p_{\alpha}^{\delta}-p\de \rangle = \left\langle  \Phi\left(\frac{f^{\delta}}{1+\alpha p\de}\right), p_{\alpha}^{\delta}-p\de\right\rangle$, which amounts to estimating
        \begin{equation*}
            \int\limits_{\Gamma}\frac{f^{\delta}(t)}{1+\alpha p\de(t)}(p_{\alpha}^{\delta}(t)-p\de(t))dt.
        \end{equation*}
        By arguing similarly as in the case of exact data, one obtains 
        \begin{align*}
            \int\limits_{\Gamma}\frac{f^{\delta}(t)}{1+\alpha p\de(t)}(p_{\alpha}^{\delta}(t)-p\de(t))dt &= \int\limits_{\Gamma} f^{\delta}(t)(p_{\alpha}^{\delta}(t) -p\de(t))dt - \alpha\int\limits_{\Gamma} Av\de(t)(p_{\alpha}^{\delta}(t) -p\de(t))dt\\
            &+ \alpha\int\limits_{\Gamma} (f(t)-f^{\delta}(t))p\de(t)(p_{\alpha}^{\delta}(t) -p\de(t))dt\\ 
            &+\alpha^2 \int\limits_{\Gamma}\frac{f^{\delta}(t)(p\de(t))^2}{(1+\theta(t))^3}(p_{\alpha}^{\delta}(t)-p\de(t))dt.
        \end{align*}

    By plugging this in  \eqref{noisylemmaeq}, it follows that
    \begin{align*}
        D_R(u_\alpha^{\delta},u\de)+\frac{1}{\alpha} D_{H^*_{f^{\delta}}}^s(-\alpha p_\alpha^{\delta},-\alpha p\de) &=-R(u\de)+R(u_{\alpha}^{\delta}) + \langle A^*p_{\alpha}^{\delta}, u\de-u_{\alpha}^{\delta}\rangle  -\alpha\langle A^*p_{\alpha}^{\delta},v\de\rangle\\
        &+ \alpha\langle A^*p\de, v\de\rangle- \int\limits_{\Gamma} (f(t)-f^{\delta}(t))(p_{\alpha}^{\delta}(t) -p\de(t))dt \\ 
        &+ \alpha\int\limits_{\Gamma} (f(t)-f^{\delta}(t))p\de(t)(p_{\alpha}^{\delta}(t) -p\de(t))dt \\
        &+\alpha^2 \int\limits_{\Gamma}\frac{f^{\delta}(t)(p\de(t))^2}{(1+\theta(t))^3}(p_{\alpha}^{\delta}(t)-p\de(t))dt.
    \end{align*}
    
    As in the proof of  Theorem \ref{mainth}, from \ref{Pn} we get that for  $\alpha>0$ small enough,
        \begin{align*}
            \frac{1}{\alpha} D_{H^*_{f^{\delta}}}^s(-\alpha p_{\alpha}^{\delta},-\alpha p\de) \geq \frac{1}{\alpha}\cdot\frac{2m}{3}\Vert\alpha p_{\alpha}^{\delta}-\alpha p\de\Vert_2^2=\frac{\gamma}{\alpha}\Vert\alpha p_{\alpha}^{\delta}-\alpha p\de\Vert_2^2.
        \end{align*}   

     By employing  Cauchy-Schwarz and Young inequalities, one gets
    \begin{align*}
        D_R(u_\alpha^{\delta},u\de) &\leq -R(u\de)+R(u_{\alpha}^{\delta}) + \langle A^*p_{\alpha}^{\delta}, u\de-\alpha v\de-u_{\alpha}^{\delta}\rangle+\alpha\langle A^*p\de,v\de\rangle\\
        &+\frac{\delta^2}{\alpha}\cdot\frac{\mu(\Gamma)}{\gamma} +\alpha\delta^2\cdot\frac{\Vert p\de\Vert^2_2}{\gamma} + \alpha^3\cdot\frac{\Vert f(p\de)^2\Vert^2_2}{\gamma}+\alpha^3\delta^2\cdot\frac{\Vert (p\de)^2\Vert_2^2}{\gamma}.
    \end{align*}
    
    From \eqref{OC4}, one obtains $\langle A^*p_{\alpha}^{\delta},u\de- \alpha v\de-u_{\alpha}^{\delta}\rangle\leq R(u\de- \alpha v\de)-R(u_{\alpha}^{\delta})$ and
    \begin{align*}
        D_R(u_\alpha^{\delta},u\de) &\leq  R(u\de- \alpha v\de)-R(u\de)+\alpha\langle Av\de,p\de\rangle\\ 
        &+\frac{\delta^2}{\alpha}\cdot\frac{\mu(\Gamma)}{\gamma} +\alpha\delta^2\cdot\frac{\Vert p\de\Vert^2_2}{\gamma} + \alpha^3\cdot\frac{\Vert f(p\de)^2\Vert^2_2}{\gamma}+\alpha^3\delta^2\cdot\frac{\Vert (p\de)^2\Vert_2^2}{\gamma}.
    \end{align*}
    Consequently, one has
    \begin{align*}
        D_R(u_\alpha^{\delta},u\de) &\leq   D_R(u\de-\alpha v\de, u\de) +\frac{\delta^2}{\alpha}\cdot\frac{\mu(\Gamma)}{\gamma} +\alpha\delta^2\cdot\frac{\Vert p\de\Vert^2_2}{\gamma}\\ &+ \alpha^3\cdot\frac{\Vert f(p\de)^2\Vert^2_2}{\gamma}+\alpha^3\delta^2\cdot\frac{\Vert (p\de)^2\Vert_2^2}{\gamma}.
    \end{align*}
\end{proof}

\begin{remark}
    In the context of Theorem \ref{mainthnoise}, if $R$ is assumed to be twice Fr\'echet differentiable, then $D_R(u\de-\alpha v\de,u\de)=O(\alpha^2)$ (see Remark \ref{remarkord}). In this case, the right hand side has the order $O(\alpha^2)+O(\frac{\delta^2}{\alpha})$.
    The choice of $\alpha$ is made such that $\alpha^2\sim\frac{\delta^2}{\alpha}$, i.e., $\alpha\sim\delta^{\frac{2}{3}}$, which gives the rate of convergence $O(\delta^{\frac{4}{3}})$. 
\end{remark}

\subsection{Joint additive Kullback-Leibler residual minimization} \label{KLfidelityjoint}

    Theorem \ref{mainth} is used in the sequel to establish higher order error estimates for the joint Kullback-Leibler  regularization.  
Namely, assume that minimization is performed over the feasible set $L^1_+(\Omega)$, and the penalty term $R$ is the Kullback-Leibler divergence between the variable $u$ and an a priori guess of the solution $u^*$, $$R(u)=KL(u,u^*).$$
This framework was dealt with in the paper \cite{Resmerita07}, which obtained error estimates under the weaker regularity assumption \eqref{SC1}.
In the current analysis we rely on the assumptions of \cite{Resmerita07}, which are are usually satisfied in practical situations such as emission tomography (see the Introduction of the aforementioned paper).

    \begin{corol}
        Assume that $A$ is a Fredholm integral operator of the first kind,
        \begin{equation*}
            \forall t\in\Gamma, \quad Au(t)=\int\limits_{\Omega}{a(t,s)u(s)ds},
        \end{equation*}
        such that the kernel $a(t,s)$ is bounded and bounded away from zero, i.e.
        \begin{equation*}
            \exists\; c_3,c_4>0 \text{ such that } c_3\leq a(t,s)\leq c_4, \text{ a.e.}
        \end{equation*}
        Then the following estimate holds,
        \begin{equation*}
            \forall\alpha>0,  \alpha\text{ sufficiently small}, \quad D_R(u_\alpha,u\de)\leq D_{R}(u\de-\alpha v\de, u\de)+O(\alpha^3).
        \end{equation*}
    \end{corol}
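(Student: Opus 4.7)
The plan is to reduce the statement to Theorem \ref{mainth} by verifying its hypotheses for the joint Kullback–Leibler setting, the crux being the uniform lower bound on $\operatorname{essinf} Au_\alpha$ afforded by the two-sided kernel bounds together with a lower bound on $\|u_\alpha\|_1$. Throughout the proof, I would inherit source condition \eqref{SC3} and the framework of Section \ref{KLfidelity}; note that $R(u)=KL(u,u^*)$ is proper, convex and lower semicontinuous on $L^1(\Omega)$, so the general theory applies.

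First, the minimization in \eqref{optim} is performed over $L^1_+(\Omega)$, so $u_\alpha\geq 0$ a.e. Using $c_3\leq a(t,s)\leq c_4$ together with $u_\alpha\geq 0$, I would derive the pointwise sandwich
\begin{equation*}
    c_3\,\|u_\alpha\|_1 \;\leq\; Au_\alpha(t) \;\leq\; c_4\,\|u_\alpha\|_1 \qquad \text{a.e. } t\in\Gamma.
\end{equation*}
This immediately gives $\operatorname{esssup} Au_\alpha<+\infty$, and reduces both $Au_\alpha\in E$ and the uniform lower bound needed in Theorem \ref{mainth} to showing that $\|u_\alpha\|_1$ is bounded below by a positive constant, uniformly for all sufficiently small $\alpha>0$.

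Second, I would invoke the convergence analysis of \cite{Resmerita07}, which under the present source-condition regime yields $u_\alpha\to u\de$ with respect to the KL divergence $KL(u_\alpha,u\de)$, hence (via a Pinsker-type inequality) in $L^1(\Omega)$. Because $Au\de=f$ and $f\geq c_1>0$ by \ref{P3}, one has $\|u\de\|_1\geq \|f\|_1/c_4>0$, so the $L^1$-continuity of the norm yields $\|u_\alpha\|_1\geq \tfrac{1}{2}\|u\de\|_1$ for all $\alpha$ sufficiently small. Combining with the sandwich above, I may set
\begin{equation*}
    m \;:=\; \tfrac{c_3}{2}\,\|u\de\|_1 \;>\;0,
\end{equation*}
which satisfies $m\leq \operatorname{essinf}Au_\alpha$ uniformly for small $\alpha$. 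Since $Au_\alpha\in L^\infty(\Gamma)$ and its essential infimum is strictly positive, we indeed have $Au_\alpha\in E$.

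All hypotheses of Theorem \ref{mainth} are thus met and the bound $D_R(u_\alpha,u\de)\leq D_R(u\de-\alpha v\de,u\de)+O(\alpha^3)$ follows directly. The main obstacle is the uniform positivity of $\|u_\alpha\|_1$: although intuitively obvious from $Au\de=f\not\equiv 0$, it requires a genuine convergence statement for $(u_\alpha)$, which is why I rely on the $L^1$-convergence established in \cite{Resmerita07} rather than attempting a self-contained minimization-inequality argument.
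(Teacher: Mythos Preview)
Your proposal is correct and follows essentially the same route as the paper: both arguments use the kernel bounds to get $Au_\alpha(t)\geq c_3\|u_\alpha\|_1$, invoke the $L^1$-convergence $u_\alpha\to u\de$ from \cite{Resmerita07} to obtain $\|u_\alpha\|_1\geq\tfrac{1}{2}\|u\de\|_1$ for small $\alpha$, verify $\|u\de\|_1>0$ via $Au\de=f$ and \ref{P3}, and then apply Theorem \ref{mainth}. The only cosmetic differences are that the paper cites a specific corollary in \cite{Resmerita07} for $Au_\alpha\in E$ at each fixed $\alpha$ and argues $\|u\de\|_1>0$ by contradiction, whereas you derive the pointwise sandwich directly and give the explicit bound $\|u\de\|_1\geq\|f\|_1/c_4$.
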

    \begin{proof}
        By employing the Corollary from p.1537 in \cite{Resmerita07}, one gets that for a fixed $\alpha>0$, $Au_{\alpha}$ is minorized and majorized by two positive constants almost everywhere on $\Omega$. It follows that $Au_{\alpha}\in E$.
        Then the proposition from p. 1540 in \cite{Resmerita07} gives the convergence $$\lim\limits_{\alpha\rightarrow0}\Vert u_{\alpha}-u\de\Vert_1=0, $$ which entails
        \begin{equation} \label{lim}
            \lim\limits_{\alpha\rightarrow0}\Vert u_{\alpha}\Vert_1=\Vert u\de\Vert_1.
        \end{equation} 
        If $\Vert u\de\Vert_1=0$, then $u\de=0$ holds a.e., yielding  $f=Au\de=0$ a.e., which contradicts assumption \ref{P3}. Therefore, $\Vert u\de\Vert_1>0$, and \eqref{lim} leads to
        $$\exists\; \alpha_0>0 \text{ such that } \forall \alpha\in(0,\alpha_0), \quad \Vert u_{\alpha}\Vert_1\geq \frac{\Vert u\de\Vert_1}{2}>0.$$
        Consequently, due to the non-negativity of the elements $u_{\alpha}$, one gets
        \begin{align*}
            \forall\alpha\in(0,\alpha_0), \forall t\in\Gamma,\quad Au_{\alpha}(t)&=\int\limits_{\Omega}{a(t,s)u_{\alpha}(s)ds} \geq c_3 \int\limits_{\Omega}{u_{\alpha}(s)ds}=c_3\Vert u_{\alpha}\Vert_1\\&\geq c_3\frac{\Vert u\de\Vert_1}{2}:=m>0.
        \end{align*}
        Therefore, the assumptions of Theorem \ref{mainth} hold and the conclusion follows.
    \end{proof}
    A similar estimate remains valid in the case of noisy data - see Theorem \ref{mainthnoise}.
    \begin{remark}\label{KLest}
         If $R(u) := KL(u, u^*)$, then $D_R(u\de - \alpha v\de, u\de)=KL(u\de-\alpha v\de, u\de)$.
        By employing Theorem 4.1 in \cite{Resmerita05}, based on a Taylor approximation, one gets the higher order error estimate $D_R(u_{\alpha},u\de)=O(\alpha^2)$.
    \end{remark}

    \subsection{Interpretation of the source condition} \label{KLfidelityinterpretation}
    Inspired by \cite{Benning}, we provide now an interpretation of source condition \eqref{SC3}. 

    \begin{prop} \label{interpretation}
    Let $\alpha >0$. 
    Source condition \eqref{SC3} is equivalent to the existence of a function $\overline{v}\in L^1(\Omega)$ such that $A\overline{v}>0$ a.e. and
    \begin{equation}\label{propc} 
        u\de\in\operatornamewithlimits{arg min}_{u\in L^1(\Omega)} \left\{\frac{1}{\alpha}KL(A\overline{v},Au)+R(u)\right\}.
    \end{equation}
    \end{prop}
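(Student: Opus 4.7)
The strategy is to convert \eqref{propc} into a first-order optimality condition, evaluate the resulting subdifferential via Proposition \ref{propsubdiffconj}, and then recognise the condition as a rewriting of \eqref{SC3} under the substitution $\overline{v}\leftrightarrow(v\de,p\de)$.

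Since the objective in \eqref{propc} is proper, convex and lower semicontinuous, \eqref{propc} is equivalent to $0\in\partial\bigl(\tfrac{1}{\alpha}KL(A\overline{v},A\cdot)+R\bigr)(u\de)$. Applying first a subdifferential sum rule and then a chain rule for composition with the bounded linear map $A$---both of which I would justify by a Moreau--Rockafellar constraint qualification, built on the continuity of $KL(A\overline{v},\cdot)$ at $Au\de=f$ (note $f\in E$ by \ref{P3}, and $A\overline{v}>0$ a.e.\ places $A\overline{v}$ in the KL domain)---reduces the inclusion to
\[
-\tfrac{1}{\alpha}A^{*}\bigl(\partial\,KL(A\overline{v},\cdot)(f)\bigr)\cap\partial R(u\de)\neq\emptyset.
\]
Since $f\in E$, Proposition \ref{propsubdiffconj} (applied with $A\overline{v}$ in the role of $f$) gives $\partial\,KL(A\overline{v},\cdot)(f)=\{1-A\overline{v}/f\}$, so that \eqref{propc} becomes
\[
A^{*}\!\left(\tfrac{A\overline{v}-f}{\alpha f}\right)\in\partial R(u\de).
\]

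For the direction \eqref{propc}$\Rightarrow$\eqref{SC3} I define $v\de:=(\overline{v}-u\de)/\alpha\in L^{1}(\Omega)$ and $p\de:=(A\overline{v}-f)/(\alpha f)$. Because $A\overline{v},f\in L^{\infty}(\Gamma)$ and $f$ is bounded below by a positive constant (\ref{P3}), one has $p\de\in L^{\infty}(\Gamma)$; a direct computation yields $fp\de=A\overline{v}-f=A(\overline{v}-u\de)=\alpha Av\de$, hence $fp\de=Av\de$, and the displayed inclusion is exactly $A^{*}p\de\in\partial R(u\de)$. For the converse, given $p\de,v\de$ as in \eqref{SC3}, I set $\overline{v}:=u\de+\alpha v\de\in L^{1}(\Omega)$; then $A\overline{v}=f+\alpha Av\de=f(1+\alpha p\de)$, which is positive a.e.\ by virtue of the positive lower bound on $f$ together with the boundedness of $p\de$ (for $\alpha$ suitably small, or after rescaling the source pair), and substitution reproduces the optimality condition.

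The principal technical hurdle lies in justifying the sum and chain rules in the first step, since the functionals are defined on $L^{1}(\Omega)$ with values passing through $L^{\infty}(\Gamma)$. I plan to exploit the continuity of $KL(A\overline{v},\cdot)$ on $E$, together with $Au\de=f\in E$, to secure the interior-point hypothesis needed for both rules, invoking the integral-functional machinery (Theorem \ref{theorockafellar} and Corollary \ref{cor2rockafellar}) recalled in the preliminaries.
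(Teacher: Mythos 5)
Your proposal is correct and follows essentially the same route as the paper: reduce \eqref{propc} to the inclusion $0\in\partial\bigl(\tfrac{1}{\alpha}KL(A\overline{v},A\cdot)+R\bigr)(u\de)$, justify equality in the sum and chain rules via $Au\de=f\in E=\operatorname{int}_{L^{\infty}(\Gamma)}L^{\infty}_+(\Gamma)$, identify the subgradient $1-A\overline{v}/f$, and pass between $\overline{v}$ and $(p\de,v\de)$ by exactly the substitutions $\overline{v}=u\de+\alpha v\de$ and $v\de=(\overline{v}-u\de)/\alpha$, $p\de=Av\de/f$. The only blemish is a dropped factor of $\alpha$ in your displayed chain $fp\de=A\overline{v}-f=\cdots$ (with your definition $p\de=(A\overline{v}-f)/(\alpha f)$ one has $fp\de=(A\overline{v}-f)/\alpha=Av\de$ directly), which does not affect the argument.
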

    \begin{proof}
        Let $\alpha>0$. 
        Using the optimality conditions and the fact that the optimization problem is convex, relation \eqref{propc} is equivalent to
        \begin{equation} \label{minequiv}
             0\in \partial \left(\frac{1}{\alpha}KL(A\overline{v},A\cdot)+R(\cdot)\right)(u\de).
        \end{equation}
        
        According to the assumptions, one has $Au\de=f\in E$. It is easy to see that $E=\operatorname{int }_{L^{\infty}(\Gamma)}L^{\infty}_+(\Gamma)\subset\operatorname{ dom }KL(A\overline{v},\cdot)$, indicating that 
        \begin{equation}\label{interior}
            Au\de\in\operatorname{int }_{L^{\infty}(\Gamma)}\operatorname{ dom }KL(A\overline{v},\cdot).
        \end{equation}
         By employing the continuity of $A$, one gets $$u\de\in \operatorname{ dom } R\cap\operatorname{int }_{L^{1}(\Omega)}\operatorname{ dom }KL(A\overline{v},A\cdot).$$ 
        Therefore, equality holds in the sum rule for the subdifferential (see e.g. Proposition 5.6, p.26 from Part I, Chapter I in \cite{Ekeland99}), and \eqref{minequiv} is equivalent to
        \begin{equation}\label{sumrule}
            0\in \partial \left(\frac{1}{\alpha}KL(A\overline{v},A\cdot)\right)(u\de)+\partial R(u\de)=\frac{1}{\alpha} \partial \left(KL(A\overline{v},A\cdot)\right)(u\de)+\partial R(u\de).
        \end{equation}
        Because of \eqref{interior}, we get equality in the chain rule for the subdifferential (see e.g. Proposition 5.7 from Part I, Chapter I in \cite{Ekeland99}). More specifically, $$\partial \left(KL(A\overline{v},A\cdot)\right)(u\de)=A^*\left(\partial KL(A\overline{v}, \cdot)(Au\de)\right),$$ so \eqref{sumrule} becomes
        \begin{equation*}
            0\in \frac{1}{\alpha} A^*\left(\partial KL(A\overline{v}, \cdot)(Au\de)\right) + \partial R(u\de).
        \end{equation*}   

        Firstly, suppose that \eqref{SC3} holds.
        It suffices to prove that there exists $\overline{v}\in L^1(\Omega)$ such that $A\overline{v}>0$ a.e. and
        \begin{equation}\label{propc1}
            -\frac{1}{\alpha}A^*\left(1-\frac{A\overline{v}}{f}\right)\in\partial R(u\de) .
        \end{equation}
        From our assumption, we know that $$p\de=\frac{Av\de}{f}.$$ Since $A^*p\de\in\partial R(u\de)$, we also get $$A^*\left(\frac{Av\de}{f}\right)\in\partial R(u\de).$$ 
        Consider $\overline{v}=u\de+\alpha v\de\in L^1(\Omega)$. For $\alpha>0$ sufficiently small, one has $A\overline{v}=f+\alpha Av\de>0$ a.e. because of \ref{P3}. With this choice, \eqref{propc1} is true. 

        Assume now that there is a $\overline{v}\in L^1(\Omega)$ such that $A\overline{v}>0$ a.e. and that \eqref{propc1} holds. Then one has
        \begin{equation}\label{abc}
            A^*\left(\frac{1}{\alpha}\cdot\frac{A(\overline{v}-u\de)}{f}\right)\in\partial R(u\de).
        \end{equation}
        Let now $$v\de=\frac{1}{\alpha}(\overline{v}-u\de)\in L^1(\Omega)$$ and $$p\de=\frac{Av\de}{f}\in L^{\infty}(\Omega),$$ due to the assumption \ref{P3}. 
        From \eqref{abc}, we get $A^*p\de\in\partial R(u\de)$, which ends the proof.
    \end{proof}

\subsection{Variational source condition}\label{KL_VSC}
    As noted in \cite{Grasmair13}, a variational formulation for the source condition exhibits several advantages. For example, it would allow the derivation of an entire range of convergence rates. Motivated by this, we can state the following result, which replaces the aforementioned source condition \eqref{SC3} by a variational one.
    \begin{theo}
        Assume that $Au_{\alpha}\in E$ for any $\alpha>0$,  and  there exists a positive constant $m$ such that $m\leq\inf\{\operatorname{essinf }Au_{\alpha} : \alpha>0, \alpha\text{ sufficiently small}\}$.
        In addition to that, assume that there exist $p\de\in L^{\infty}(\Gamma)$ satisfying $A^*p\de\in\partial R(u\de)$, and a strictly increasing, continuous, concave function $\Phi:[0,+\infty)\rightarrow[0,+\infty)$, such that $\Phi(0)=0$ and the following variational source condition holds,
        \begin{equation}\label{varsc}
            \forall\; p\in L^{\infty}(\Gamma), \quad\int\limits_{\Gamma}{f(t)p\de(t)(p\de(t) -p(t))dt} \leq \Phi(D_{R^*}(A^*p,A^*p\de)).
        \end{equation}
        Then the following holds,
        \begin{equation}\label{concl}
            D_R(u_{\alpha},u\de) \leq 2\Psi(\alpha)+\Phi^{-1}\left(\alpha^2\frac{\Vert f(p\de)^2\Vert^2_2}{4\gamma}\right),    
        \end{equation}
         where $\Psi$ is the conjugate of the convex mapping $t\mapsto\Phi^{-1}(t)$ and $\gamma$ is chosen as in the proof of Theorem \ref{mainth}.
    \end{theo}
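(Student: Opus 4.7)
The plan is to replicate the strategy used in the proof of Theorem \ref{mainth}, replacing the equality-type source condition $fp\de = Av\de$ with the variational inequality \eqref{varsc} at the step where the first-order Taylor term is handled, and then to close the estimate by two Fenchel--Young inequalities associated with the conjugate pair $(\Phi^{-1},\Psi)$.

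First I would start from the fundamental identity \eqref{egalitateBregman} and, exactly as in Theorem \ref{mainth}, pick the subgradient $\zeta_\alpha\de = \Phi(f/(1+\alpha p\de))$ and expand $s\mapsto 1/(1+s)$ around the origin. After the cancellation of $\langle f, p_\alpha-p\de\rangle$, the right-hand side reduces to
\begin{equation*}
\alpha\int_\Gamma f(t)p\de(t)\bigl(p\de(t)-p_\alpha(t)\bigr)\,dt \;+\; \alpha^2\int_\Gamma \frac{f(t)(p\de(t))^2}{(1+\theta(t))^3}\bigl(p_\alpha(t)-p\de(t)\bigr)\,dt.
\end{equation*}
The variational source condition \eqref{varsc} applied with $p=p_\alpha$ bounds the first integral by $\Phi(D_{R^*}(A^*p_\alpha,A^*p\de))$. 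For the quadratic remainder I would argue precisely as in Theorem \ref{mainth}: Cauchy--Schwarz (using $(1+\theta)^{-3}\leq 1$) and Young's inequality, combined with the strong-convexity lower bound $\frac{1}{\alpha}D_{H_f^*}^s(-\alpha p_\alpha,-\alpha p\de)\geq \gamma\alpha\|p_\alpha-p\de\|_2^2$ from \eqref{inegsym}, absorb the cross term against $\frac{1}{\alpha}D_{H_f^*}^s$ and leave a residue of order $\alpha^3\|f(p\de)^2\|_2^2/(4\gamma)$.

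The next step is the primal--dual identification. By Theorem \ref{lemabenninggen} applied to \eqref{OC1} and \eqref{SC1}, one has $u_\alpha\in\partial R^*(A^*p_\alpha)$ and $u\de\in\partial R^*(A^*p\de)$, and a direct Fenchel--Young computation yields $D_{R^*}(A^*p_\alpha,A^*p\de) = D_R(u\de,u_\alpha)$ with the natural choice of subgradients. Combining this with $D_R^s(u_\alpha,u\de) = D_R(u_\alpha,u\de) + D_R(u\de,u_\alpha)$, the chain of estimates above gives
\begin{equation*}
D_R(u_\alpha,u\de) + D_R(u\de,u_\alpha) \;\leq\; \alpha\,\Phi\bigl(D_R(u\de,u_\alpha)\bigr) + \alpha^3\,\frac{\|f(p\de)^2\|_2^2}{4\gamma}.
\end{equation*}

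Finally, I would close the proof by two applications of the Young-type inequality $st \leq \Psi(s)+\Phi^{-1}(t)$, which is precisely the statement that $\Psi$ is the Fenchel conjugate of the convex function $\Phi^{-1}$. A first application with $s=\alpha$ and $t=\Phi(D_R(u\de,u_\alpha))$ gives $\alpha\Phi(D_R(u\de,u_\alpha)) \leq \Psi(\alpha) + D_R(u\de,u_\alpha)$, whose second summand cancels against its counterpart on the left-hand side. A second application with $s=\alpha$ and $t=\alpha^2\|f(p\de)^2\|_2^2/(4\gamma)$ rewrites the cubic remainder as $\Psi(\alpha) + \Phi^{-1}(\alpha^2\|f(p\de)^2\|_2^2/(4\gamma))$. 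Adding the two yields exactly \eqref{concl}. The main obstacle I anticipate is not computational but conceptual: one must recognise (i) that the variational bound on the dual side can be transported to the primal Bregman distance via the duality $D_{R^*}(A^*p_\alpha,A^*p\de)=D_R(u\de,u_\alpha)$, and (ii) that the cubic residue itself should be further split through the same conjugate pair $(\Phi^{-1},\Psi)$ rather than kept as is---this second splitting is what produces the interpolation term $\Phi^{-1}(\alpha^2\cdot)$ and yields a full range of rates, instead of only the cubic rate of Theorem \ref{mainth}.
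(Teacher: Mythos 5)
Your proposal is correct and follows essentially the same route as the paper: the Taylor expansion from Theorem \ref{mainth}, the variational inequality \eqref{varsc} applied at $p=p_\alpha$ for the first-order term, the strong-convexity absorption of the quadratic remainder leaving the $\alpha^3\Vert f(p\de)^2\Vert_2^2/(4\gamma)$ residue, and two Fenchel--Young splittings via the pair $(\Phi^{-1},\Psi)$. The only difference is that you explicitly justify the identification $D_{R^*}(A^*p_\alpha,A^*p\de)=D_R(u\de,u_\alpha)$ via Theorem \ref{lemabenninggen} and the Fenchel--Young equality, a step the paper's proof uses implicitly.
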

    \begin{proof}
        In order to show \eqref{concl}, we  evaluate the term $\langle\zeta_{\alpha}^{\dagger},p_{\alpha}-p\de\rangle$ of \eqref{egalitateBregman}.
        By plugging equality \eqref{taylordezv} in \eqref{egalitateBregman}, one gets
        \begin{align}
            D_R^s(u_\alpha,u\de)+\frac{1}{\alpha} D_{H^*_f}^s(-\alpha p_\alpha,-\alpha p\de) &= -\alpha \int\limits_{\Gamma}{f(t)p\de(t)(p_{\alpha}(t) -p\de(t))dt} \nonumber\\
            &+\alpha^2 \int\limits_{\Gamma}{\frac{f(t)(p\de(t))^2}{(1+\theta(t))^3}(p_{\alpha}(t) -p\de(t))dt}. \label{eqvar}
        \end{align}
        Assumption \eqref{varsc} and Young's inequality imply
        \begin{gather*}
            \alpha\int\limits_{\Gamma}{f(t)p\de(t)(p\de(t)-p_{\alpha}(t))dt} \leq \alpha \Phi(D_{R^*}(A^*p_{\alpha},A^*p\de))\leq \Psi(\alpha)+\Psi^*(\Phi(D_R(u\de,u_{\alpha})))\\ = \Psi(\alpha)+D_R(u\de,u_{\alpha})
        \end{gather*}
        and, similarly to \eqref{rate},
        \begin{align*}
            \alpha^2 \int\limits_{\Gamma}{\frac{f(t)(p\de(t))^2}{(1+\theta(t))^3}(p_{\alpha}(t) -p\de(t))dt} \leq \alpha^3\frac{\Vert f(p\de)^2\Vert^2_2}{4\gamma}+\frac{1}{\alpha} D_{H^*_{f}}^s(-\alpha p_{\alpha},-\alpha p\de).
        \end{align*}
        For $\alpha>0$ small enough, one has 
        \begin{equation*}
            \alpha^3\frac{\Vert f(p\de)^2\Vert^2_2}{4\gamma} = \alpha\cdot\alpha^2\frac{\Vert f(p\de)^2\Vert^2_2}{4\gamma} \leq\Psi(\alpha)+\Psi^*\left(\alpha^2\frac{\Vert f(p\de)^2\Vert^2_2}{4\gamma}\right),
        \end{equation*}
        which, combined with \eqref{eqvar}, gives the conclusion.
    \end{proof}
    
    The following result takes into account noisy data, too.
    \begin{theo}
        Assume that $Au_{\alpha}^{\delta}\in E$  for any $\alpha, \delta>0$,  and  there exists a positive constant $m$ with $m\leq\inf\{\operatorname{essinf }Au_{\alpha}^{\delta} : \alpha,\delta>0, \alpha, \delta \text{ sufficiently small}\}$.
        In addition, assume that there exist $p\de\in L^{\infty}(\Gamma)$ with $A^*p\de\in\partial R(u\de)$,  and a strictly increasing, continuous, concave function $\Phi:[0,+\infty)\rightarrow[0,+\infty)$,  such that $\Phi(0)=0$ and 
        \begin{equation*}
            \forall p\in L^{\infty}(\Gamma),\quad \int\limits_{\Gamma}{f(t)p\de(t)(p\de(t) -p(t))dt} \leq \Phi(D_{R^*}(A^*p,A^*p\de)).
        \end{equation*}
        Then the following holds, $$D_R(u_{\alpha}^\delta,u\de) \leq \frac{\delta^2}{\alpha}\cdot\frac{\mu(\Gamma)}{\gamma} + 2\Psi(\alpha)+\Phi^{-1}\left(\delta^2\frac{\Vert p\de\Vert_2^2}{\gamma}+\alpha^2\delta^2\frac{\Vert(p\de)^2\Vert_2^2}{\gamma}+\alpha^2\frac{\Vert f(p\de)^2\Vert^2_2}{\gamma}\right),$$ where $\Psi$ is the conjugate of the convex mapping $t\mapsto\Phi^{-1}(t)$ and $\gamma$ was chosen as in Theorem \ref{mainthnoise}.
    \end{theo}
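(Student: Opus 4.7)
The plan is to follow the blueprint of the preceding variational source condition theorem (exact data case) while grafting in the noise bookkeeping developed in Theorem \ref{mainthnoise}. I would begin from Lemma \ref{noisylemma}, namely
\[ D_R^s(u_\alpha^\delta, u\de) + \frac{1}{\alpha} D_{H^*_{f^\delta}}^s(-\alpha p_\alpha^\delta, -\alpha p\de) = \langle f, p\de - p_\alpha^\delta \rangle + \langle \zeta_\alpha^\dagger, p_\alpha^\delta - p\de \rangle, \]
and expand the subgradient $\zeta_\alpha^\dagger = \Phi\bigl(f^\delta/(1+\alpha p\de)\bigr)$ via the same second-order Taylor expansion of $s \mapsto 1/(1+s)$ around $0$ that was employed in Theorem \ref{mainthnoise}. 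Writing $f^\delta = f + (f^\delta - f)$ in each of the three resulting integrals produces a main term $\alpha \int f p\de(p\de - p_\alpha^\delta)\,dt$, a collection of noise cross-terms, and a Taylor remainder.

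Next, I would apply the variational source condition \eqref{varsc} to the main term, obtaining $\alpha \Phi(D_{R^*}(A^*p_\alpha^\delta, A^*p\de))$, and then Young's inequality with the conjugate pair $(\Phi^{-1}, \Psi)$ to get $\Psi(\alpha) + D_R(u\de, u_\alpha^\delta)$, using the Fenchel-duality identity $D_{R^*}(A^*p_\alpha^\delta, A^*p\de) = D_R(u\de, u_\alpha^\delta)$ as in the exact-data proof. The term $D_R(u\de, u_\alpha^\delta)$ subsequently combines with $D_R^s(u_\alpha^\delta, u\de) = D_R(u_\alpha^\delta, u\de) + D_R(u\de, u_\alpha^\delta)$ on the left-hand side, isolating $D_R(u_\alpha^\delta, u\de)$. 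For the zeroth-order noise term $\int (f^\delta - f)(p_\alpha^\delta - p\de)\,dt$, which arises from combining $\langle f, p\de - p_\alpha^\delta\rangle$ with the leading piece of the Taylor expansion, I would use $\|f^\delta - f\|_2 \le \delta \sqrt{\mu(\Gamma)}$ together with Young's inequality to produce $\tfrac{\delta^2}{\alpha} \cdot \tfrac{\mu(\Gamma)}{\gamma}$ plus a term absorbed by the strong-convexity lower bound $\tfrac{1}{\alpha}D_{H^*_{f^\delta}}^s(-\alpha p_\alpha^\delta, -\alpha p\de) \ge \alpha \gamma \|p_\alpha^\delta - p\de\|_2^2$ established in Theorem \ref{mainthnoise}.

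The remaining two noise contributions, $\alpha \int (f^\delta - f) p\de(p\de - p_\alpha^\delta)\,dt$ and $\alpha^2 \int (f^\delta - f)(p\de)^2 (1+\theta)^{-3}(p_\alpha^\delta - p\de)\,dt$, together with the deterministic remainder $\alpha^2 \int f(p\de)^2(1+\theta)^{-3}(p_\alpha^\delta - p\de)\,dt$, are each treated by Cauchy--Schwarz followed by Young's inequality, again absorbing the $\|p_\alpha^\delta - p\de\|_2^2$ parts into the same symmetric-Bregman lower bound. The unabsorbed residuals have the respective sizes $\delta^2\|p\de\|_2^2/\gamma$, $\alpha^2\delta^2\|(p\de)^2\|_2^2/\gamma$ and $\alpha^2 \|f(p\de)^2\|_2^2/\gamma$; a second Young step of the form $\alpha \cdot (\cdot) \le \Psi(\alpha) + \Phi^{-1}(\cdot)$, combined with the convexity of $\Phi^{-1}$ (equivalent to the concavity of $\Phi$), lets me consolidate them into a single $\Phi^{-1}$ of their sum, accounting for the second $\Psi(\alpha)$ in the stated bound.

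The main obstacle will be the combinatorial bookkeeping: balancing the Young constants across four or five inequalities so that the coefficients on $\|p_\alpha^\delta - p\de\|_2^2$ add up to no more than $\alpha\gamma$, while the residual noise terms aggregate cleanly inside $\Phi^{-1}(\,\cdot\,)$ with the precise constants stated in the theorem. Verifying that the Taylor remainder can still be estimated via $|1/(1+\theta)^3|$ being uniformly bounded on the relevant range of $\alpha$ (as in the derivation of \eqref{q1}) is a minor but necessary ingredient. Once all pieces are absorbed or isolated, the inequality collapses to the claimed estimate.
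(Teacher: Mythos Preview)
Your proposal is correct and follows essentially the same approach as the paper, whose proof is only the one-liner ``similar to the one for the exact data setting, and relies on employing Young's inequality several times''; you have accurately reconstructed what this entails by combining the Taylor expansion and noise bookkeeping of Theorem~\ref{mainthnoise} with the $\Psi$/$\Phi^{-1}$ Young-inequality mechanism of the exact-data variational theorem. One small clarification: to land on exactly $2\Psi(\alpha)$ you should factor out a single $\alpha$ from the sum of the three residuals and apply \emph{one} Young step $\alpha\cdot S \le \Psi(\alpha) + \Phi^{-1}(S)$, rather than three separate Young steps followed by convexity of $\Phi^{-1}$ (which would produce $4\Psi(\alpha)$ in total); the convexity of $\Phi^{-1}$ is not actually needed here.
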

    \begin{proof}
        The proof is similar to the one for the exact data setting, and relies on employing Young's inequality several times.
    \end{proof}

    The next statement follows the idea of Lemma 5.1 in \cite{Grasmair13} and provides a connection between the variational source condition from the previous theorems and the operator range source condition. However, the latter refers to the range of $A^{**}$, instead of $A$, since we work in non-reflexive Banach spaces.
    \begin{theo}\label{connection}
        Assume  there exists $p\de\in L^{\infty}(\Gamma)$ such that $A^*p\de\in\partial R(u\de)$. Let  $\Phi:[0,+\infty)\rightarrow[0,+\infty)$ be a strictly increasing, continuous, concave  function such that $\Phi(0)=0$ and 
        \begin{equation*}
            \forall p\in L^{\infty}(\Gamma), \quad\int\limits_{\Gamma}{f(t)p\de(t)(p\de(t) -p(t))dt} \leq \Phi(D_{R^*}(A^*p,A^*p\de)).
        \end{equation*}
        If $R^*$ is twice Fr\'{e}chet differentiable at $A^*p\de$ and $\Phi(t)\sim t^{\frac{1}{2}}$ as $t\rightarrow0$, then $fp\de\in \operatorname{Ran }(A^{**})$ holds.
    \end{theo}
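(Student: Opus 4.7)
\emph{Proof plan.} Following the template of Lemma 5.1 in \cite{Grasmair13}, the strategy is to combine the variational source condition \eqref{varsc} with the regularity of $R^{*}$ to obtain a Lipschitz-type bound on an appropriate linear form, and then invoke the Hahn--Banach theorem to produce an element of $X^{**}=(L^{\infty}(\Omega))^{*}$ whose image under $A^{**}$ is the canonical embedding of $fp\de$ in $Y^{**}$.

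First I would localise the variational inequality. Since $\Phi(t)\sim t^{1/2}$ as $t\to 0$, there exist $C_{1},t_{0}>0$ with $\Phi(t)\leq C_{1}t^{1/2}$ for $t\in[0,t_{0}]$. By Theorem \ref{lemabenninggen} applied to $R$, $u\de\in\partial R^{*}(A^{*}p\de)$, and the assumed twice Fr\'{e}chet differentiability of $R^{*}$ at $A^{*}p\de$ gives a second-order Taylor estimate
\begin{equation*}
    D_{R^{*}}(A^{*}p,A^{*}p\de)\leq M\,\|A^{*}p-A^{*}p\de\|_{\infty}^{2}
\end{equation*}
for every $p\in L^{\infty}(\Gamma)$ with $\|A^{*}(p-p\de)\|_{\infty}$ sufficiently small, where $M$ is controlled by the operator norm of $(R^{*})''(A^{*}p\de)$. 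Plugging this into \eqref{varsc} yields, locally,
\begin{equation*}
    \int_{\Gamma}f(t)p\de(t)\bigl(p\de(t)-p(t)\bigr)\,dt\leq C_{1}\sqrt{M}\,\|A^{*}(p-p\de)\|_{\infty}.
\end{equation*}

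Next I would linearise around $p\de$ by substituting $p=p\de+\lambda q$ for arbitrary $q\in L^{\infty}(\Gamma)$ and $\lambda>0$ small, dividing by $\lambda$, and then replacing $q$ by $-q$. This produces the homogeneous bound
\begin{equation*}
    \left|\int_{\Gamma}f(t)p\de(t)q(t)\,dt\right|\leq C_{1}\sqrt{M}\,\|A^{*}q\|_{\infty}\qquad\forall\,q\in L^{\infty}(\Gamma),
\end{equation*}
which shows that the linear form $\psi_{0}:A^{*}q\mapsto\int_{\Gamma}fp\de q\,dt$ is well-defined on $V:=A^{*}(L^{\infty}(\Gamma))\subset L^{\infty}(\Omega)=X^{*}$ (any ambiguity is killed by $A^{*}q=0$) and bounded with norm at most $C_{1}\sqrt{M}$. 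Hahn--Banach extends $\psi_{0}$ to some $\tilde{\psi}\in(L^{\infty}(\Omega))^{*}=X^{**}$, and for every $q\in L^{\infty}(\Gamma)$ viewed in $Y^{*}$ via $L^{\infty}(\Gamma)\hookrightarrow L^{1}(\Gamma)\hookrightarrow Y^{*}$ one computes
\begin{equation*}
    \langle A^{**}\tilde{\psi},q\rangle_{Y^{**},Y^{*}}=\tilde{\psi}(A^{*}q)=\int_{\Gamma}fp\de q\,dt=\langle \iota_{Y}(fp\de),q\rangle_{Y^{**},Y^{*}},
\end{equation*}
identifying $A^{**}\tilde{\psi}$ with the canonical embedding $\iota_{Y}(fp\de)$ on the $L^{\infty}$-part of $Y^{*}$.

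The main obstacle is the non-reflexivity of $Y=L^{\infty}(\Gamma)$: by the decomposition \eqref{decomp}, $Y^{*}$ also contains singular functionals which are not directly probed by \eqref{varsc}, while Hahn--Banach extensions are not unique. A clean way to close the argument is to invoke the standard characterisation that $y^{**}\in\operatorname{Ran}(A^{**})$ holds iff $|\langle y^{**},y^{*}\rangle|\leq C\,\|A^{*}y^{*}\|_{X^{*}}$ for some $C>0$ and all $y^{*}\in Y^{*}$, and then to upgrade the bound above from $q\in L^{\infty}(\Gamma)$ to arbitrary $y^{*}\in Y^{*}$; this is the delicate point, requiring a careful splitting argument along the lines of Theorem \ref{theorockafellar} in order to treat the absolutely continuous and singular components separately.
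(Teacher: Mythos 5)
Your proposal follows the same route as the paper's proof through all of its substantive steps: the second-order Taylor expansion of $D_{R^*}$ at $A^*p\de$ (using $u\de\in\partial R^*(A^*p\de)$), the local bound $\Phi(t)\leq C_1t^{1/2}$, and the homogenization by scaling $p-p\de$ to obtain the Lipschitz-type estimate $\left\vert\int_\Gamma f p\de q\right\vert\leq C\Vert A^*q\Vert$. The closing device you name --- that $y^{**}\in\operatorname{Ran}(A^{**})$ if and only if $\vert\langle y^{**},y^*\rangle\vert\leq C\Vert A^*y^*\Vert$ for all $y^*\in Y^*$ --- is precisely Lemma 8.21 of \cite{Scherzer08}, which is what the paper cites; your Hahn--Banach construction is essentially the proof of that lemma written out.

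Where your proposal stops short is the point you yourself flag: the estimate is only derived for $p\in L^\infty(\Gamma)$, because the variational source condition \eqref{varsc} is only assumed for such $p$, whereas the range characterization requires the bound for every $y^*\in(L^\infty(\Gamma))^*$, singular functionals included. A density argument upgrades the bound from $L^\infty(\Gamma)$ to $L^1(\Gamma)$ (both sides are continuous with respect to $\Vert\cdot\Vert_1$), but the singular part of $(L^\infty(\Gamma))^*$ is genuinely not reached this way, and you do not supply the ``splitting argument'' you allude to; so as written the proof is incomplete at its final step. You should be aware, however, that the paper's own proof makes exactly the same jump without comment: it derives the inequality for $\tilde p=p\de-p$ with $p$ ranging over $L^\infty(\Gamma)$ and then restates it with the quantifier $\forall\, p\in(L^\infty(\Gamma))^*$ before invoking Lemma 8.21. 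You have therefore correctly isolated the one delicate point of the argument, but you have not closed it.
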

    \begin{proof}
        The Fr\'{e}chet differentiability of $R^*$ yields, for any $p\in (L^{\infty}(\Gamma))^*$,
        \begin{align*}
            D_{R^*}(A^*p,A^*p\de)&=R^*(A^*p)-R^*(A^*p\de)-\langle u\de,A^*(p-p\de)\rangle\\
            &=(R^*)''(A^*p\de)(A^*(p-p\de),A^*(p-p\de))+o(\Vert A^*(p-p\de)\Vert^2).
        \end{align*}
        Denote $\Tilde{p}:=p\de-p$. The previous equality and the variational inequality above imply
        \begin{gather*}
            \int\limits_{\Gamma}{f(t)p\de(t)\Tilde{p}(t)dt} \leq \Phi((R^*)''(A^*p\de)(A^*\Tilde{p},A^*\Tilde{p})+o(\Vert A^*\Tilde{p}\Vert^2)).
        \end{gather*}
        This, together with $(R^*)''(A^*p\de)(A^*\Tilde{p},A^*\Tilde{p})\leq\Vert (R^*)''(A^*p\de)\Vert \Vert A^*\Tilde{p}\Vert^2$ and the monotonicity of $\Phi$, leads to
        \begin{equation*}
            \int\limits_{\Gamma}{f(t)p\de(t)\Tilde{p}(t)dt} \leq \Phi(\Vert (R^*)''(A^*p\de)\Vert \Vert A^*\Tilde{p}\Vert^2+o(\Vert A^*\Tilde{p}\Vert^2)).
        \end{equation*}
        If $\Phi(t)\sim t^{\frac{1}{2}}$ as $t\rightarrow0$, then there exists $\Tilde{C}>0$ such that for any $\Vert A^*\Tilde{p}\Vert$ sufficiently small,
        \begin{equation*}
            \int\limits_{\Gamma}{f(t)p\de(t)\Tilde{p}(t)dt} \leq \Tilde{C}\Vert (R^*)''(A^*p\de)\Vert^{\frac{1}{2}} \Vert A^*\Tilde{p}\Vert.
        \end{equation*}
        
        In order to prove that this holds in fact for any $p\in L^{\infty}(\Gamma)$, consider $\epsilon>0$ small enough such that the previous inequality holds with $A^*\Tilde{p}$ replaced by $A^*(\epsilon p)$. By multiplying it with $\frac{1}{\epsilon}$ and using the positive homogeneity of both sides, the following holds,
        \begin{equation*}
            \forall p\in (L^{\infty}(\Gamma))^*,\quad \int\limits_{\Gamma}{f(t)p\de(t)p(t)dt} \leq \Tilde{C}\Vert (R^*)''(A^*p\de)\Vert^{\frac{1}{2}} \Vert A^*p\Vert.
        \end{equation*}
        Lemma 8.21 from \cite{Scherzer08} implies that this is equivalent to $fp\de\in\operatorname{Ran }(A^{**})$.
    \end{proof}

\section{Convex integrands as data fidelities} \label{generalfidelities}
    Our goal here is to derive higher order error estimates for variational regularization formulated with a more general convex non-quadratic data fidelity. In order to pursue this, we unveil a stronger source condition that turns out to relate a solution of the original problem to both ranges of the adjoint operator and of the operator itself, namely, involving both primal and dual problems. This formulation is consistent with the previous finding, reducing to \eqref{SC3} in the case when the data fidelity is the Kullback-Leibler functional. Additionally, we establish similar error estimates  under a variational source condition - compare to the particular setting of Subsection \ref{KL_VSC}. For the sake of simplicity, we perform the analysis in the exact data case, since the slightly different context of noisy data can be dealt with analogously.
    
    Consider  the framework of a more general convex data fidelity function $H_f$ defined by 
    \begin{equation}\label{defhf}
            H_f(z)=\int\limits_{\Gamma}\varphi_f(t,z(t)) dt,
    \end{equation}
    where $\varphi_f:\Gamma\times\mathbb{R}\rightarrow(-\infty,+\infty]$ is a proper, convex and lower semicontinuous function.  
    Suppose that the following source condition holds,
     \begin{gather}
            \exists\;p\de\in Y^* \text{ single-valued and bounded a.e.}, \exists\; v\de\in X \text{ with } A^*p\de\in\partial R(u\de) \text{ and } \nonumber\\ \text{ for almost any } t\in\Gamma, \;
            p\de(t)((\varphi_f(t,\cdot))^*)''(0)=Av\de(t). \label{SC5}
    \end{gather}
     We will reformulate \eqref{SC5} in a way that makes use of the second order derivative of $\varphi_f(t,\cdot)$. This approach will benefit from avoiding calculating the Fenchel conjugate near the origin, which can be hard to achieve in several situations (e.g., for the Itakura-Saito distance).
     
    Additionally, suppose that there exists $\zeta_{\alpha}^{\dagger}\in\partial H_f^*(-\alpha p\de)$ such that 
    \begin{equation}\label{gradient}
         \langle\zeta_{\alpha}^{\dagger},p_{\alpha}-p\de\rangle=\int\limits_{\Gamma}((\varphi_f(t,\cdot))^*)'(-\alpha p\de(t))(p_{\alpha}(t)-p\de(t))dt.
    \end{equation}

    \begin{theo}\label{generalcase}
        Assume that $\varphi_f$ is chosen such that, for almost any $t\in\Gamma$,
         \begin{enumerate}[start=1,label={(C\arabic*)}]
            \item $\varphi_f(t,\cdot)$ is three times differentiable on a neighborhood of $f(t)$, \label{a1}
            \item $(\varphi_f(t,\cdot))'(f(t))=0$, $(\varphi_f(t,\cdot))''(f(t))\neq0$, \label{a2}
            \item There exists $\Tilde{C}$ such that, for almost all $t\in\Gamma$, there exists a neighborhood $U(t)$ of $f(t)$ having the property that $\frac{(\varphi_f(t,\cdot))'''(z)}{((\varphi_f(t,\cdot))'')^3(z)}$ is uniformly bounded by $\Tilde{C}$ on $U(t)$, i.e., $$\forall z\in U(t), \quad \left\vert\frac{(\varphi_f(t,\cdot))'''(z)}{((\varphi_f(t,\cdot))'')^3(z)}\right\vert\leq\Tilde{C}$$. \label{a3}
        \end{enumerate}
          Moreover, assume  that
    \begin{equation}\label{ineqbd}
            \text{there exists } \;C>0 \text{ such that } \forall\;\alpha>0 \text{ sufficiently small, }  D_{H_f^*}^s(-\alpha p_{\alpha}, -\alpha p\de)\geq C\alpha^2\Vert p_{\alpha}-p\de\Vert_2^2
        \end{equation}
        and that the source condition \eqref{SC5} for the dual problem holds, which can also be expressed as
        \begin{gather}
            \exists\;p\de\in Y^* \text{ single-valued and bounded a.e.}, \exists\; v\de\in X \text{ with } A^*p\de\in\partial R(u\de) \text{ and }\nonumber\\ \text{ for almost any } t\in\Gamma,   \;
            \frac{p\de(t)}{(\varphi_f(t,\cdot))''(f(t))}=Av\de(t). \label{SC4}
        \end{gather}
        Then the following estimate holds, 
        $$\forall \; \alpha > 0 \text{ sufficiently small}, \quad D_R(u_{\alpha},u\de)\leq D_R(u\de-\alpha v\de, u\de)+O(\alpha^{3}).$$
    \end{theo}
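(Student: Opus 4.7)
The plan is to verify the hypothesis \eqref{egalit} of Theorem \ref{bregmanestim} with $s(\alpha)=\alpha$, $r=3$ and $v=v\de$, so that the conclusion follows directly. Concretely, starting from \eqref{gradient}, I would expand the integrand to third order in $\alpha$ and show that the linear term is precisely $-\alpha\langle Av\de,p_\alpha-p\de\rangle$ modulo $\langle f,p_\alpha-p\de\rangle$, while the quadratic remainder is absorbed against $\frac{1}{\alpha}D^s_{H_f^*}(-\alpha p_\alpha,-\alpha p\de)$ using \eqref{ineqbd} and Young's inequality.

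The first step is to translate the assumptions \ref{a1}--\ref{a3} through Fenchel conjugacy so that they become conditions on the derivatives of the scalar function $\psi_t:=(\varphi_f(t,\cdot))^*$ at the origin. By Theorem \ref{lemabenninggen} and \ref{a2}, from $(\varphi_f(t,\cdot))'(f(t))=0$ one gets $\psi_t'(0)=f(t)$. The standard inverse-function-type identities for Fenchel conjugates (Lemma \ref{lemabenning}) then give
\begin{equation*}
\psi_t''(0)=\frac{1}{(\varphi_f(t,\cdot))''(f(t))},\qquad \psi_t'''(y)=-\frac{(\varphi_f(t,\cdot))'''(\psi_t'(y))}{\bigl((\varphi_f(t,\cdot))''(\psi_t'(y))\bigr)^{3}},
\end{equation*}
valid on a neighborhood of $0$ by \ref{a1}. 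In particular \ref{a3} yields a uniform bound $|\psi_t'''(y)|\leq\widetilde C$ for $y$ in a neighborhood of $0$, hence for $y=-\alpha p\de(t)$ once $\alpha$ is small enough, thanks to the essential boundedness of $p\de$ postulated in \eqref{SC5}.

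The second step is the Taylor expansion. For almost every $t\in\Gamma$ and $\alpha>0$ small, there exists $\theta_\alpha(t)$ between $0$ and $-\alpha p\de(t)$ such that
\begin{equation*}
\psi_t'(-\alpha p\de(t))=f(t)-\alpha p\de(t)\psi_t''(0)+\tfrac{1}{2}\alpha^2(p\de(t))^2\psi_t'''(\theta_\alpha(t)).
\end{equation*}
Inserting this in \eqref{gradient} and invoking the equivalent form \eqref{SC4} of the source condition, which gives $p\de(t)\psi_t''(0)=Av\de(t)$, one obtains
\begin{equation*}
\langle\zeta_\alpha^\dagger,p_\alpha-p\de\rangle=\langle f,p_\alpha-p\de\rangle-\alpha\langle Av\de,p_\alpha-p\de\rangle+\tfrac{\alpha^2}{2}\int_\Gamma (p\de(t))^2\psi_t'''(\theta_\alpha(t))(p_\alpha(t)-p\de(t))\,dt.
\end{equation*}

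The third and decisive step is to bound the quadratic remainder in combination with the symmetric Bregman distance. By the uniform bound on $\psi_t'''$ and essential boundedness of $p\de$, the function $(p\de)^2\psi_\cdot'''(\theta_\alpha(\cdot))$ is uniformly bounded in $L^2(\Gamma)$ by a constant $M$ independent of $\alpha$. Cauchy--Schwarz gives
\begin{equation*}
\Bigl|\tfrac{\alpha^2}{2}\int_\Gamma (p\de)^2\psi_t'''(\theta_\alpha)(p_\alpha-p\de)\,dt\Bigr|\leq \tfrac{\alpha^2 M}{2}\Vert p_\alpha-p\de\Vert_2,
\end{equation*}
and Young's inequality $ab\leq\tfrac{a^2}{4C}+Cb^2$ with $a=\alpha^{3/2}M/2$ and $b=\alpha^{1/2}\Vert p_\alpha-p\de\Vert_2$ yields
\begin{equation*}
\tfrac{\alpha^2 M}{2}\Vert p_\alpha-p\de\Vert_2\leq \tfrac{\alpha^3 M^2}{16C}+C\alpha\Vert p_\alpha-p\de\Vert_2^2.
\end{equation*}
The last term is dominated by $\tfrac{1}{\alpha}D^s_{H_f^*}(-\alpha p_\alpha,-\alpha p\de)$ thanks to \eqref{ineqbd}, so the remainder minus this symmetric Bregman distance is $O(\alpha^3)$. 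This establishes \eqref{egalit} with $s(\alpha)=\alpha$ and $r=3$, and Theorem \ref{bregmanestim} yields the claimed estimate.

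I expect the main obstacle to lie in the first step: producing the third-derivative bound for $\psi_t$ uniformly in $t$ from the hypothesis \ref{a3} on $\varphi_f(t,\cdot)$, and ensuring that the argument $-\alpha p\de(t)$ lies in the image under $(\varphi_f(t,\cdot))'$ of the neighborhood $U(t)$ for almost every $t$ once $\alpha$ is small. Assumption \ref{a2} (nonvanishing second derivative at the minimum) together with the essential boundedness of $p\de$ make this possible, but the measurability bookkeeping is the only delicate point; the remaining estimates are routine Cauchy--Schwarz/Young manipulations.
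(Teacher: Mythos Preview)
Your proposal is correct and follows essentially the same route as the paper: translate \ref{a1}--\ref{a3} into derivative identities for $(\varphi_f(t,\cdot))^*$ at the origin via the inverse-function relations, Taylor expand $((\varphi_f(t,\cdot))^*)'(-\alpha p\de(t))$ to second order, identify the zeroth and first terms as $f(t)$ and $-\alpha Av\de(t)$ using \eqref{SC4}, bound the quadratic remainder uniformly by \ref{a3}, and absorb it against $\frac{1}{\alpha}D^s_{H_f^*}$ via Cauchy--Schwarz, Young, and \eqref{ineqbd} before invoking Theorem \ref{bregmanestim}. The only cosmetic difference is that the paper uses the Implicit Function Theorem explicitly to derive the formula for $((\varphi_f(t,\cdot))^*)'''$, whereas you cite Lemma \ref{lemabenning} directly; your anticipated obstacle about the neighborhoods is exactly the point the paper handles with the continuity of $(\psi_f(t,\cdot))^{-1}$.
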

    \begin{proof}
            We seek to evaluate $\langle\zeta_{\alpha}^{\dagger},p_{\alpha}-p\de\rangle$ with the aim of obtaining the sum of $\langle f,p_{\alpha}-p\de\rangle$, a first order term (with respect to $\alpha$), for which the source condition will be used, and a remainder of second order. 

            For any $t\in\Gamma$, let us make the following notation $\psi_f(t,\cdot):=(\varphi_f(t,\cdot))'$.
            According to \ref{a2}, $\psi_f(t,\cdot)(f(t))=0$, and $\psi_f(t,\cdot)'(f(t))\neq0$, one can employ the Implicit Function Theorem to get the existence of an open set $U\subset\mathbb{R}$ such that $0\in U$ and $(\psi_f(t,\cdot))^{-1}$ exists and is differentiable on $U$ with 
        \begin{equation}\label{IFT}
         \forall \;\tau\in U,\quad ((\psi_f(t,\cdot))^{-1})'(\tau)=\frac{1}{(\psi_f(t,\cdot))'(\nu)}, 
        \end{equation}
        where $\nu=(\psi_f(t,\cdot))^{-1}(\tau)$.
            The previous point, along with Lemma \ref{lemabenning}, implies that $(\varphi_f(t,\cdot))^*$ is differentiable on $U$, and 
            \begin{equation}\label{derivp}
                ((\varphi_f(t,\cdot))^*)'=(\psi_f(t,\cdot))^{-1}.
            \end{equation}

            Moreover, by employing \eqref{IFT}, one can also compute $(((\varphi_f(t,\cdot))')^{-1})''$ in $U$, i.e.,
        \begin{gather*} 
            \forall\; \tau\in U,\quad ((\psi_f(t,\cdot))^{-1})''(\tau)=-\frac{\psi_f(t,\cdot)''(\nu)\cdot((\psi_f(t,\cdot))^{-1})'(\tau)}{((\psi_f(t,\cdot))'(\nu))^2}  =-\frac{\psi_f(t,\cdot)''(\nu)}{((\psi_f(t,\cdot))'(\nu))^3},
        \end{gather*}
        which also shows that $(\varphi_f(t,\cdot))^*$ is three times locally differentiable, and 
            \begin{equation} \label{derivord3}
                \forall\; \tau\in U, \quad  ((\varphi_f(t,\cdot))^*)'''(\tau) = -\frac{\psi_f(t,\cdot)''(\nu)}{((\psi_f(t,\cdot))'(\nu))^3}.
            \end{equation}
        
        Since $p\de$ is assumed to be bounded, then for any $t$ fixed and for $\alpha$ sufficiently small, $-\alpha p\de(t)$ is in a neighborhood of $0$. The Taylor expansion for $s\mapsto ((\varphi_f(t,\cdot))^*)'(s)$ around $0$ gives the existence of $\theta(t)$ on the line segment between $0$ and $-\alpha p\de(t)$ (where $t\in\Gamma$ fixed) such that 
        \begin{align*}
            ((\varphi_f(t,\cdot))^*)'(-\alpha p\de(t))&=((\varphi_f(t,\cdot))^*)'(0)-\alpha p\de(t)[(\varphi_f(t,\cdot))^*]''(0) \\
            &+\frac{1}{2}\alpha^2(p\de(t))^2((\varphi_f(t,\cdot))^*)'''(\theta(t)).
        \end{align*}
        We now evaluate the terms of the right-hand side individually. 
        By employing \eqref{derivp}, the first one becomes $((\varphi_f(t,\cdot))^*)'(0)=(\psi_f(t,\cdot))^{-1}(0)$, which together with the condition $\psi(t,\cdot)(f(t))=0$ implies $$((\varphi_f(t,\cdot))^*)'(0)=f(t).$$ 
        For the second term, we make use of the source condition \eqref{SC4}. 
        One has 
        \begin{equation}\label{53}
            ((\varphi_f(t,\cdot))^*)''(0)=((\psi_f(t,\cdot))^{-1})'(0).
        \end{equation}         
        Equality \eqref{IFT} implies 
        \begin{equation*}
            ((\psi_f(t,\cdot))^{-1})'(0)=\frac{1}{(\psi_f(t,\cdot))'(f(t))},
        \end{equation*}
        and this, together with \eqref{SC4} and \eqref{53}, gives $$-\alpha p\de(t)((\varphi_f(t,\cdot))^*)''(0)=-\alpha Av\de(t).$$ 
        From the argument above, one can also notice the equivalence between the two formulations for the source condition, namely \eqref{SC5} and \eqref{SC4}.
        
        According to \eqref{derivord3}, the last term of the Taylor expansion becomes
        \begin{equation*}
            ((\varphi_f(t,\cdot))^*)'''(\theta(t)) = -\frac{(\varphi_f(t,\cdot))'''(\nu)}{((\varphi_f(t,\cdot))''(\nu))^3},
        \end{equation*}
        where $\nu:=((\varphi_f(t,\cdot))')^{-1}(\theta(t))$. When $\alpha$ approaches zero, $\theta(t)$ is arbitrarily close to the origin, and since $(\varphi_f(t,\cdot))')^{-1}$ is differentiable, one gets, in particular, that it is continuous, which shows that $\nu$ gets arbitrarily close to $f(t)$.
        Consequently, \ref{a3} implies that $\vert((\varphi_f(t,\cdot))^*)'''(\theta(t))\vert\leq\Tilde{C}$.

        From all the above,
        \begin{align*}
            \langle\zeta_{\alpha}^{\dagger},p_{\alpha}-p\de\rangle&=\int\limits_{\Gamma} f(t)(p_{\alpha}(t)-p\de(t))dt -\alpha \int\limits_{\Gamma} Av\de(t)(p_{\alpha}(t)-p\de(t))dt \\&+ \frac{1}{2}\alpha^2\int\limits_{\Gamma}(p\de(t))^2((\varphi_f(t,\cdot))^*)'''(\theta(t))(p_{\alpha}(t)-p\de(t))dt.
        \end{align*}

        As in the proof of Theorem \ref{mainth}, the last term can be majorized 
        as follows 
        \begin{align*}
            &\frac{1}{2}\alpha^2\int\limits_{\Gamma}(p\de(t))^2((\varphi_f(t,\cdot))^*)'''(\theta(t))(p_{\alpha}(t)-p\de(t))dt \leq \frac{\alpha^{3}}{8C}\Tilde{C}^2\Vert(p\de)^2\Vert^2 + \alpha C\Vert p_{\alpha}-p\de\Vert^2.
        \end{align*}

     From \eqref{ineqbd}, we have
        \begin{align*}
            \langle\zeta_{\alpha}^{\dagger},p_{\alpha}-p\de\rangle-\frac{1}{\alpha} D_{H^*_f}^s(-\alpha p_\alpha,-\alpha p\de) &\leq \int\limits_{\Gamma} f(t)(p_{\alpha}(t)-p\de(t))dt -\alpha \int\limits_{\Gamma} Av\de(t)(p_{\alpha}(t)-p\de(t))dt \\
           &+ \frac{\alpha^{3}}{8C}\Tilde{C}^2\Vert(p\de)^2\Vert^2 + C\alpha\Vert p_{\alpha}-p\de\Vert^2 -C\alpha\Vert p_{\alpha}-p\de\Vert^2.
        \end{align*}
        
        Therefore, by employing Theorem \ref{bregmanestim} we get the conclusion.
    \end{proof}
    \begin{remark}
        In the more general case when condition \eqref{ineqbd} from Theorem \ref{generalcase} is replaced by 
        \begin{equation*}
            \exists\; C>0, q>1 \text{ such that }  \forall\;\alpha>0 \text{ sufficiently small, }  D_{H_f^*}^s(-\alpha p_{\alpha},-\alpha p\de)\geq C\alpha^q\Vert p_{\alpha}-p\de\Vert^q,
        \end{equation*}
        one gets the following estimate,
        \begin{equation*}
        D_R(u_{\alpha},u\de)\leq D_R(u\de-\alpha v\de, u\de)+\alpha^{\frac{q+1}{q-1}}\cdot\frac{q-1}{q}\cdot\frac{1}{C^{\frac{1}{q-1}}q^{\frac{1}{q-1}}2^{\frac{q}{q-1}}}\cdot\Tilde{C}^{\frac{q}{q-1}}\Vert (p\de)^2\Vert^{\frac{q}{q-1}}.    
        \end{equation*}
        
    \end{remark}
    \begin{remark}
        We will show now that source condition \eqref{SC4} reduces to the previously known formulation in the case when the data fidelity is the Kullback-Leibler divergence. Moreover, we present formally the source condition  in the case of the Itakura-Saito distance, which is frequently employed in variational models for reconstructing images corrupted by noise following a gamma distribution (see papers \cite{aubert2008variational}, \cite{darbon2022hamilton} and \cite{barnett2023multiscale}). 
        \begin{enumerate}
            \item Consider the context of Section \ref{KLfidelity}. The proof of Theorem \ref{mainth} implies that \eqref{gradient} holds, $\varphi_f$ given by \eqref{phi} is three times differentiable with respect to the second variable on $(0,+\infty)$, and for almost any $t\in\Gamma$ fixed, $(\varphi_f(t,\cdot))'(f(t))=1-\frac{f(t)}{f(t)}=0$, and $(\varphi_f(t,\cdot))''(f(t))=\frac{f(t)}{(f(t))^2}=\frac{1}{f(t)}$. Additionally, $\frac{(\varphi_f(t,\cdot))'''}{((\varphi_f(t,\cdot))'')^3}(\tau)=-\frac{2\tau^3}{(f(t))^2}$ is bounded on a neighborhood of $f(t)$, whenever $f$ is a bounded function, and \eqref{ineqbd} was shown to hold in \eqref{inegsym}.
            Assumption \eqref{SC4} becomes 
            \begin{gather*}
                \exists\; p\de\in(L^{\infty}(\Gamma))^* \text{ single-valued and bounded a.e.}, \exists\; v\de\in L^1(\Omega) \text{ such that }\\ A^*p\de\in\partial R(u\de) \text{ and } fp\de=Av\de \text{ a.e.},
            \end{gather*}
            which coincides with \eqref{SC3}.
            \item  If $Y=L^{\infty}(\Gamma)$ and $H_f(z)$ is the Itakura-Saito distance, i.e., $$H_f(z)=\int\limits_{\Gamma}\left( \operatorname{ln}\frac{z(t)}{f(t)}+\frac{f(t)}{z(t)}-1 \right)dt, $$ then $\varphi_f(t,\tau)=\operatorname{ln}\frac{\tau}{f(t)}+\frac{f(t)}{\tau}-1$. Since the mapping $H_f$ is not globally convex, let us assume that $\tau\in(0,2\cdot\inf\{f(t) \vert t\in\Gamma\})$, as in \cite{aubert2008variational}, so that $\tau\mapsto\varphi_f(t,\tau)$ is convex. For almost any $t\in\Gamma$, $\varphi_f(t,\cdot)$ is three times differentiable, satisfying   $(\varphi_f(t,\cdot))'(f(t))=0, (\varphi_f(t,\cdot))''(f(t))\neq 0$ and
            \begin{align*}
                \frac{(\varphi_f(t,\cdot))'''}{((\varphi_f(t,\cdot))'')^3}(\tau)=\frac{2\tau^5(\tau-3f(t))}{(2f(t)-\tau)^3},
            \end{align*}
            where the last expression is bounded for $\tau$ in a neighborhood of $f(t)$.
            In order to check \eqref{gradient} and \eqref{ineqbd}, additional assumptions are needed.
             In this situation, \eqref{SC4} translates to 
        \begin{gather*}
            \exists\; p\de\in Y^* \text{ single-valued and bounded a.e.}, \exists\; v\de\in X \text{ such that } A^*p\de\in\partial R(u\de) \text{ and }\\  f^2p\de=Av\de \text{ a.e.}
        \end{gather*}
        \end{enumerate}
    \end{remark}
    
    The following result provides the variational formulation corresponding to the source condition \eqref{SC4}.

        \begin{theo}
        Assume that for almost any $t\in\Gamma$, assumptions \ref{a1}, \ref{a2}, \ref{a3} and \eqref{ineqbd} from Theorem \ref{generalcase} hold.
        Additionally, suppose that there exists $p\de\in Y^*$ single-valued and bounded almost everywhere, such that $A^*p\de\in\partial R(u\de)$ and a function $\Phi:[0,+\infty)\rightarrow[0,+\infty)$ strictly increasing, continuous and concave such that $\Phi(0)=0$ and 
        \begin{equation*}
            \forall \; p\in Y^*,\quad \left\langle \frac{p\de}{\varphi_f''(\cdot,f(\cdot))},p\de-p\right\rangle \leq \Phi(D_{R^*}(A^*p,A^*p\de)),
        \end{equation*}
        where the derivative is taken with respect to the second variable of $\varphi_f$.
        Then one gets the following estimate, $$D_R(u_{\alpha},u\de) \leq 2\Psi(\alpha)+\Phi^{-1}\left(\alpha^2\overline{C}\right),$$ where $\Psi$ is the conjugate of the convex mapping $t\mapsto\Phi^{-1}(t)$ and $\overline{C}$ is a constant.
    \end{theo}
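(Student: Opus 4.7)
The plan is to mirror the strategy of the variational source condition proof carried out in Subsection \ref{KL_VSC}, replacing the KL-specific quantities with their counterparts arising from the Taylor expansion established in the proof of Theorem \ref{generalcase}. I would start from identity \eqref{egalitateBregman} and substitute \eqref{gradient} together with the Taylor expansion of $s\mapsto ((\varphi_f(t,\cdot))^*)'(s)$ around zero. The constant term $f(t)$ cancels against $\langle f,p\de-p_\alpha\rangle$, the first-order term is exactly $-\alpha\langle p\de/\varphi_f''(\cdot,f(\cdot)),p_\alpha-p\de\rangle$ via the identification $((\varphi_f(t,\cdot))^*)''(0)=1/\varphi_f''(t,f(t))$ obtained through the Implicit Function Theorem in Theorem \ref{generalcase}, and the second-order remainder carries the factor $((\varphi_f(t,\cdot))^*)'''(\theta(t))$, which is uniformly bounded by $\tilde{C}$ thanks to assumption \ref{a3}.

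Next, I would control the first-order term using the variational source condition followed by Young's inequality. The duality $D_{R^*}(A^*p_\alpha,A^*p\de)=D_R(u\de,u_\alpha)$, which follows from \eqref{OC1}, \eqref{SC1} and Theorem \ref{lemabenninggen}, together with the identity $\Psi^*(\Phi(t))=t$, gives
\[
\alpha\left\langle\frac{p\de}{\varphi_f''(\cdot,f(\cdot))},p\de-p_\alpha\right\rangle\leq\alpha\Phi(D_{R^*}(A^*p_\alpha,A^*p\de))\leq\Psi(\alpha)+D_R(u\de,u_\alpha).
\]
For the quadratic-in-$\alpha$ remainder, Cauchy-Schwarz combined with $|((\varphi_f(t,\cdot))^*)'''(\theta(t))|\leq\tilde{C}$ and a first Young's inequality, calibrated against the constant $C$ in \eqref{ineqbd}, produces an upper bound of the form $\alpha^3\overline{C}_0+\frac{1}{\alpha}D_{H_f^*}^s(-\alpha p_\alpha,-\alpha p\de)$, where $\overline{C}_0$ involves $\tilde{C}$, $C$ and $\|(p\de)^2\|_2$. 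A second Young's inequality applied to $\alpha\cdot\alpha^2\overline{C}_0$ then yields $\Psi(\alpha)+\Psi^*(\alpha^2\overline{C}_0)$.

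Inserting these estimates into \eqref{egalitateBregman}, the two $\frac{1}{\alpha}D_{H_f^*}^s$ contributions cancel, the term $D_R(u\de,u_\alpha)$ combines with the symmetric distance through $D_R^s(u_\alpha,u\de)=D_R(u_\alpha,u\de)+D_R(u\de,u_\alpha)$, and since $\Phi^{-1}$ is convex and lower semicontinuous as the inverse of a strictly increasing concave function, one has $\Psi^*=\Phi^{-1}$. This delivers the announced estimate $D_R(u_\alpha,u\de)\leq 2\Psi(\alpha)+\Phi^{-1}(\alpha^2\overline{C})$ with $\overline{C}=\overline{C}_0$.

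The main obstacle, as in the KL case, will be the careful bookkeeping of constants across the two Young inequalities: the coefficient of $\|p_\alpha-p\de\|_2^2$ emerging from the quadratic remainder must exactly match the lower bound supplied by \eqref{ineqbd}, so that the $D_{H_f^*}^s$ cancellation actually goes through and a single clean constant $\overline{C}$ can be isolated. One must also verify that \eqref{gradient} is available for the chosen subgradient $\zeta_\alpha^\dagger$ and that, for $\alpha$ small enough, the point $((\varphi_f(t,\cdot))')^{-1}(\theta(t))$ lies in the neighborhood $U(t)$ of $f(t)$ from \ref{a3} for almost every $t$, which is where the uniform bound on the third derivative is used.
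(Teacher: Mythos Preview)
Your proposal is correct and follows exactly the route indicated by the paper, which simply states that the proof proceeds as in the Kullback--Leibler case of Subsection~\ref{KL_VSC}: Taylor-expand $((\varphi_f(t,\cdot))^*)'$ at zero using the machinery of Theorem~\ref{generalcase}, handle the first-order term via the variational source condition plus Young's inequality, and absorb the second-order remainder against $\frac{1}{\alpha}D_{H_f^*}^s$ via \eqref{ineqbd} before applying Young once more to produce the $\Phi^{-1}(\alpha^2\overline{C})$ contribution. The technical caveats you flag (constant matching for the cancellation, availability of \eqref{gradient}, and that $((\varphi_f(t,\cdot))')^{-1}(\theta(t))$ falls in the neighbourhood $U(t)$ for small $\alpha$) are precisely the points one must check, and they are handled just as in Theorem~\ref{generalcase}.
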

    The proof follows similarly as in the Kullback-Leibler context.

    One can also adapt Theorem \ref{connection}  to find a connection between the variational assumption from the previous theorem and the range of $A^{**}$.
    \begin{theo}
        Assume that source condition \eqref{SC1} holds. Let  $\Phi:[0,+\infty)\rightarrow[0,+\infty)$ be a strictly increasing, continuous, concave function, such that $\Phi(0)=0$ and 
        \begin{equation*}
            \forall \; p\in Y^*,\quad \left\langle\frac{p\de}{\varphi_f''(\cdot,f(\cdot))},p\de-p\right\rangle \leq \Phi(D_{R^*}(A^*p,A^*p\de)),
        \end{equation*}
        where the derivative is taken with respect to the second variable of $\varphi_f$.
        If  $R^*$ is twice Fr\'{e}chet differentiable at $A^*p\de$ and $\Phi(t)\sim t^{\frac{1}{2}}$ as $t\rightarrow0$, then $\displaystyle \frac{p\de}{\varphi_f''(\cdot,f(\cdot))}\in\operatorname{Ran}(A^{**})$ holds.
    \end{theo}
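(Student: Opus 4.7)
The plan is to mirror the proof of Theorem~\ref{connection}, with the function $fp\de$ there replaced by $q:=p\de/\varphi_f''(\cdot,f(\cdot))$. Since $A^*p\de\in\partial R(u\de)$ and $R^*$ is twice Fr\'{e}chet differentiable at $A^*p\de$, Theorem~\ref{lemabenninggen} gives $(R^*)'(A^*p\de)=u\de$, so a second-order Taylor expansion of $R^*$ around $A^*p\de$ yields
\begin{equation*}
D_{R^*}(A^*p,A^*p\de)=(R^*)''(A^*p\de)(A^*(p-p\de),A^*(p-p\de))+o(\Vert A^*(p-p\de)\Vert^2).
\end{equation*}
Bounding the quadratic form by $\Vert(R^*)''(A^*p\de)\Vert\Vert A^*(p-p\de)\Vert^2$, setting $\tilde p:=p\de-p$, and combining with the monotonicity of $\Phi$ and the variational hypothesis, I would obtain
\begin{equation*}
\langle q,\tilde p\rangle\leq\Phi\bigl(\Vert(R^*)''(A^*p\de)\Vert\Vert A^*\tilde p\Vert^2+o(\Vert A^*\tilde p\Vert^2)\bigr).
\end{equation*}

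Next, I would exploit the asymptotic $\Phi(t)\sim t^{1/2}$ as $t\to 0$ to extract a constant $\tilde C>0$ for which $\langle q,\tilde p\rangle\leq\tilde C\Vert A^*\tilde p\Vert$ holds whenever $\Vert A^*\tilde p\Vert$ is sufficiently small. To lift this local bound to arbitrary $p\in Y^*$, the estimate can be applied to $\varepsilon\tilde p$ for $\varepsilon>0$ small enough to meet the smallness restriction; dividing by $\varepsilon$ and using the positive homogeneity of both sides in $\tilde p$ removes the restriction. Replacing $\tilde p$ by $-\tilde p$ then upgrades the inequality to the two-sided bound $|\langle q,\tilde p\rangle|\leq\tilde C\Vert A^*\tilde p\Vert$ valid on all of $Y^*$.

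To conclude, I would invoke Lemma~8.21 in \cite{Scherzer08}, which identifies continuous linear functionals on $Y^*$ dominated by $\Vert A^*\cdot\Vert$ with elements of $\operatorname{Ran}(A^{**})$; this yields $q\in\operatorname{Ran}(A^{**})$ as claimed. The main obstacle will be to verify that the pairing $\langle q,\cdot\rangle$ actually defines a genuine element of $Y^{**}$, so that Lemma~8.21 is applicable: this requires $p\de$ to be bounded almost everywhere (supplied by the assumption in \eqref{SC5} that $p\de$ is single-valued and bounded) together with measurability and local boundedness of $1/\varphi_f''(\cdot,f(\cdot))$ on the essential support of $p\de$, which should follow from the regularity properties of $\varphi_f$ near $f$ assumed in \ref{a1}--\ref{a2}.
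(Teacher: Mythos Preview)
Your proposal is correct and follows essentially the same route as the paper, which does not give an explicit proof but simply indicates that one should adapt Theorem~\ref{connection} with $fp\de$ replaced by $p\de/\varphi_f''(\cdot,f(\cdot))$; your Taylor expansion of $R^*$, use of the asymptotic $\Phi(t)\sim t^{1/2}$, homogeneity rescaling, and appeal to Lemma~8.21 in \cite{Scherzer08} reproduce that argument. Your closing caveat about $q$ defining an element of $Y^{**}$ is a reasonable observation, but note that the variational hypothesis already presupposes the pairing $\langle q,\cdot\rangle$ is meaningful on $Y^*$, so this is implicitly part of the assumptions rather than a genuine obstacle.
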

\section*{Acknowledgements}
    The authors would like to express their gratitude to Barbara Kaltenbacher and Tobias Wolf from the University of Klagenfurt, Austria, for their suggestions and constructive feedback. The support  by the Austrian Science Fund (FWF) DOC 78 is acknowledged. The useful and constructive comments of the referees have helped to greatly improve the manuscript.

\printbibliography
\end{document}